\newcommand{\N}{\mathbb{N}}
\newcommand{\R}{\mathbb{R}}
\newcommand{\rn}{\R^n}
\newcommand{\M}{\mathcal M}
\newcommand{\Mpl}{\M_+}
\renewcommand{\d}{{\fam0 d}}
\newcommand{\vr}{\varrho}
\DeclareMathOperator*{\esssup}{ess\,sup}
\DeclareMathOperator{\Tr}{T_{\nu}}
\theoremstyle{plain}
\newtheorem{theorem}{Theorem}[section]
\newtheorem{proposition}[theorem]{Proposition}
\theoremstyle{definition}
\newtheorem{remark}[theorem]{Remark}
\newtheorem{question}{Question}
\numberwithin{equation}{section}
\def\paragraph{\bigskip\@startsection{paragraph}{4}%
  \z@\z@{-\fontdimen2\font}%
  {\normalfont\bfseries}}
\begin{document}

\title{Compactness of Sobolev embeddings and decay of norms}
\author{Jan Lang, Zden\v ek Mihula and Lubo\v s Pick}

\address{Jan Lang, Department of Mathematics,
The Ohio State University,
231 West 18th Avenue,
Columbus, OH 43210-1174}
\email{lang.162@osu.edu}
\urladdr{0000-0003-1582-7273}

\address{Zden\v{e}k Mihula, Department of Mathematical Analysis, Faculty of Mathematics and
Physics, Charles University, Sokolovsk\'a~83,
186~75 Praha~8, Czech Republic
--AND-- Department of Mathematics, Faculty of Electric Engineering, Czech Technical University in Prague, Technick\'a~2,
166 27 Prague~6, Czech Republic}
\email{mihulaz@karlin.mff.cuni.cz}
\email{mihulzde@fel.cvut.cz}
\urladdr{0000-0001-6962-7635}

\address{Lubo\v s Pick, Department of Mathematical Analysis, Faculty of Mathematics and
Physics, Charles University, Sokolovsk\'a~83,
186~75 Praha~8, Czech Republic}
\email{pick@karlin.mff.cuni.cz}
\urladdr{0000-0002-3584-1454}

\subjclass[2020]{46E30, 46E35}
\keywords{compactness, Sobolev embeddings, Ahlfors measures, rearrangement-invariant spaces, optimal range spaces}

\thanks{This research was partly supported by the grant P201-18-00580S of the Czech Science Foundation; by the Charles University, project GA UK No.~1056119; by Charles University Research program No.~UNCE/SCI/023; by the project OPVVV CAAS CZ.02.1.01/0.0/0.0/16\_019/0000778.}

\begin{abstract}
We investigate the relationship between the compactness of embeddings of Sobolev spaces built upon rearrangement-invariant spaces into rearrangement-invariant spaces endowed with $d$-Ahlfors measures under certain restriction on the speed of its decay on balls. We show that the gateway to compactness of such embeddings, while formally describable by means of optimal embeddings and almost-compact embeddings, is quite elusive. It is known that such a Sobolev embedding is not compact when its target space has the optimal fundamental function. We show that, quite surprisingly, such a target space can actually be ``fundamentally enlarged'', and yet the resulting embedding remains noncompact. In order to do that, we develop two different approaches. One is based on enlarging the optimal target space itself, and the other is based on enlarging the Marcinkiewicz space corresponding to the optimal fundamental function.
\end{abstract}

\date{\today}

\maketitle

\setcitestyle{numbers}
\bibliographystyle{plainnat}

\section{Introduction}

Compact embeddings of function spaces containing weakly differentiable functions defined on subdomains of an Euclidean space into other function spaces constitute an important technique that is widely applicable when solutions to partial differential equations are sought by functional-analytic or variational methods. Such embeddings are particularly handy for showing the discreteness of the spectra of linear elliptic partial differential operators defined on bounded domains.

The most classical result on the compactness of a Sobolev embedding is the Rellich--Kondrachov theorem, which originated in a lemma of Rellich~\cite{Re:30} and was proved specifically for Sobolev spaces by
Kondrachov~\cite{Ko:45}. It is often used in the form stating that, given $n\in\N$, $n\geq2$ (we shall assume this implicitly throughout the paper), $p\in[1,n]$ and a bounded Lipschitz domain $\Omega\subseteq\rn$, the Sobolev space $W^{1,p}(\Omega)$ is compactly embedded into the Lebesgue space $L^{q}(\Omega)$ for any $q\in[1,\frac{np}{n-p})$ (the fraction $\frac{np}{n-p}$ is to be interpreted as $\infty$ when $p=n$). Possibly the most natural way of proving the Rellich--Kondrachov theorem is based on the fact that a bounded set in $W^{1,p}(\Omega)$ is equiintegrable in $L^{q}(\Omega)$, that is, given $\varepsilon>0$, there always exists a $\delta>0$ such that for every subset $E$ of $\Omega$ of measure not exceeding $\delta$ one has
\begin{equation*}
    \sup_{\|u\|_{W^{1,p}(\Omega)}\le1} \|u\chi_{E}\|_{L^{q}(\Omega)} < \varepsilon,
\end{equation*}
where $\chi_E$ stands for the characteristic function of $E$. There are several ways to achieve this fact. One of the most successful ones is based on a combination of the (in some sense) optimal Sobolev embedding and a so-called almost-compact embedding between function spaces on the target side of the embedding relation. We shall now describe this technique in more detail.

Roughly speaking, a space $Y(\Omega)$ is entitled to be called the \textit{optimal target space} for a given space $X(\Omega)$ in the Sobolev embedding $W^1X(\Omega)\hookrightarrow Y(\Omega)$ if it cannot be replaced by any essentially smaller space from a specified category of function spaces. A precise specification of the pool of competing spaces is important. For example, if $p\in[1,n)$ and $\Omega$ is a bounded Lipschitz domain in $\R^{n}$, then, in the classical Sobolev embedding
\begin{equation}\label{E:classical-cobolev}
    W^{1,p}(\Omega)\hookrightarrow L^{\frac{np}{n-p}}(\Omega),
\end{equation}
the space $L^{\frac{np}{n-p}}(\Omega)$ is the optimal range partner for $L^{p}(\Omega)$ \textit{in the category of Lebesgue spaces} because it cannot be replaced by any essentially smaller \textit{Lebesgue} space. While the embedding
\begin{equation*}
    L^{\frac{np}{n-p}}(\Omega) \hookrightarrow L^{q}(\Omega)
\end{equation*}
is continuous, it is not compact. The next step is to observe that the embedding, while noncompact, is \textit{almost compact} in the sense that
\begin{equation*}
    \lim_{k\to\infty}\sup_{\|u\|_{L^{\frac{np}{n-p}}(\Omega)}\leq1} \|u\chi_{E_k}\|_{L^{q}(\Omega)} =0
\end{equation*}
for every sequence $\{E_k\}$ of measurable subsets of $\Omega$ satisfying $E_k\searrow\emptyset$ a.e.~in $\Omega$. These observations can be summarized in a chain of embeddings, namely
\begin{equation}\label{E:scheme}
    W^{1,p}(\Omega) \hookrightarrow L^{\frac{np}{n-p}}(\Omega) \stackrel{*}{\hookrightarrow} L^{q}(\Omega),
\end{equation}
where the symbol $\stackrel{*}{\hookrightarrow}$ denotes an almost-compact embedding. Not surprisingly, \eqref{E:scheme} implies that every bounded set in $W^{1,p}(\Omega)$ is equiintegrable in $L^{q}(\Omega)$. The fact that the combination of the two relations in~\eqref{E:scheme} guarantees that
\begin{equation*}
    W^{1,p}(\Omega) \hookrightarrow \hookrightarrow L^{q}(\Omega),
\end{equation*}
where the symbol $\hookrightarrow\hookrightarrow$ denotes a compact embedding, is known. For instance, it is explicitly proved, in a more general setting, in~\cite[Theorem~3.2]{Sla:12}. Interestingly, one obtains the almost-compact embedding
\begin{equation*}
    L^{\frac{np}{n-p}}(\Omega) \stackrel{*}{\hookrightarrow} L^{q}(\Omega)
\end{equation*}
almost for free: suppose that $q\in[1,\frac{np}{n-p})$ and $E_k\searrow \emptyset$ a.e., then we get by H\"older's inequality that, for every function $u$ in the closed unit ball of $L^{\frac{np}{n-p}}(\Omega)$,
\begin{equation}\label{E:fundamental-lebesgue}
    \|u\chi_{E_k}\|_{L^{q}(\Omega)} \le \|u\|_{L^{\frac{np}{n-p}}(\Omega)}\|\chi_{E_k}\|_{L^{\frac{qnp}{np-nq+pq}}(\Omega)} \le |E_k|^{\frac1{q}-\frac1{p}+\frac1{n}}\to 0\quad\text{as $k\to\infty$,}
\end{equation}
where $|E_k|$ denotes the $n$-dimensional Lebesgue measure of $E_k$. To explore the scheme illustrated by~\eqref{E:scheme} any deeper, we need, however, finer classes of function spaces than that of Lebesgue spaces.

Although the classical theory works almost solely with Lebesgue spaces, there are other, more complicated, function spaces that are also of considerable interest. Important generalizations of Lebesgue spaces are
Lorentz spaces and Orlicz spaces. While Lorentz spaces constitute a useful tool for certain fine tuning of Lebesgue spaces, Orlicz spaces have been successfully used when either more rapid or slower than power growth of functions is needed (cf.~\cite{Go:74}, \cite{GiTr:01}, \cite{Ci:96}). Allowing these types of spaces has a considerable impact on the quality of Sobolev embeddings. For example, in~\eqref{E:classical-cobolev}, the target space is optimal as a Lebesgue space, but it is not optimal as a Lorentz space, because it can be replaced by a strictly smaller Lorentz space $L^{\frac{np}{n-p},p}(\Omega)$. The situation is even more interesting when the Sobolev space $W^{1,n}(\Omega)$ is in play, in which the
degree of integrability coincides with the dimension of the underlying Euclidean space. In that case, there does not exist any optimal Lebesgue target space, but there does exist an optimal Orlicz space, namely the celebrated
Zygmund class $\exp L^{n'}(\Omega)$ ($n'=\frac{n}{n-1}$), a result that is nowadays considered classical and that goes back to Trudinger~\cite{Tr:67}, Poho\v{z}aev~\cite{Po:65} and Judovi\v{c}~\cite{Yu:61}.
However, neither Lorentz nor Orlicz spaces hold the key to all answers, because even this space can be improved. It can be replaced by the Lorentz--Zygmund space $L^{\infty,n;-1}(\Omega)$, which is strictly smaller than $\exp L^{n'}(\Omega)$ and which has been surfacing in various contexts and also in various disguises, see, for example, \cite{CP,Ma:11,BW,Ha}. The scale of Lorentz--Zygmund spaces, in some sense a meeting point of Orlicz and Lorentz families of spaces, was introduced in~\cite{BeRu:80} and has been later generalized in numerous ways, e.g.~\cite{OP, EvGoOp:18, EdKePi:00}.


In order to avoid technical difficulties that each individual scale of spaces inevitably brings, it is advisable to make use of a common framework that encompasses all of these function spaces. Experience shows that probably the most suitable one is that of the \textit{rearrangement-invariant spaces} (for precise definitions see \hyperref[sec:prel]{Section~\ref*{sec:prel}}). This category of function spaces is naturally built on the procedure of symmetrization, but it also involves spaces whose original definitions do not rely on rearrangement techniques, such as Lebesgue or Orlicz spaces.

In the framework of rearrangement-invariant spaces, Sobolev embeddings have been studied heavily over the past two decades. The key technique is the so-called \textit{reduction principle}, which enables us to reformulate equivalently a difficult problem involving differential operators and functions of many variables in the form of a question concerning boundedness of an integral operator involving functions acting on an interval. These advances, introduced in~\cite{EdKePi:00} and then further developed in many works, see e.g.~\cite{KePi:06,CiPiSl:15}, paved the way for studying deeper properties of Sobolev embeddings, a pivotal example of which is compactness, see~\cite{KerPi:08,Sla:15,Sla:12,CaMi:19}.

The results of Slav\'{\i}kov\'a~\cite{Sla:12,Sla:15} showed that the two-step method described above in connection with Lebesgue spaces is extendable to the general setting of rearrangement-invariant spaces. There are, however, some pitfalls. In particular, if one can prove an analogue of~\eqref{E:scheme} in the form
\begin{equation*}
    W^{1}X(\Omega) \hookrightarrow Y_X(\Omega) \stackrel{*}{\hookrightarrow} Z(\Omega),
\end{equation*}
in which $X(\Omega)$ and $Z(\Omega)$ are rearrangement-invariant spaces and $Y_X(\Omega)$ is the optimal (the smallest) rearrangement-invariant space $Y(\Omega)$ rendering $W^{1}X(\Omega) \hookrightarrow Y(\Omega)$ true (such a rearrangement-invariant space always exists; we shall comment on that later), then the desired compact embedding
\begin{equation}\label{E:compact-embedding-general}
    W^{1}X(\Omega) \hookrightarrow \hookrightarrow Z(\Omega)
\end{equation}
follows. What is not, however, clear at all is whether one can use an analogue of~\eqref{E:fundamental-lebesgue} to get \eqref{E:compact-embedding-general}. Such an analogue would inevitably call to play the so-called \textit{fundamental function} of a rearrangement-invariant space $X(\Omega)$. Its fundamental function $\varphi_X$ is defined on $[0,|\Omega|)|$ as
\begin{equation*}
    \varphi_X(t)=\|\chi_E\|_{X},\ \text{$t\in[0,|\Omega|)$, where $E\subseteq\Omega$ is any subset satisfying $|E|=t$.}
\end{equation*}
A proper analogue of~\eqref{E:fundamental-lebesgue} would be something like
\begin{equation}\label{E:fundamental-general}
    \lim_{t\to0_+} \frac{\varphi_{Y_X}(t)}{\varphi_Z(t)} =  0,
\end{equation}
but the question is, does \eqref{E:fundamental-general} imply \eqref{E:compact-embedding-general}? We shall see that the situation is much more complicated when we do not restrict ourselves to the class of Lebesgue spaces.

The concept of the fundamental function leads us to the \textit{Marcinkiewicz spaces} $M_{\varphi}(\Omega)$
and the \textit{Lorentz endpoint spaces} $\Lambda_{\varphi}(\Omega)$ (see \hyperref[sec:prel]{Section~\ref*{sec:prel}} for their definitions).
It is known that $M_{\varphi}(\Omega)$ and $\Lambda_{\varphi}(\Omega)$ are the biggest and the smallest, respectively, rearrangement-invariant spaces with the fixed fundamental function $\varphi$.
The embedding
\begin{equation}\label{E:optimal-r.i.-embedding}
    W^{1}X(\Omega)\hookrightarrow Y_X(\Omega)
\end{equation}
is known to never be compact, regardless of the choice of $X(\Omega)$, but what if we replace $Y_X(\Omega)$ with the Marcinkiewicz space $M_{Y_X}(\Omega)$, the largest space having the same fundamental function as $Y_X(\Omega)$? Since $Y_X(\Omega)$ is embedded into $M_{Y_X}(\Omega)$, we plainly have that
\begin{equation}\label{E:optimal-marc-embedding}
    W^{1}X(\Omega)\hookrightarrow M_{Y_X}(\Omega).
\end{equation}
It turns out, however, that, even though \eqref{E:optimal-marc-embedding} has a possibly larger target space (hence the embedding is possibly weaker) than~\eqref{E:optimal-r.i.-embedding}, it is still never compact (we shall comment on that in more detail later). At this stage, we are left with the question, how much larger than $M_{Y_X}(\Omega)$ can a target space $Y(\Omega)$ be in order to guarantee that the embedding $W^1X(\Omega)\hookrightarrow Y(\Omega)$ is still not compact?

All of the observations that we made lead us to three natural questions, which will be formulated soon. However, in order to ensure that our results are applicable to a large number of different Sobolev-type embeddings, which are often considered and studied separately, we actually consider much more general embeddings of Sobolev-type spaces (even of higher orders) into function spaces built upon quite general measure spaces. We say that a finite Borel measure $\nu$ on $\overline{\Omega}$ is a \emph{$d$-Ahlfors measure}, where $d\in(0,n]$, if
\begin{equation*}
\sup\limits_{x\in\rn, r>0}\frac{\nu\left(B_r(x)\cap\overline{\Omega}\right)}{r^d}<\infty,
\end{equation*}
where $B_r(x)$ is the open ball in $\rn$ centered at $x$ with radius $r$, and there is a point $x_0\in\overline{\Omega}$ and $R>0$ such that
\begin{equation*}
\inf\limits_{r\in(0,R]}\frac{\nu\left(B_r(x_0)\cap\overline{\Omega}\right)}{r^d}>0.
\end{equation*}
We shall consider Sobolev-type embeddings having the form $W^mX(\Omega)\hookrightarrow Y(\overline{\Omega}, \nu)$, where $W^mX(\Omega)$ is the $m$-th order, $m\in\N$, Sobolev-type space built upon a rearrangement-invariant space $X(\Omega)$, and $Y(\overline{\Omega}, \nu)$ is a rearrangement-invariant space on $\overline{\Omega}$ endowed with a $d$-Ahlfors measure $\nu$ (for more detail, see
\hyperref[sec:prel]{Section~\ref*{sec:prel}}). Such Sobolev-type embeddings and their compactness were recently studied in \citep{CPS:20, CPS_OrlLor:20, CM:20}. This general setting encompasses, for example, not only the standard Sobolev
embeddings ($\nu=\lambda_n$, the $n$-dimensional Lebesgue measure on $\Omega$) but also Sobolev trace embeddings onto $d$-dimensional submanifolds ($\nu=\mathcal H^d\rvert_{\Omega_d}$, where $\Omega_d$ is a $d$-dimensional
Riemannian submanifold) and boundary trace embeddings ($\nu=\mathcal H^{n-1}\rvert_{\partial\Omega}$) as well as some weighted Sobolev embeddings (e.g.~$\d\nu(x)=|x-z|^{d-n}\,\d x$, where $z\in\overline{\Omega}$ is a fixed
point). Now we can formulate the principal questions that we tackle in this paper.

Let $m<n$ and $d\in[n-m,n]$. Let $X(\Omega)$ be a rearrangement-invariant space such that $X(\Omega)\not\subseteq L^{\frac{n}{m},1}(\Omega)$. Let $Y_X(\overline{\Omega}, \nu)$ be the optimal rearrangement-invariant target space in $W^mX(\Omega)\hookrightarrow Y(\overline{\Omega}, \nu)$ (its existence is guaranteed by \citep[Theorem~4.4]{CPS:20}). It was proved in \citep[Theorem~4.1]{CM:20} (cf.~\citep[Theorem~4.1]{Sla:15}, \citep[Theorem~5.1]{KerPi:08}) that, if $Y(\overline{\Omega}, \nu)\neq L^\infty(\overline{\Omega}, \nu)$, then $W^mX(\Omega)\hookrightarrow Y(\overline{\Omega}, \nu)$ is compact if and only if $Y_X(\overline{\Omega}, \nu)\overset{*}{\hookrightarrow} Y(\overline{\Omega}, \nu)$. It follows from this characterization combined with \eqref{prel:almostcompembnecessary} that the optimal embedding $W^mX(\Omega)\hookrightarrow Y_X(\overline{\Omega}, \nu)$ is never compact. Actually, since $Y_X(\overline{\Omega}, \nu)$ and $M_{Y_X}(\overline{\Omega}, \nu)$ have the same fundamental function, even the embedding $W^mX(\Omega)\hookrightarrow M_{Y_X}(\overline{\Omega}, \nu)$ is never compact. While $M_{Y_X}(\overline{\Omega}, \nu)$ is the biggest rearrangement-invariant space on the same fundamental scale as $Y_X(\overline{\Omega}, \nu)$, it is not the biggest rearrangement-invariant target space that renders the embedding noncompact. Surprising as it may appear, there is, in general, no such a space. We shall show that the set of the target spaces that renders a Sobolev embedding noncompact has, roughly speaking, no biggest element. The remarks made in this paragraph lead us to two possible ideas of how to construct bigger, noncompact target spaces.

The first idea is to enlarge the Marcinkiewicz space $M_{Y_X}(\overline{\Omega}, \nu)$ to a space $Y(\overline{\Omega}, \nu)$ in such a way that the embedding $W^mX(\Omega)\hookrightarrow Y(\overline{\Omega}, \nu)$ is still not compact.
\begin{question}\label{q:noncompactMarc}
Is there a rearrangement-invariant space $Y(\overline{\Omega}, \nu)$ with the following properties:

\begin{itemize}
  \item $W^mX(\Omega)\hookrightarrow Y(\overline{\Omega}, \nu)$ non-compactly,
  \item $M_{Y_X}(\overline{\Omega}, \nu)\subsetneq Y(\overline{\Omega}, \nu)$, where $M_{Y_X}(\overline{\Omega}, \nu)$ is the Marcinkiewicz space corresponding to $Y_X(\overline{\Omega}, \nu)$?
\end{itemize}
\end{question}

We shall give a comprehensive answer to \hyperref[q:noncompactMarc]{Question~\ref*{q:noncompactMarc}}, in which we make substantial use of a construction that ensures that, given a Marcinkiewicz space $M_\varphi$, we can construct a Marcinkiewicz space $M_\psi$ such that $M_\varphi\subsetneq M_\psi$ having the crucial property that $M_\varphi$ is not almost-compactly embedded into $M_\psi$. The construction does not, however, guarantee that $M_\psi$ is ``fundamentally bigger'' than $M_\varphi$, that is, $\lim\limits_{t\to0^+}\frac{\psi(t)}{\varphi(t)}=0$. In fact, the limit does not exist at all. It is thus natural to search for a ``fundamentally bigger'' space in the next step. This leads us to a new problem, closely related to \hyperref[q:noncompactMarc]{Question~\ref*{q:noncompactMarc}} but more difficult.

\begin{question}\label{q:noncompactfundamentallybigger1}
Is there a rearrangement-invariant space $Y(\overline{\Omega}, \nu)$ with the following properties:

\begin{itemize}
  \item $W^mX(\Omega)\hookrightarrow Y(\overline{\Omega}, \nu)$ non-compactly,
  \item $M_{Y_X}(\overline{\Omega}, \nu)\subsetneq Y(\overline{\Omega}, \nu)$,
	\item $\lim\limits_{t\to0^+}\frac{\varphi_Y(t)}{\varphi_{Y_X}(t)}=0$?
\end{itemize}
\end{question}
The difference between \hyperref[q:noncompactMarc]{Question~\ref*{q:noncompactMarc}} and \hyperref[q:noncompactfundamentallybigger1]{Question~\ref*{q:noncompactfundamentallybigger1}} is that, in the latter, the space $Y(\overline{\Omega}, \nu)$ is required to be ``fundamentally bigger'' than $Y_X(\overline{\Omega}, \nu)$. We deal with the first two questions in \hyperref[sec:q1q2]{Section~\ref*{sec:q1q2}}.

The other approach is to enlarge the optimal space $Y_X(\overline{\Omega}, \nu)$ to a space $Y(\overline{\Omega}, \nu)$ in such a way that the embedding $W^mX(\Omega)\hookrightarrow Y(\overline{\Omega}, \nu)$ is still not compact. This leads to the following question, which we deal with in \hyperref[prel:q3]{Section~\ref*{prel:q3}}.

\begin{question}\label{q:noncompactfundamentallybigger2}
Is there a rearrangement-invariant space $Y(\overline{\Omega}, \nu)$ with the following properties:

\begin{itemize}
  \item $W^mX(\Omega)\hookrightarrow Y(\overline{\Omega}, \nu)$ non-compactly,
  \item $Y_X(\overline{\Omega}, \nu)\subsetneq Y(\overline{\Omega}, \nu)$,
	\item $\lim\limits_{t\to0^+}\frac{\varphi_Y(t)}{\varphi_{Y_X}(t)}=0$?
\end{itemize}
\end{question}

On the one hand, since we no longer require that $Y(\overline{\Omega}, \nu)$ contains the Marcinkiewicz space $M_{Y_X}(\overline{\Omega}, \nu)$, such a space $Y(\overline{\Omega}, \nu)$ can be closer to the optimal space $Y_X(\overline{\Omega}, \nu)$; thus it may appear that this approach leads to weaker results. On the other hand, by dropping the requirement that $Y(\overline{\Omega}, \nu)$ contains the Marcinkiewicz space $M_{Y_X}(\overline{\Omega}, \nu)$, we lose a great deal of information; thus the second approach brings in a lot of technical complications, which we do not face when following the first idea. Furthermore, the results in \hyperref[prel:q3]{Section~\ref*{prel:q3}} are usable in situations not covered by the results in \hyperref[sec:q1q2]{Section~\ref*{sec:q1q2}}. Therefore, \hyperref[sec:q1q2]{Section~\ref*{sec:q1q2}} and \hyperref[prel:q3]{Section~\ref*{prel:q3}} complement each other rather than one extending the other. While pursuing the second approach, we also obtain a useful result of independent interest, namely \hyperref[prop:optimalpair]{Proposition~\ref*{prop:optimalpair}}, which characterizes when the spaces in a Sobolev embedding $W^mX(\Omega)\hookrightarrow Y(\overline{\Omega}, \nu)$ are mutually optimal.

Before we start searching for answers, we provide some comments on the restrictions on the parameters $d,m,n$ and on the space $X(\Omega)$. First, when $m\geq n$, the rearrangement-invariant setting is not well suited to capturing fine details of corresponding Sobolev embeddings, because the space $W^{m}X(\Omega)$ is embedded into $L^{\infty}(\overline{\Omega}, \nu)$, the smallest rearrangement-invariant space over $(\overline{\Omega}, \nu)$, no matter what $X(\Omega)$ is and $L^{\infty}(\overline{\Omega}, \nu)$ is almost-compactly embedded into any rearrangement-invariant space over $(\overline{\Omega}, \nu)$ that is different from $L^{\infty}(\overline{\Omega}, \nu)$ (\citep[Theorem~5.2]{Sla:12}). In this case, a more suitable class of potential target spaces consists of various function spaces measuring smoothness and/or oscillation (rather than size). Such research, while of great interest, goes beyond the scope of this paper.

Next, the assumption $X(\Omega)\not\subseteq L^{\frac{n}{m},1}(\Omega)$ is actually completely natural. Indeed, if $X(\Omega)\subseteq L^{\frac{n}{m},1}(\Omega)$, then $Y_X(\overline{\Omega}, \nu)=L^\infty(\overline{\Omega}, \nu)$ (\citep[Theorem~3.1]{CPS_OrlLor:20}). Hence $W^mX(\Omega)\hookrightarrow Y(\overline{\Omega}, \nu)$ is always compact whenever $Y(\overline{\Omega}, \nu)\neq L^\infty(\overline{\Omega}, \nu)$ thanks to \citep[Theorem~4.1]{CM:20} combined with \citep[Theorem~5.2]{Sla:12}. Therefore, the questions are of no interest if $X(\Omega)\subseteq L^{\frac{n}{m},1}(\Omega)$.

Finally, the assumption $d\ge n-m$ is technical in nature. It goes back to~\cite{CPS:20,CPS_OrlLor:20}, where it was discovered that a balance condition between $d,m$ and $n$ constitutes a threshold that divides Sobolev embeddings with respect to $d$-Ahlfors measures into two completely different groups. It turns out that the so-called fast-decaying measures (that is, those having $d\ge n-m$) behave naturally, while their counterparts, the slowly-decaying measures ($d<n-m$), bring some rather unexpected technical anomalies, which we prefer to avoid here.


We collect the background material used in this paper and fix the notation in \hyperref[sec:prel]{Section~\ref*{sec:prel}}, which is quite lengthy because we prefer this paper to be as self-contained as possible. Readers familiar with rearrangement-invariant spaces might want to skim over the section.

\section{Background material}\label{sec:prel}
Throughout the paper, the relations $\lq\lq \lesssim "$ and $\lq\lq \gtrsim"$ between two positive expressions mean that the former is bounded by the latter and vice versa, respectively, up to a multiplicative constant independent of all important quantities in question. When both these relations hold at the same time (with possibly different constants), we write $\lq\lq \approx "$.

Throughout this section, let $(R, \mu)$ be a $\sigma$-finite, nonatomic measure space.
We set
\begin{align*}
\M(R, \mu)&= \{f: \text{$f$ is a $\mu$-measurable function on $R$ with values in $[-\infty,\infty]$}\}\\
\intertext{and}
\Mpl(R, \mu)&= \{f \in \M(R, \mu)\colon f \geq 0\}.
\end{align*}
The \emph{nonincreasing rearrangement} $f^* \colon  [0,\infty) \to [0, \infty ]$ of a function $f\in \M(R, \mu)$  is
defined as
\begin{equation*}
f^*(t)=\inf\{\lambda\in(0,\infty)\colon\mu\left(\{x\in R\colon|f(x)|>\lambda\}\right)\leq t\},\ t\in[0,\infty).
\end{equation*}
The \emph{maximal nonincreasing rearrangement} $f^{**} \colon (0,\infty) \to [0, \infty ]$ of a function $f\in \M(R, \mu)$  is
defined as
\begin{equation*}
f^{**}(t)=\frac1t\int_0^ t f^{*}(s)\,\d s,\ t\in(0,\infty).
\end{equation*}
If there is any possibility of misinterpretation, we use the more explicit notations $f^*_\mu$ and $f^{**}_\mu$ instead of $f^*$ and $f^{**}$, respectively, to stress what measure the rearrangements are taken with respect to.

If $|f|\leq |g|$ $\mu$-a.e.\ in $R$, then $f^*\leq g^*$. The operation $f\mapsto f^*$ is neither subadditive nor multiplicative. The lack of subadditivity of the operation
of taking the nonincreasing rearrangement is, up to some extent, compensated by the following fact (\cite[Chapter~2,~(3.10)]{BS}): for every
$t\in(0,\infty)$ and every $f,g\in\mathcal M(R, \mu)$, we have that
\begin{equation*}
\int_0^ t(f +g)^{*}(s)\,\d s\leq\int_0^ tf^{*}(s)\,\d s + \int_0^ tg^{*}(s)\,\d s.
\end{equation*}
This inequality can be also written in the form
\begin{equation*}
(f+g)^{**}\leq f^{**}+g^{**}.
\end{equation*}
A fundamental result in the theory of Banach function spaces is the \emph{Hardy lemma} (\citep[Chapter~2, Proposition~3.6]{BS}), which states that, if two nonnegative measurable functions $f,g$ on $(0,\infty)$ satisfy
\begin{align}
\int_0^{t}f(s)\,\d s&\leq \int_0^ tg(s)\,\d s\nonumber
\intertext{for all $t\in(0,\infty)$, then, for every nonnegative, nonincreasing function $h$ on $(0,\infty)$, one has}
\int_0^{\infty}f(s)h(s)\,\d s&\leq \int_0^ {\infty}g(s)h(s)\,\d s.\label{prel:hardy-lemma}
\end{align}

An important fact concerning rearrangements is the \emph{Hardy-Littlewood inequality} (\citep[Chapter~2, Theorem~2.2]{BS}), which asserts that, if $f, g \in\M(R, \mu)$,
then
\begin{equation}\label{prel:HL}
\int _R |fg| \,\d\mu \leq \int _0^{\infty} f^*(t) g^*(t)\,\d t.
\end{equation}

If $(R, \mu)$ and $(S, \nu)$ are two (possibly different) $\sigma$-finite measure spaces, we say that functions $f\in \M(R, \mu)$ and $g\in\M(S,\nu)$ are \emph{equimeasurable}, and we write $f\sim g$, if $f^*=g^*$ on $(0,\infty)$. Note that $f$ and $f^*$ are equimeasurable.

A functional $\vr\colon  \Mpl (R, \mu) \to [0,\infty]$ is called a \emph{Banach function norm} if, for all $f$, $g$ and $\{f_j\}_{j\in\N}$ in $\M_+(R, \mu)$, and every $\lambda \geq0$, the following properties hold:
\begin{enumerate}[(P1)]
\item $\vr(f)=0$ if and only if $f=0$;
$\vr(\lambda f)= \lambda\vr(f)$; $\vr(f+g)\leq \vr(f)+ \vr(g)$;
\item $  f \le g$ a.e.\  implies $\vr(f)\le\vr(g)$;
\item $  f_j \nearrow f$ a.e.\ implies
$\vr(f_j) \nearrow \vr(f)$;
\item $\vr(\chi_E)<\infty$ \ for every $E\subseteq R$ of finite measure;
\item  if $E\subseteq R$ is of finite measure, then $\int_{E} f\,\d\mu \le C_E
\vr(f)$, where $C_E$ is a positive constant possibly depending on $E$ and $\vr$ but not on $f$.
\end{enumerate}
If, in addition, $\vr$ satisfies
\begin{itemize}
\item[(P6)] $\vr(f) = \vr(g)$ whenever
$f \sim g$,
\end{itemize}
then we say that $\vr$ is a
\emph{rearrangement-invariant (Banach) function norm}.

If $\vr$ is a rearrangement-invariant function norm, then the set
\begin{equation*}
X=X({\vr})=\{f\in\M(R, \mu)\colon \vr(|f|)<\infty\}
\end{equation*}
equipped with the norm $\|f\|_X=\vr(|f|)$, $f\in X$, is called a~\emph{rearrangement-invariant space} (corresponding to the rearrangement-invariant function norm $\vr$). We also sometimes write $X(R, \mu)$ to stress the underlying measure space. Note that the quantity $\|f\|_{X}$ is actually well defined for every $f\in\M(R, \mu)$ and
\begin{equation*}
f\in X\quad\Leftrightarrow\quad\|f\|_X<\infty.
\end{equation*}

With any rearrangement-invariant function norm $\vr$, there is associated another functional, $\vr'$, defined for $g \in  \Mpl(R, \mu)$ as
\begin{equation*}
\vr'(g)=\sup\left\{\int_{R} fg\,\d\mu\colon f\in\Mpl(R, \mu),\ \vr(f)\leq 1\right\}.
\end{equation*}
It turns out that $\vr'$ is also a~rearrangement-invariant function norm, which is called the~\emph{associate norm} of $\vr$. Moreover, for every rearrangement-invariant function norm $\vr$ and every $f\in\Mpl(R, \mu)$, we have (see~\citep[Chapter~1, Theorem~2.9]{BS}) that
\begin{equation*}
\vr(f)=\sup\left\{\int_{R}fg\,\d\mu\colon g\in\Mpl(R, \mu),\ \vr'(f)\leq 1\right\}.
\end{equation*}
If $\vr$ is a~rearrangement-invariant function norm, $X=X({\vr})$ is the rearrangement-invariant space determined by $\vr$, and $\vr'$ is the associate norm of $\vr$, then the function space $X({\vr'})$ determined by $\vr'$ is called the \emph{associate space} of $X$ and is denoted by $X'$. We always have that
\begin{equation}\label{prel:X''}
(X')'=X,
\end{equation}
and we shall write $X''$ instead of $(X')'$. Furthermore, the \emph{H\"older inequality}
\begin{equation}\label{prel:holder}
\int_{R}fg\,\d \mu\leq\|f\|_{X}\|g\|_{X'}
\end{equation}
holds for every $f,g\in \M(R, \mu)$.

An important corollary of the Hardy--Littlewood inequality~\eqref{prel:HL} is the fact that, if $f\in M(R, \mu)$ and $X$ is a~rearrangement-invariant space over $(R, \mu)$, then we in fact have that
\begin{equation}\label{prel:assocnormwithstars}
\|f\|_{X}=\sup\left\{\int_0^{\infty}g^*(t)f^*(t)\,\d t\colon \|g\|_{X'}\leq 1\right\}.
\end{equation}

For every rearrangement-invariant space $X$ over a $\sigma$-finite, nonatomic measure space $(R, \mu)$, there is a~unique rearran\-gement-invariant space $X(0,\mu(R))$ over the interval $(0,\mu(R))$ endowed with the one-dimensional Lebesgue measure such that $\|f\|_X=\|f^*\|_{X(0,\mu(R))}$. This space is called the~\textit{representation space} of $X$. This follows from the Luxemburg representation theorem (see \citep[Chapter~2, Theorem~4.10]{BS}). Throughout this paper, the representation space of a rearrangement-invariant space $X$ will be denoted by $X(0,\mu(R))$. It is worth noting that, when $R=(0,a)$, $a\in(0,\infty]$,  and $\mu$ is the Lebesgue measure, then every $X$ over $(R, \mu)$ coincides with its representation space.

If $X$ is a rearrangement-invariant space, we define its \textit{fundamental function}, $\varphi_X$, as
\begin{equation*}
\varphi_X(t)=\|\chi_E\|_X,\ t\in[0,\mu(R)),
\end{equation*}
where $E\subseteq R$ is any set such that $\mu(E)=t$. Property (P6) of rearrangement-invariant function norms guarantees that the fundamental function is well defined. Moreover, we have that
\begin{equation}\label{prel:fundamentalfuncsidentity}
	\varphi_X(t)\varphi_{X'}(t)=t\quad\text{for every $t\in[0,\mu(R))$}.
\end{equation}
The fundamental function $\varphi_X$ is a \emph{quasiconcave function on $[0,\mu(R))$}, that is, $\varphi_X(t)=0$ if and only if $t=0$, $\varphi_X$ is nondecreasing on $[0,\mu(E))$, and the function $t\mapsto\frac{\varphi_X(t)}{t}$ is nonincreasing on $(0,|E|)$.

There are always the smallest and the largest rearrangement-invariant spaces over $(R, \mu)$ whose fundamental functions are equivalent to a given quasiconcave function $\varphi$ on $[0,\mu(R))$. More precisely, the functionals $\|\cdot\|_{\Lambda_\varphi(R, \mu)}$ and $\|\cdot\|_{M_\varphi(R, \mu)}$ defined as
\begin{equation}\label{prel:defLorentzendpoint}
\|f\|_{\Lambda_\varphi(R, \mu)}=\int_{[0,\mu(R))}f^*(t)\,\d\tilde{\varphi}(t),\ f\in\Mpl(R, \mu),
\end{equation}
where $\tilde{\varphi}$ is the least concave majorant of $\varphi$, which satisfies $\frac{1}{2}\tilde{\varphi}\leq\varphi\leq\tilde{\varphi}$ on $[0,\mu(R))$, and
\begin{equation*}
\|f\|_{M_\varphi(R, \mu)}=\sup_{t\in(0,\mu(R))}f^{**}(t)\varphi(t),\ f\in\Mpl(R, \mu),
\end{equation*}
are rearrangement-invariant function norms, and the corresponding rearrangement-invariant spaces have the following properties. We have that $\varphi_{M_\varphi(R, \mu)}=\varphi$ and $\varphi_{\Lambda_\varphi(R, \mu)}=\tilde{\varphi}$, and
\begin{equation}\label{prel:endpoints}
\Lambda_\varphi(R, \mu)\hookrightarrow X\hookrightarrow M_\varphi(R, \mu)
\end{equation}
whenever $X$ is a rearrangement-invariant space over $(R, \mu)$ whose fundamental function is equivalent to $\varphi$. The integral in \eqref{prel:defLorentzendpoint} is to be interpreted as the Lebesgue-Stieltjes integral. The spaces $M_\varphi(R, \mu)$ and $\Lambda_\varphi(R, \mu)$ are sometimes called a \emph{Marcinkiewicz endpoint space} and a \emph{Lorentz endpoint space}, respectively. For more information on endpoint spaces, we refer the reader to \citep[Chapter~7, Section~10]{FSBook}.

Let $X$ and $Y$ be rearrangement-invariant spaces over the same measure space $(R, \mu)$. We write $X\hookrightarrow Y$ to denote the fact that $X$ is (continuously) embedded into $Y$, that is, there is a positive constant $C$ such that $\|f\|_{Y}\leq C\|f\|_{X}$ for every $f\in\M(R, \mu)$. Furthermore, we have that (\citep[Chapter~1, Theorem~1.8]{BS})
\begin{align}
X \subseteq Y\quad&\text{if and only if}\quad X \hookrightarrow Y,\nonumber
\intertext{and}
X \hookrightarrow Y\quad&\text{if and only if}\quad Y' \hookrightarrow X'\label{prel:embdual}
\end{align}
with the same embedding constants. We denote by $X=Y$ the fact that $X\hookrightarrow Y$ and $Y\hookrightarrow X$ simultaneously, that is, $X$ and $Y$ are equal in the set-theoretic sense and the norms on them are equivalent to each other.

We say that a function $f\in X$ has \emph{absolutely continuous norm in $X$} if $\lim\limits_{k\to\infty}\|f\chi_{E_k}\|_{X}=0$ whenever $E_k\subseteq R$, $k\in\N$, are measurable sets such that $\lim\limits_{k\to\infty}\chi_{E_k}(x)=0$ for $\mu$-a.e.~$x\in R$. Note that $f$ has absolutely continuous norm in $X$ if and only if $\lim\limits_{a\to0^+}\|f^*\chi_{(0,a)}\|_{X(0,\mu(R))}=0$. We say that a rearrangement-invariant space $X$ has absolutely continuous norm if every function $f\in X$ has absolutely continuous norm in $X$.

Let $X$ and $Y$ be rearrangement-invariant spaces over the same $\sigma$-finite, nonatomic measure space $(R, \mu)$. We say that $X$ is \emph{almost-compactly embedded} into $Y$, and we write $X\overset{*}{\hookrightarrow}Y$, if $\lim\limits_{k\to\infty}\sup\limits_{\|f\|_{X}\leq1}\|f\chi_{E_k}\|_{Y}=0$ whenever $E_k\subseteq R$, $k\in\N$, are measurable sets such that $\lim\limits_{k\to\infty}\chi_{E_k}(x)=0$ for $\mu$-a.e.~$x\in R$. If $X\overset{*}{\hookrightarrow}Y$, then $X\hookrightarrow Y$ (\citep[Theorem~7.11.5]{FSBook}) and (\citep[(3.1)]{F-MMP:10})
\begin{equation}\label{prel:almostcompembnecessary}
\lim\limits_{t\to0^+}\frac{\varphi_Y(t)}{\varphi_X(t)}=0.
\end{equation}
Furthermore,
\begin{align}
X\overset{*}{\hookrightarrow}Y\quad&\text{if and only if}\quad X(0,\mu(R))\overset{*}{\hookrightarrow}Y(0,\mu(R)),\nonumber\\
\intertext{and}
X\overset{*}{\hookrightarrow}Y\quad&\text{if and only if}\quad Y'\overset{*}{\hookrightarrow} X'.\label{prel:almostcompactdual}
\end{align}

We refer the reader to \citep{Sla:12} for more information on almost-compact embeddings.

Textbook examples of rearrangement-invariant spaces are the standard Lebesgue spaces. The functional $\|\cdot\|_{L^p(R, \mu)}$ is defined as
\begin{equation*}
\|f\|_{L^p(R, \mu)}=\begin{cases}
\left(\int_Rf(x)^p\,\d\mu(x)\right)^\frac1{p}\quad&\text{if $p\in(0,\infty)$},\\
\esssup\limits_{x\in R}f(x)\quad&\text{if $p=\infty$},
\end{cases}
\end{equation*}
for $f\in\Mpl(R, \mu)$. The functional $\|\cdot\|_{L^p(R, \mu)}$ is a rearrangement-invariant function norm if and only if $1\leq p\leq\infty$.

An important generalization of the Lebesgue functionals is constituted by the two-parameter Lorentz functionals. Let $0<p,q\leq\infty$. We define the functional $\|\cdot\|_{L^{p,q}(R, \mu)}$  as
\begin{equation*}
\|f\|_{L^{p,q}(R, \mu)}=\left\|t^{\frac1{p}-\frac1{q}}f^*(t)\right\|_{L^q(0,\mu(R))}
\end{equation*}
for  $f \in\Mpl(R, \mu)$. Here, and in what follows, we use the convention that $\frac1{\infty}=0$. The functional $\|\cdot\|_{L^{p,q}(R, \mu)}$ is equivalent to a~rearrangement-invariant function norm if and only if $1<p<\infty$ and $1\leq q\leq\infty$, or $p=q=1$, or $p=q=\infty$. In that case, the corresponding rearrangement-invariant space is called a \emph{Lorentz space} and
\begin{equation}\label{prel:lorentzass}
(L^{p,q})'(R, \mu)=L^{p',q'}(R, \mu).
\end{equation}
Note that $L^{p,p}(R, \mu)=L^p(R, \mu)$ with the same norms.

We say that a continuous function $b\colon(0,a]\to(0,\infty)$, where $a\in(0,\infty)$, is \emph{slowly varying} on $(0,a]$ if for every $\varepsilon>0$ there is $t_0\in(0,a)$ such that the functions $t\mapsto t^\varepsilon b(t)$ and $t\mapsto t^{-\varepsilon}b(t)$ are nondecreasing and nonincreasing, respectively, on the interval $(0,t_0)$. If $b$ is a slowly-varying function on $(0,a]$, so is $b^\alpha$ for any $\alpha\in\R$.

Assume now that $\mu(R)<\infty$. Let $0<p,q\le\infty$ and $b$ be a slowly-varying function on $(0,\mu(R)]$. We define the functional $\|\cdot\|_{L^{p,q;b}(R, \mu)}$ as
\begin{equation*}
\|f\|_{L^{p,q;b}(R, \mu)}=
\left\|t^{\frac{1}{p}-\frac{1}{q}}b(t)f^*(t)\right\|_{L^q(0,\mu(R))}
\end{equation*}
for $f\in\Mpl(R, \mu)$. The functional $\|\cdot\|_{L^{p,q;b}(R, \mu)}$ is equivalent to a rearrangement-invariant function norm if and only if $q\in[1,\infty]$ and one of the following conditions holds (\citep[Theorem~3.35]{P}):
\begin{equation*}
\begin{cases}
1<p<\infty;\\
p=q=1,\ \text{$b$ is equivalent to a nonincreasing function on $(0,\mu(R)]$};\\
p=\infty,\ q<\infty,\ \int_0^{\mu(R)}t^{-1}b^q(t)\,\d t<\infty;\\
p=q=\infty,\ b\in L^\infty(0,\mu(R)).
\end{cases}
\end{equation*}
If this is the  case, the corresponding rearrangement-invariant space is called a \emph{Lorentz--Karamata space}. Note that Lebesgue and Lorentz spaces are instances of Lorentz--Karamata spaces ($b\equiv1$). Another
important subclass of Lorentz-Karamata spaces is that of (generalized) \emph{Lorentz--Zygmund spaces} $L^{p,q;\alpha,\beta}(R, \mu)$, which, in the language of Lorentz--Karamata spaces, correspond to
$b(t)=\ell(t)^\alpha\ell\ell(t)^\beta$, $\alpha,\beta\in\R$, where $\ell(t)=\log\left(e\frac{\mu(R)}{t}\right)$ and $\ell\ell(t)=\log\left(e\log\left(e\frac{\mu(R)}{t}\right)\right)$, $t\in(0,\mu(R)]$. If $\beta=0$, we usually
write $L^{p,q;\alpha}(R, \mu)$ instead of $L^{p,q;\alpha,0}(R, \mu)$. We will also occasionally need Lorentz--Zygmund spaces with more than two levels of logarithms, and we define such spaces in the obvious way. The class of Lorentz--Zygmund
spaces (more generally, that of Lorentz--Karamata spaces) encompasses not only Lebesgue spaces and Lorentz spaces, but also all types of exponential and logarithmic classes, and also the spaces discovered independently by Maz'ya (in
a~somewhat implicit form involving capacitary estimates~\citep[pp.~105 and~109]{Ma:11}), Hansson~\citep{Ha} and Brezis--Wainger~\citep{BW}, who used them for describing the sharp target space in a limiting Sobolev embedding (the
spaces can be also traced in the works of Brudnyi~\citep{B} and, in a more general setting, Cwikel and Pustylnik~\citep{CP}). For more information on Lorentz--Karamata and Lorentz--Zygmund spaces, we refer the interested reader to \citep{OP,P}.

A large number of rearrangement-invariant spaces (including the Lorentz-Karamata ones) are actually just special instances of the so-called \emph{classical Lorentz spaces} $\Lambda^q(v)$, $q\in[1, \infty]$, for suitable choices of the weight function $v$. A \emph{weight} on $(0,\mu(R))$ is any nonnegative, measurable function on $(0,\mu(R))$ that is positive on the interval $(0,\delta)$ for some $\delta\in(0,\mu(R))$ and such that $V(t)<\infty$ for every $t\in(0,\mu(R))$, where $V(t)=\int_0^tv(s)\,\d s$, $t\in(0,\mu(R))$. If $v$ is a weight on $(0,\mu(R))$, we define the functional $\|\cdot\|_{\Lambda^q(v)}$ as
\begin{equation*}
\|f\|_{\Lambda^q(v)}=\begin{cases}
\left(\int_0^{\mu(R)}f^*(t)^qv(t)\,\d t\right)^\frac1{q}\quad&\text{if $q\in[1,\infty)$},\\
\esssup\limits_{t\in(0,\mu(R))}f^*(t)v(t)\quad&\text{if $q=\infty$},
\end{cases}
\end{equation*}
for $f\in\Mpl(R, \mu)$. The functional $\|\cdot\|_{\Lambda^q(v)}$ is equivalent to a rearrangement-invariant function norm if and only if
\begin{equation}\label{prel:lambdari}
\begin{cases}
\frac1{t}\int_0^tv(u)\,\d u\lesssim\frac1{s}\int_0^sv(u)\,\d u\quad\text{for every $0<s< t<\mu(R)$}\quad&\text{if $q=1$},\\
\int_0^ts^{q'}v(s)V^{-q'}(s)\,\d s\lesssim t^{q'}V^{1-q'}(t)\quad\text{for every $0<t<\mu(R)$}\quad&\text{if $q\in(1,\infty)$},\\
\text{$\tilde{v}$ is a finite function}\quad\text{and}\quad\sup\limits_{t\in(0,\mu(R))}\tilde{v}(t)\frac1{t}\int_0^t\frac1{\tilde{v}(s)}\,\d s<\infty\quad&\text{if $q=\infty$},
\end{cases}
\end{equation}
where $\tilde{v}(t)=\esssup\limits_{s\in(0,t)}v(s)$, $t\in(0,\mu(R))$, that is, $\tilde{v}$ is the least nondecreasing (essential) majorant of $v$. We refer the reader to \citep{S:90} for $q\in(1,\infty)$, to \citep{CGS:96} for $q=1$  and to \citep{GS:14} for $q\in(1,\infty]$. The multiplicative constants in \eqref{prel:lambdari} may depend only on $q$ and $v$.

Finally, we define \emph{Sobolev-type spaces built upon rearrangement-invariant spaces}. Let $\Omega$ be a bounded Lipschitz domain (e.g.~\citep[Chapter~4, 4.9]{AF:03}) in $\rn$, $n \geq 2$. Given $m \in \N$ and a rearrangement-invariant space $X(\Omega)$ over $\Omega$ endowed with the Lebesgue measure, the $m$-th order Sobolev-type space $W^m X(\Omega)$ is defined as
\begin{equation*}
W^m X(\Omega) =   \big\{u\colon \hbox{$u$ is $m$-times weakly differentiable in $\Omega$ and $|\nabla ^k u| \in X(\Omega )$ for $k=0, \dots , m$}\big\}.
\end{equation*}
Here, $\nabla^k u$ denotes the vector of all $k$-th order weak derivatives of $u$ and $\nabla ^0 u=u$. The Sobolev-type space  $W^m X(\Omega)$ equipped with the norm
\begin{equation*}
\|u\|_{W^m X(\Omega )} = \sum _{k=0}^{m} \| \, |\nabla^k u| \, \|_{X(\Omega  )},\ u \in W^m X(\Omega),
\end{equation*}
is a Banach space. When $X(\Omega) = L^p (\Omega )$, $p \in [1,\infty]$, one has  $W^mX(\Omega) = W^{m,p}(\Omega )$ (the standard Sobolev space of $m$-th order on $\Omega$).

We say that $W^mX(\Omega)$ is embedded into a rearrangement-invariant space $Y(\overline{\Omega}, \nu)$ on $\overline{\Omega}$ endowed with a $d$-Ahlfors measure $\nu$, and we write $W^mX(\Omega)\rightarrow Y(\overline{\Omega}, \nu)$, if there is a bounded linear operator $\Tr\colon W^mX(\Omega)\to Y(\overline{\Omega}, \nu)$ such that $\Tr u=u$ for every $u\in W^mX(\Omega)\cap\mathcal C(\overline{\Omega})$. If the operator is compact, we say that the embedding is compact.

\section{On enlarging Marcinkiewicz spaces}\label{sec:q1q2}

We begin with a proposition concerning concave functions, which, while elementary, is of independent interest in the theory of Marcinkiewicz spaces.

\begin{proposition}\label{prop:piecewiseconcavefunction}
Let $\varphi\colon[0,a]\rightarrow[0,\infty)$, where $a\in(0,\infty)$, be a nondecreasing, concave function vanishing only at the origin. Assume that
\begin{equation}\label{thm:piecewiseconcavefunction:assumption1}
\lim\limits_{t\to0^+}\frac{t}{\varphi(t)}=0
\end{equation}
and
\begin{equation}\label{thm:piecewiseconcavefunction:assumption2}
\lim\limits_{t\to0^+}\varphi(t)=0.
\end{equation}
There exists a nondecreasing, concave function $\psi\colon[0,a]\rightarrow[0,\infty)$ such that $\psi\leq\varphi$ on $[0,a]$, $\psi$ vanishes only at the origin, $\liminf\limits_{t\to0^+}\frac{\psi(t)}{\varphi(t)}=0$ and $\limsup\limits_{t\to0^+}\frac{\psi(t)}{\varphi(t)}=1$. 
\end{proposition}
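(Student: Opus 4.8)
The plan is to construct $\psi$ as the piecewise-linear interpolant of $\varphi$ through a carefully chosen sequence $a=t_1>t_2>t_3>\cdots\searrow 0$, using the odd-indexed nodes as \emph{contact points} (where $\psi=\varphi$) and the even-indexed nodes as \emph{points of large deficiency} (where $\psi\ll\varphi$). Interpolating a concave function at a decreasing family of nodes automatically produces a function that is $\le\varphi$, is nondecreasing, and — by the three-chord (slope) inequality — is concave; the contact points will force $\limsup_{t\to0^+}\psi/\varphi=1$, and the deficiency points will force $\liminf_{t\to0^+}\psi/\varphi=0$.

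First I would select the sequence inductively. Put $t_1=a$. Having chosen $t_1>\cdots>t_{2k-1}$, I use hypothesis \eqref{thm:piecewiseconcavefunction:assumption1} — which says precisely that $\varphi(t)/t\to\infty$ as $t\to0^+$, since $\varphi>0$ on $(0,a]$ — to pick $t_{2k}\in(0,t_{2k-1}/2)$ with $\varphi(t_{2k})/t_{2k}\ge k\,\varphi(t_{2k-1})/t_{2k-1}$, and then hypothesis \eqref{thm:piecewiseconcavefunction:assumption2} to pick $t_{2k+1}\in(0,t_{2k}/2)$ with $\varphi(t_{2k+1})\le\varphi(t_{2k})/k$. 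The halving ensures $t_j\searrow0$. I then define $\psi(0)=0$ and let $\psi$ be, on each interval $[t_{2k+1},t_{2k-1}]$, the chord of $\varphi$ joining the endpoints $(t_{2k+1},\varphi(t_{2k+1}))$ and $(t_{2k-1},\varphi(t_{2k-1}))$; this defines $\psi$ on all of $[0,a]$ since $\bigcup_{k\ge1}[t_{2k+1},t_{2k-1}]=(0,a]$. Concavity of $\varphi$ gives $\psi\le\varphi$ on $(0,a]$, monotonicity of $\varphi$ gives that $\psi$ is nondecreasing, and $\psi(t)\ge\varphi(t_{2k+1})>0$ for $t\ge t_{2k+1}$ shows that $\psi$ vanishes only at the origin.

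For the two limits: since $\psi(t_{2k-1})=\varphi(t_{2k-1})$ with $t_{2k-1}\to0$ as $k\to\infty$, while $\psi\le\varphi$, one gets $\limsup_{t\to0^+}\psi(t)/\varphi(t)=1$ at once. To see $\liminf_{t\to0^+}\psi(t)/\varphi(t)=0$, I evaluate at $t_{2k}\in(t_{2k+1},t_{2k-1})$ and write $\psi(t_{2k})$ as the chord's convex combination of $\varphi(t_{2k+1})$ and $\varphi(t_{2k-1})$. The $\varphi(t_{2k+1})$-contribution is at most $\varphi(t_{2k+1})\le\varphi(t_{2k})/k$. Since $t_{2k+1}<t_{2k-1}/2$, the coefficient of $\varphi(t_{2k-1})$ is at most $t_{2k}/(t_{2k-1}-t_{2k+1})\le 2t_{2k}/t_{2k-1}$, so that contribution is at most $2t_{2k}\,\varphi(t_{2k-1})/t_{2k-1}\le 2\varphi(t_{2k})/k$ by the choice of $t_{2k}$. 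Hence $\psi(t_{2k})/\varphi(t_{2k})\le 3/k\to0$, giving $\liminf_{t\to0^+}\psi(t)/\varphi(t)=0$.

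The one step that needs genuine care rather than routine bookkeeping is verifying that gluing infinitely many chords accumulating at $0$, together with the value $\psi(0)=0$, really yields a concave function on the closed interval $[0,a]$. Here the three-chord inequality for $\varphi$ shows that the chord slopes over consecutive intervals $[t_{2k+3},t_{2k+1}]$ and $[t_{2k+1},t_{2k-1}]$ are nonincreasing in $t$, so $\psi$ is concave on $(0,a]$; and the extension by $\psi(0)=0$ preserves concavity precisely because $\varphi(0)=0$ forces each of these chords, extrapolated back to $t=0$, to lie at or above $0$, which is equivalent to $t\mapsto\psi(t)/t$ being nonincreasing — the condition needed to include the left endpoint. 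This is the only place where both the concavity of $\varphi$ and its normalization $\varphi(0)=0$ are essential; all the rest is juggling the two defining inequalities of the sequence $\{t_j\}$.
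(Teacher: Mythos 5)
Your proof is correct and follows essentially the same strategy as the paper: a piecewise-linear interpolant of $\varphi$ at a sparse decreasing sequence of contact points (forcing $\limsup=1$), with intermediate test points chosen via assumption \eqref{thm:piecewiseconcavefunction:assumption1} where the chord's convex-combination coefficients make the ratio small (forcing $\liminf=0$), and assumption \eqref{thm:piecewiseconcavefunction:assumption2} used to kill the contribution of the lower endpoint. The only differences are cosmetic — you interleave the contact and test points into one sequence and bound the convex-combination coefficients directly rather than comparing secant slopes as in \eqref{thm:piecewiseconcavefunction:property1} — and your bounds ($3/k$ at $t_{2k}$) are just as effective as the paper's $2^{-k}$ at $\tau_k$.
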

\begin{proof}
We shall find, by induction, two sequences $\{t_k\}_{k=1}^\infty$, $\{\tau_k\}_{k=1}^\infty$ of positive numbers converging to $0$ such that, for each $k\in\N$,
\begin{align}
&t_{k+1}<\tau_{k}<t_k,\nonumber\\
&\frac{\varphi(\tau_k)-\varphi(t_{k+1})}{\tau_k-t_{k+1}}>2^{k+1}\frac{\varphi(t_k)-\varphi(t_{k+1})}{t_k-t_{k+1}},\label{thm:piecewiseconcavefunction:property1}\\
&\varphi(t_{k+1})\leq2^{-k-1}\varphi(\tau_k)\label{thm:piecewiseconcavefunction:property2}.
\end{align}
Set $t_1=a$ and assume that we have already found $t_1,\dots,t_k$ and $\tau_1,\dots,\tau_{k-1}$ for some $k\in\N$. By \eqref{thm:piecewiseconcavefunction:assumption1} there exists $\tau_k\in(0,\frac{t_k}{2})$ such that $\frac{\varphi(\tau_k)}{\tau_k}>2^{k+1}\frac{\varphi(t_k)}{t_k}$. Since $\lim\limits_{t\to0^+}\frac{\varphi(\tau_k)-\varphi(t)}{\tau_k-t} = \frac{\varphi(\tau_k)}{\tau_k}$ by \eqref{thm:piecewiseconcavefunction:assumption2}, there exists $t_{k+1}\in(0,\tau_k)$ such that $\frac{\varphi(\tau_k) - \varphi(t_{k+1})}{\tau_k - t_{k+1}}>2^{k+1}\frac{\varphi(t_k)}{t_k}$. Moreover, we can find $t_{k+1}$ in such a way that $\varphi(t_{k+1})\leq2^{-k-1}\varphi(\tau_k)$ thanks to \eqref{thm:piecewiseconcavefunction:assumption2} and the fact that $\varphi(\tau_k)\neq0$.

Clearly $t_{k+1}<\tau_k<t_k$ and $t_{k+1}\leq\frac1{2^k}$. Since $\varphi$ is concave, we have that $\frac{\varphi(t_k)}{t_k}\geq\frac{\varphi(t_k)-\varphi(t_{k+1})}{t_k-t_{k+1}}$. Hence $\frac{\varphi(\tau_k) - \varphi(t_{k+1})}{\tau_k - t_{k+1}}>2^{k+1}\frac{\varphi(t_k)-\varphi(t_{k+1})}{t_k-t_{k+1}}$. This completes the inductive step.

We define the function $\psi\colon[0,a]\rightarrow[0,\infty)$ as
\begin{equation*}
\psi(t)=\begin{cases}\varphi(t_{k+1})+\frac{\varphi(t_k)-\varphi(t_{k+1})}{t_k-t_{k+1}}(t-t_{k+1}),\ &t\in(t_{k+1},t_k],\ k\in\N,\\
0,\ &t=0.
\end{cases}
\end{equation*}
Note that, since $\varphi$ is concave and nondecreasing, so is $\psi$. Clearly, $\psi(t_k)=\varphi(t_k)$ for each $k\in\N$. Furthermore, since $\varphi$ is concave and vanishes only at the origin, we have $0<\psi\leq\varphi$ on $(0,a]$. Hence $\limsup\limits_{t\to0^+}\frac{\psi(t)}{\varphi(t)}=1$.

Finally, for each $k\in\N$,
\begin{align*}
\frac{\psi(\tau_k)}{\varphi(\tau_k)}&=\frac{\varphi(t_{k+1})+\frac{\varphi(t_k)-\varphi(t_{k+1})}{t_k-t_{k+1}}(\tau_k-t_{k+1})}{\varphi(\tau_k)} \leq \frac{\varphi(t_{k+1})+2^{-k-1}\frac{\varphi(\tau_k)-\varphi(t_{k+1})}{\tau_k-t_{k+1}}(\tau_k-t_{k+1})}{\varphi(\tau_k)}\\
&=\frac{(1-2^{-k-1})\varphi(t_{k+1}) + 2^{-k-1}\varphi(\tau_k)}{\varphi(\tau_k)}\leq\frac{2^{-k}\varphi(\tau_k)}{\varphi(\tau_k)}=2^{-k},
\end{align*}
where the first and the second inequalities follow from \eqref{thm:piecewiseconcavefunction:property1} and \eqref{thm:piecewiseconcavefunction:property2}, respectively. Hence $\liminf\limits_{t\to0^+}\frac{\psi(t)}{\varphi(t)}=0$.
\end{proof}

We are now in the position to give a complete answer to \hyperref[q:noncompactMarc]{Question~\ref*{q:noncompactMarc}}. Note that $X(\Omega)\not\subseteq L^{\frac{n}{m},1}(\Omega)$ is equivalent to $t^{\frac{m}{n}-1}\notin X'(0,|\Omega|)$. This follows easily from \eqref{prel:embdual} combined with \eqref{prel:lorentzass}. The latter condition is usually easier to verify, and so we use it in the statement of the following theorem.

\begin{theorem}\label{T:answer-to-Q1}
Let $\Omega\subseteq\rn$ be a bounded Lipschitz domain, $m\in\N$, $m<n$, $d\in[n-m,n]$ and $\nu$ a $d$-Ahlfors measure on $\overline{\Omega}$. Assume that $X(\Omega)$ is a rearrangement-invariant space such that $t^{\frac{m}{n}-1}\notin X'(0,|\Omega|)$. Furthermore, if $d=n-m$, we assume that $X(\Omega)\neq L^1(\Omega)$. There is a rearrangement-invariant space $Y(\overline{\Omega}, \nu)$ such that $M_{Y_X}(\overline{\Omega}, \nu)\subsetneq Y(\overline{\Omega}, \nu)$ and the embedding $W^mX(\Omega)\hookrightarrow Y(\overline{\Omega}, \nu)$ is not compact.
\end{theorem}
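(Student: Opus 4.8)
The plan is to take the required space to be a Marcinkiewicz endpoint space $Y=M_\psi(\overline{\Omega},\nu)$, where $\psi$ is produced from the fundamental function of $Y_X(\overline{\Omega},\nu)$ by \hyperref[prop:piecewiseconcavefunction]{Proposition~\ref*{prop:piecewiseconcavefunction}}. Write $\varphi$ for the least concave majorant of $\varphi_{Y_X}$, so that $\frac12\varphi\le\varphi_{Y_X}\le\varphi$, hence $\varphi_{Y_X}\approx\varphi$, and hence $M_\varphi(\overline{\Omega},\nu)=M_{Y_X}(\overline{\Omega},\nu)$. To invoke the proposition, I first have to check that $\varphi$ meets its two hypotheses, namely $\lim_{t\to0^+}\varphi(t)=0$ and $\lim_{t\to0^+}t/\varphi(t)=0$. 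Both are instances of the elementary fact that the fundamental function of a rearrangement-invariant space over a nonatomic finite measure space tends to $0$ at the origin precisely when the space is not $L^\infty$: if $\varphi_Z$ stays bounded away from $0$, then $\|f\|_Z\gtrsim\|f\|_{L^\infty}$ for every $f$, forcing $Z=L^\infty$ (the smallest r.i.\ space on a finite measure space). Applying this to $Z=Y_X(\overline{\Omega},\nu)$ yields the first condition; applying it to $Z=(Y_X)'(\overline{\Omega},\nu)$ together with \eqref{prel:fundamentalfuncsidentity} (which gives $t/\varphi_{Y_X}(t)=\varphi_{(Y_X)'}(t)$) yields the second. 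Thus the two hypotheses are equivalent to $Y_X(\overline{\Omega},\nu)\neq L^\infty(\overline{\Omega},\nu)$ and $Y_X(\overline{\Omega},\nu)\neq L^1(\overline{\Omega},\nu)$, respectively. The first follows from the assumption $t^{\frac{m}{n}-1}\notin X'(0,|\Omega|)$ through the explicit description of the optimal target space in \citep[Theorem~4.4]{CPS:20} (cf.\ \citep[Theorem~3.1]{CPS_OrlLor:20}); the second is automatic when $d>n-m$, and in the borderline case $d=n-m$ it is exactly what the extra hypothesis $X(\Omega)\neq L^1(\Omega)$ secures. This is the only place where the restrictions on $d$, $m$, $n$ and $X$ are genuinely used.

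With the hypotheses verified, \hyperref[prop:piecewiseconcavefunction]{Proposition~\ref*{prop:piecewiseconcavefunction}} supplies a nondecreasing concave function $\psi$, vanishing only at the origin, with $\psi\le\varphi$, $\liminf_{t\to0^+}\frac{\psi(t)}{\varphi(t)}=0$ and $\limsup_{t\to0^+}\frac{\psi(t)}{\varphi(t)}=1$. Since $\psi$ is quasiconcave, $Y:=M_\psi(\overline{\Omega},\nu)$ is a genuine rearrangement-invariant space with $\varphi_Y=\psi$. From $\psi\le\varphi$ one gets $\|f\|_{M_\psi}\le\|f\|_{M_\varphi}$ for every $f$, hence $M_{Y_X}(\overline{\Omega},\nu)=M_\varphi(\overline{\Omega},\nu)\hookrightarrow M_\psi(\overline{\Omega},\nu)=Y$; the inclusion is strict because two Marcinkiewicz endpoint spaces coincide only when their fundamental functions are equivalent, which $\liminf_{t\to0^+}\frac{\psi(t)}{\varphi(t)}=0$ rules out. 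In particular $Y\supsetneq M_{Y_X}(\overline{\Omega},\nu)\supsetneq L^\infty(\overline{\Omega},\nu)$, so $Y\neq L^\infty(\overline{\Omega},\nu)$, and the chain $W^mX(\Omega)\hookrightarrow Y_X(\overline{\Omega},\nu)\hookrightarrow M_{Y_X}(\overline{\Omega},\nu)\hookrightarrow Y$ gives the continuous embedding $W^mX(\Omega)\hookrightarrow Y$.

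It remains to show that $W^mX(\Omega)\hookrightarrow Y$ is not compact. Since $Y\neq L^\infty(\overline{\Omega},\nu)$, the characterization \citep[Theorem~4.1]{CM:20} reduces this to showing that $Y_X(\overline{\Omega},\nu)$ is \emph{not} almost-compactly embedded into $Y$. If it were, then \eqref{prel:almostcompembnecessary} would give $\lim_{t\to0^+}\frac{\varphi_Y(t)}{\varphi_{Y_X}(t)}=0$, that is, $\lim_{t\to0^+}\frac{\psi(t)}{\varphi_{Y_X}(t)}=0$, and hence, since $\varphi_{Y_X}\approx\varphi$, $\lim_{t\to0^+}\frac{\psi(t)}{\varphi(t)}=0$, contradicting $\limsup_{t\to0^+}\frac{\psi(t)}{\varphi(t)}=1$. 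Therefore $Y_X(\overline{\Omega},\nu)\not\overset{*}{\hookrightarrow}Y$, and $W^mX(\Omega)\hookrightarrow Y$ is not compact, which completes the argument.

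I expect the only substantive point to be the translation made in the first paragraph: identifying the two limit conditions demanded by \hyperref[prop:piecewiseconcavefunction]{Proposition~\ref*{prop:piecewiseconcavefunction}} with the statements $Y_X\neq L^\infty$ and $Y_X\neq L^1$, and then deriving these from the hypotheses via the reduction principle of \citep{CPS:20,CPS_OrlLor:20} — this is where the roles of the parameter range $d\in[n-m,n]$, the condition $t^{\frac{m}{n}-1}\notin X'(0,|\Omega|)$, and the separate treatment of the borderline case $d=n-m$ all appear. Everything else is soft: quasiconcavity of $\psi$ and the ensuing identity $\varphi_{M_\psi}=\psi$, monotonicity of $\varphi\mapsto M_\varphi$, the necessary condition \eqref{prel:almostcompembnecessary} for almost-compact embeddings, and the compactness criterion \citep[Theorem~4.1]{CM:20}.
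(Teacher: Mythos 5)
Your proof is correct and follows essentially the same route as the paper: reduce to Proposition~\ref{prop:piecewiseconcavefunction} by verifying that $\varphi_{Y_X}$ vanishes at the origin and satisfies $\lim_{t\to0^+}t/\varphi_{Y_X}(t)=0$ (equivalently, $Y_X\neq L^\infty$ and $Y_X\neq L^1$, which you justify by a clean direct argument where the paper cites \citep[Theorems~5.2 and~5.3]{Sla:12}), take $Y=M_\psi(\overline{\Omega},\nu)$, and use $\limsup_{t\to0^+}\psi(t)/\varphi(t)=1$ together with \eqref{prel:almostcompembnecessary} and \citep[Theorem~4.1]{CM:20} to rule out compactness. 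The only step you assert rather than derive is that $X(\Omega)\neq L^1(\Omega)$ forces $Y_X(\overline{\Omega},\nu)\neq L^1(\overline{\Omega},\nu)$ in the borderline case $d=n-m$; this is true but not formal — the paper obtains it by passing from $X(\Omega)\neq L^1(\Omega)$ to $X(\Omega)\overset{*}{\hookrightarrow}L^1(\Omega)$ via \citep[Theorem~5.3]{Sla:12} and then transferring the almost-compact embedding to the target side to conclude $Y_X(\overline{\Omega},\nu)\overset{*}{\hookrightarrow}L^1(\overline{\Omega},\nu)$, hence $Y_X(\overline{\Omega},\nu)\neq L^1(\overline{\Omega},\nu)$.
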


\begin{proof}
Set $\varphi=\varphi_{Y_X}$, where $\varphi_{Y_X}$ is the fundamental function of $Y_X(\overline{\Omega}, \nu)$. Without loss of generality, we may assume that $\varphi$ is concave on $[0,\nu(\overline{\Omega})]$ (see~\citep[Chapter~2, Proposition~5.11]{BS}).
The assumption $t^{\frac{m}{n}-1}\notin X'(0,|\Omega|)$ is equivalent to $X(\Omega)\not\subseteq L^{\frac{n}{m},1}(\Omega)$, which ensures that $Y_X(\overline{\Omega}, \nu)\neq L^\infty(\overline{\Omega}, \nu)$. Hence $\varphi$
satisfies \eqref{thm:piecewiseconcavefunction:assumption2} (\citep[Theorem~5.2]{Sla:12}). Next, we claim that $Y_X(\overline{\Omega}, \nu)\neq L^1(\overline{\Omega}, \nu)$. Indeed, if $d\in(n-m,n]$, then this follows from
$Y_X(\overline{\Omega}, \nu)\subseteq Y_{L^1}(\overline{\Omega}, \nu)=L^{\frac{d}{n-m},1}(\overline{\Omega}, \nu)\subsetneq L^1(\overline{\Omega}, \nu)$ (\citep[Theorem~3.1]{CPS_OrlLor:20}). If $d=n-m$, then $X(\Omega)\neq L^1(\Omega)$ is assumed. Therefore, it follows from \citep[Theorem~5.3]{Sla:12} that $X(\Omega)\stackrel{*}{\hookrightarrow} L^1(\Omega)$. That, however, in turn implies that $Y_X(\overline{\Omega}, \nu)\neq L^1(\overline{\Omega}, \nu)$ thanks to \citep[Proposition~3.5]{CaMi:19} combined with
\citep[Theorem~3.6]{CM:20} (see also \citep[Theorem~4.6]{Sla:15}). Consequently, applying \citep[Theorem~5.3]{Sla:12} once again, we obtain that $\varphi$ satisfies
also \eqref{thm:piecewiseconcavefunction:assumption1}.

By virtue of \hyperref[prop:piecewiseconcavefunction]{Proposition~\ref*{prop:piecewiseconcavefunction}}, there is a nondecreasing, concave function $\psi\colon[0,\nu(\overline{\Omega})]\rightarrow[0,\infty)$ such that $\psi\leq\varphi$ on $[0,\nu(\overline{\Omega})]$, $\liminf\limits_{t\to0^+}\frac{\psi(t)}{\varphi(t)}=0$ and $\limsup\limits_{t\to0^+}\frac{\psi(t)}{\varphi(t)}=1$. We have that $M_{Y_X}(\overline{\Omega}, \nu)\subsetneq M_\psi(\overline{\Omega}, \nu)$ because $\psi\leq\varphi$ on $[0,\nu(\overline{\Omega})]$ and $\liminf\limits_{t\to0^+}\frac{\psi(t)}{\varphi(t)}=0$. Furthermore, $Y_X(\overline{\Omega}, \nu)$ is not almost compactly embedded into $M_\psi(\overline{\Omega}, \nu)$. This follows from the fact that $\limsup\limits_{t\to0^+}\frac{\psi(t)}{\varphi(t)}=1$ and \eqref{prel:almostcompembnecessary}. Therefore, $W^mX(\Omega)\hookrightarrow M_\psi(\overline{\Omega}, \nu)$ is not compact thanks to \citep[Theorem~4.1]{CM:20}. Hence $Y(\overline{\Omega}, \nu)=M_\psi(\overline{\Omega}, \nu)$ has the desired properties.
\end{proof}

\begin{remark}
We would like to point out that the assumption of Theorem~\ref{T:answer-to-Q1} that $X(\Omega)\neq L^1(\Omega)$ when $d=n-m$ brings no restriction at all, for if
$X(\Omega)=L^1(\Omega)$ and $d=n-m$, then
\begin{equation*}
Y_X(\overline{\Omega}, \nu)=L^{\frac{d}{n-m},1}(\overline{\Omega}, \nu)=L^1(\overline{\Omega}, \nu)=M_{L^1}(\overline{\Omega}, \nu).
\end{equation*}
The first identity follows from~\citep[Theorem~3.1]{CPS_OrlLor:20}, and the remaining ones are well known. Since
$L^1(\overline{\Omega}, \nu)$ is the largest rearrangement-invariant space over $(\overline{\Omega}, \nu)$ (\citep[Chapter~2, Corollary~6.7]{BS}), there is no rearrangement-invariant space $Y(\overline{\Omega}, \nu)$ bigger than $Y_X(\overline{\Omega}, \nu)$; thus the answer to \hyperref[q:noncompactMarc]{Question~\ref*{q:noncompactMarc}} is negative in this case.
\end{remark}

Having answered \hyperref[q:noncompactMarc]{Question~\ref*{q:noncompactMarc}}, we now turn our attention to \hyperref[q:noncompactfundamentallybigger1]{Question~\ref*{q:noncompactfundamentallybigger1}}. We start with some observations.
\begin{proposition}
Let $\Omega\subseteq\rn$ be a bounded Lipschitz domain, $m\in\N$, $m<n$, $d\in[n-m,n]$ and $\nu$ a $d$-Ahlfors measure on $\overline{\Omega}$. If $Y(\overline{\Omega}, \nu)$ is a rearrangement-invariant space satisfying properties required by \hyperref[q:noncompactfundamentallybigger1]{Question~\ref*{q:noncompactfundamentallybigger1}}, then:
\begin{itemize}
\item $Y(\overline{\Omega}, \nu)$ is not a Marcinkiewicz space;
\item $M_{Y_X}(\overline{\Omega}, \nu)\not\hookrightarrow \Lambda_{Y}(\overline{\Omega}, \nu)$, where $\Lambda_{Y}(\overline{\Omega}, \nu)$ is the Lorentz endpoint space with the same fundamental function as $Y(\overline{\Omega}, \nu)$;
\item $Y(\overline{\Omega}, \nu)$ does not have absolutely continuous norm.
\end{itemize}
\end{proposition}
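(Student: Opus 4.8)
The plan is to establish all three items by contradiction, in each case producing an almost‑compact embedding $Y_X(\overline{\Omega},\nu)\overset{*}{\hookrightarrow}Y(\overline{\Omega},\nu)$, which is impossible: since $Y(\overline{\Omega},\nu)$ is strictly larger than $Y_X(\overline{\Omega},\nu)$, we have $Y(\overline{\Omega},\nu)\neq L^\infty(\overline{\Omega},\nu)$, and then \citep[Theorem~4.1]{CM:20}, combined with the assumed non‑compactness of $W^mX(\Omega)\hookrightarrow Y(\overline{\Omega},\nu)$, forces $Y_X(\overline{\Omega},\nu)\not\overset{*}{\hookrightarrow}Y(\overline{\Omega},\nu)$. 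Set $\varphi=\varphi_{Y_X}$. Much as in the proof of \hyperref[T:answer-to-Q1]{Theorem~\ref*{T:answer-to-Q1}} one checks that $\varphi$ satisfies \eqref{thm:piecewiseconcavefunction:assumption1} and \eqref{thm:piecewiseconcavefunction:assumption2}: indeed $Y_X\neq L^\infty(\overline{\Omega},\nu)$ because $X(\Omega)\not\subseteq L^{\frac nm,1}(\Omega)$, and $Y_X\neq L^1(\overline{\Omega},\nu)$ because no rearrangement‑invariant space is strictly larger than $L^1$. I record two further ingredients used below: first, $Y_X\hookrightarrow M_{\varphi}$ by \eqref{prel:endpoints}, so there is $C_1\ge1$ with $f^{**}(t)\varphi(t)\le C_1$ whenever $\|f\|_{Y_X}\le1$; second, the third bullet of \hyperref[q:noncompactfundamentallybigger1]{Question~\ref*{q:noncompactfundamentallybigger1}} gives $\lim_{a\to0^+}\sup_{0<t\le a}\varphi_Y(t)/\varphi(t)=0$ and, since $\varphi$ is bounded near $0$, also $\varphi_Y(0^+)=0$. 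Throughout I shall freely use the standard characterization (see \citep{Sla:12}) that $A\overset{*}{\hookrightarrow}B$ iff $\lim_{a\to0^+}\sup_{\|f\|_A\le1}\|f^*\chi_{(0,a)}\|_{B(0,\nu(\overline{\Omega}))}=0$.

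\emph{First bullet.} Suppose $Y$ were a Marcinkiewicz space, i.e.\ $Y=M_{\varphi_Y}$. For $\|f\|_{Y_X}\le1$ and $a\in(0,\nu(\overline{\Omega}))$, split the supremum defining $\|f^*\chi_{(0,a)}\|_{M_{\varphi_Y}}$ according to $t\le a$ and $t>a$; using $f^{**}(t)\le C_1/\varphi(t)$ on the first range, and the quasiconcavity of $\varphi_Y$ (so that $\varphi_Y(t)/t\le\varphi_Y(a)/a$ for $t\ge a$) on the second, one gets $\|f^*\chi_{(0,a)}\|_{M_{\varphi_Y}}\le C_1\sup_{0<t\le a}\varphi_Y(t)/\varphi(t)$. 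The right‑hand side tends to $0$ with $a$ independently of $f$, so $Y_X\overset{*}{\hookrightarrow}M_{\varphi_Y}=Y$ --- a contradiction.

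\emph{Second bullet.} The engine is an \emph{extremal‑function lemma}: letting $w$ be the least concave majorant of the quasiconcave function $t\mapsto t/\varphi(t)$ (which vanishes at $0$ by \eqref{thm:piecewiseconcavefunction:assumption1}) and $v=w'$, one has $v$ nonincreasing, $\int_0^t v=w(t)\ge t/\varphi(t)$, and $v^{**}(t)\varphi(t)=w(t)\varphi(t)/t\le2$, hence $v\in M_{Y_X}$. Now for \emph{any} rearrangement‑invariant space $B$ with $M_{Y_X}\hookrightarrow B$, the bound $\int_0^t f^*\le t/\varphi(t)\le\int_0^t v$ for $\|f\|_{M_{Y_X}}\le1$ together with the Hardy lemma \eqref{prel:hardy-lemma} and the norm identity \eqref{prel:assocnormwithstars} yields
\[
\sup_{\|f\|_{M_{Y_X}}\le1}\|f^*\chi_{(0,a)}\|_{B(0,\nu(\overline{\Omega}))}\le\|v\chi_{(0,a)}\|_{B(0,\nu(\overline{\Omega}))};
\]
if in addition $B$ has absolutely continuous norm then the right‑hand side tends to $0$ (as $v\in B$), whence $M_{Y_X}\overset{*}{\hookrightarrow}B$ and therefore $Y_X\overset{*}{\hookrightarrow}B$. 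Now assume $M_{Y_X}\hookrightarrow\Lambda_Y$. Since $\varphi_Y(0^+)=0$, the Lebesgue--Stieltjes measure $\d\widetilde{\varphi_Y}$ has no atom at $0$, so $\Lambda_Y$ has absolutely continuous norm; taking $B=\Lambda_Y$ above and using $\Lambda_Y\hookrightarrow Y$ gives $Y_X\overset{*}{\hookrightarrow}Y$ --- a contradiction.

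\emph{Third bullet.} One would like to rerun the previous argument with $B=Y$, but this needs $M_{Y_X}\hookrightarrow Y$ (equivalently $v\in Y$), which is \emph{not} granted by \hyperref[q:noncompactfundamentallybigger1]{Question~\ref*{q:noncompactfundamentallybigger1}} --- only $Y_X\hookrightarrow Y$ is --- and in fact can fail. So one must exploit the finer structure of the optimal range space. Assuming $Y$ has absolutely continuous norm, the aim is to interpose a rearrangement‑invariant space $Z$ with $Y_X\hookrightarrow Z$, with fundamental function comparable to $\varphi$, of absolutely continuous norm, and --- crucially --- with $Z\overset{*}{\hookrightarrow}Y$; then $Y_X\hookrightarrow Z\overset{*}{\hookrightarrow}Y$ forces $Y_X\overset{*}{\hookrightarrow}Y$, the desired contradiction. (Equivalently, one may work with the reformulation that $g_k\to0$ $\nu$‑a.e.\ and $\sup_k\|g_k\|_{Y_X}<\infty$ imply $\|g_k\|_Y\to0$, split each $g_k$ by its level sets, dispatch the bounded part by dominated convergence in $Y$ using absolute continuity of the norm, and handle the unbounded part --- which lives on sets of small measure and is bounded in $Y_X$ --- through the intermediate space.) Constructing $Z$ is where I expect the real difficulty to lie: it must draw both on the gap between $\varphi$ and $\varphi_Y$ supplied by the third bullet (so that the discrepancy can be absorbed in the passage from $Z$ to $Y$) and on the explicit description of $Y_X$ furnished by the reduction principle of \citep{CPS:20} under the standing restrictions on $m$, $d$ and $X$ (which in particular keep $Y_X$ away from $L^1$ and provide it with a mildly larger companion of absolutely continuous norm). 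Everything else --- the rearrangement estimates, the Hardy lemma, and the translation between almost‑compact embeddings and shrinking‑support norm bounds --- is routine.
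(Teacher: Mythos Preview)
Your arguments for the first two bullets are correct and essentially rederive, by hand, the content of \cite[Corollary~7.5]{Sla:12} and \cite[Corollary~7.3]{Sla:12} that the paper simply cites. Your extremal-function lemma (build $v=w'$ with $w$ the least concave majorant of $t/\varphi(t)$, then use Hardy's lemma) is exactly the mechanism behind those corollaries, so this part is fine, just more self-contained than the paper's version.

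The third bullet, however, rests on a misreading of the hypotheses. You write that ``$M_{Y_X}\hookrightarrow Y$ \dots\ is \emph{not} granted by \hyperref[q:noncompactfundamentallybigger1]{Question~\ref*{q:noncompactfundamentallybigger1}} --- only $Y_X\hookrightarrow Y$ is,'' but this is false: the second condition in \hyperref[q:noncompactfundamentallybigger1]{Question~\ref*{q:noncompactfundamentallybigger1}} is precisely $M_{Y_X}(\overline{\Omega},\nu)\subsetneq Y(\overline{\Omega},\nu)$, so $M_{Y_X}\hookrightarrow Y$ \emph{is} a standing hypothesis (you may be thinking of \hyperref[q:noncompactfundamentallybigger2]{Question~\ref*{q:noncompactfundamentallybigger2}}, which only requires $Y_X\subsetneq Y$). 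Once you use this, your own lemma applied with $B=Y$ finishes the job immediately: if $Y$ had absolutely continuous norm, then $M_{Y_X}\hookrightarrow Y$ together with $v\in M_{Y_X}\subseteq Y$ gives $\|v\chi_{(0,a)}\|_{Y}\to0$, hence $M_{Y_X}\overset{*}{\hookrightarrow}Y$ and so $Y_X\overset{*}{\hookrightarrow}Y$, a contradiction. This is exactly what the paper does (citing \cite[Theorem~7.2]{Sla:12} for the same implication). The elaborate plan to interpose an auxiliary space $Z$ via the reduction principle is unnecessary, and since you yourself flag that construction as the ``real difficulty'' you cannot carry out, your proof of the third bullet is, as written, incomplete.
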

\begin{proof}
If $Y(\overline{\Omega}, \nu)$ were a Marcinkiewicz space, we would get $M_{Y_X}(\overline{\Omega}, \nu)\stackrel{*}{\hookrightarrow}Y(\overline{\Omega}, \nu)$ by~\cite[Corollary~7.5]{Sla:12} and, consequently, $W^{m}X(\Omega)\hookrightarrow\hookrightarrow Y(\overline{\Omega}, \nu)$ due to~\cite[Theorem~4.1]{CM:20}.

If $M_{Y_X}(\overline{\Omega}, \nu)\hookrightarrow \Lambda_{Y}(\overline{\Omega}, \nu)$, then \cite[Corollary~7.3]{Sla:12} would imply that $M_{Y_X}(\overline{\Omega}, \nu)\stackrel{*}{\hookrightarrow}\Lambda_{Y}(\overline{\Omega}, \nu)$. If $Y(\overline{\Omega}, \nu)$ had absolutely continuous norm, then $M_{Y_X}(\overline{\Omega}, \nu)\hookrightarrow Y(\overline{\Omega}, \nu)$ coupled with~\cite[Theorem~7.2]{Sla:12} would imply that $M_{Y_X}(\overline{\Omega}, \nu)\stackrel{*}{\hookrightarrow}\Lambda_{Y}(\overline{\Omega}, \nu)$. Either way, we would obtain from $M_{Y_X}(\overline{\Omega}, \nu)\stackrel{*}{\hookrightarrow}\Lambda_{Y}(\overline{\Omega}, \nu)$ that $W^{m}X(\Omega)\hookrightarrow\hookrightarrow\Lambda_{Y}(\overline{\Omega}, \nu)\hookrightarrow Y(\overline{\Omega}, \nu)$ thanks to \cite[Theorem~4.1]{CM:20} and \eqref{prel:endpoints}.
\end{proof}

While the preceding proposition limits what a potential space $Y(\overline{\Omega}, \nu)$ sought in \hyperref[q:noncompactfundamentallybigger1]{Question~\ref*{q:noncompactfundamentallybigger1}} can be,  there is also a natural restriction on the space $Y_X(\overline{\Omega}, \nu)$ should the answer to \hyperref[q:noncompactfundamentallybigger1]{Question~\ref*{q:noncompactfundamentallybigger1}} be positive. It turns out that $Y_X(\overline{\Omega}, \nu)$ cannot be a Lorentz endpoint space. Indeed, suppose that $Y_X(\overline{\Omega}, \nu)=\Lambda_{Y_X}(\overline{\Omega}, \nu)$ and that $Y(\overline{\Omega}, \nu)$ is any rearrangement-invariant space satisfying $\lim\limits_{t\to0^+}\frac{\varphi_Y(t)}{\varphi_{Y_X}(t)}=0$. Then $\Lambda_{Y_X}(\overline{\Omega}, \nu)\overset{*}{\hookrightarrow}\Lambda_{Y}(\overline{\Omega}, \nu)\hookrightarrow Y(\overline{\Omega}, \nu)$ thanks to \cite[Corollary~7.5]{Sla:12} and \eqref{prel:endpoints}; hence $W^{m}X(\Omega)\hookrightarrow\hookrightarrow\Lambda_{Y}(\overline{\Omega}, \nu)\hookrightarrow Y(\overline{\Omega}, \nu)$ by virtue of \cite[Theorem~4.1]{CM:20}.

Nevertheless, if $Y_X(\overline{\Omega}, \nu)=M_{Y_X}(\overline{\Omega}, \nu)$ is a Marcinkiewicz space, we can (at least for a lot of customary choices of $X(\Omega)$) explicitly construct a space $Y(\overline{\Omega}, \nu)$ giving an affirmative answer to \hyperref[q:noncompactfundamentallybigger1]{Question~\ref*{q:noncompactfundamentallybigger1}}. Before doing that, we prove a general theorem, which provides a guideline on how to construct such spaces.

\begin{theorem}\label{thm:partanswto:noncompactfundamentallybigger1}
Assume that $\varphi\colon[0,a]\rightarrow[0,\infty)$, where $a\in(0,\infty)$, is a quasiconcave function satisfying
\begin{equation}
\frac1{t}\int_0^t\frac1{\varphi(s)}\,\d s\lesssim\frac1{\varphi(t)}\quad\text{for every $t\in(0,a)$}.\label{thm:partanswto:noncompactfundamentallybigger1:conB}
\end{equation}
Let $\tau$ be a positive, measurable function on $(0,a]$ such that
\begin{align}
\int_0^t\frac{\varphi(s)}{\tau(s)}\,\d s&\lesssim\varphi(t)\quad\text{for every $t\in(0,a)$},\label{thm:partanswto:noncompactfundamentallybigger1:conintphiovertau}\\
\int_t^a\frac1{\tau(s)}\,\d s&<\infty\quad\text{for every $t\in(0,a)$},\label{thm:partanswto:noncompactfundamentallybigger1:confunbfinite}\\
\int_0^a\frac1{\tau(s)}\,\d s&=\infty.\label{thm:partanswto:noncompactfundamentallybigger1:confunbblowsup}
\end{align}
Furthermore, assume that the function $b\colon(0,a]\to(0,\infty)$ defined as
\begin{equation*}
b(t)=1+\int_t^a\frac1{\tau(s)}\,\d s,\ t\in(0,a],
\end{equation*}
is slowly varying on $(0,a]$, and that
\begin{equation}\label{thm:partanswto:noncompactfundamentallybigger1:interepreoftauoverphi}
\frac{\tau(t)}{\varphi(t)}\approx\int_0^t\xi(s)\,\d s,\quad\text{for every $t\in(0,a)$},
\end{equation}
where $\xi$ is some positive, continuous function on $(0,a]$ that is equivalent to a nonincreasing function. Finally, assume that
\begin{equation}\label{thm:partanswto:noncompactfundamentallybigger1:supremumopbounded}
\frac1{\xi(t)}\int_0^t\xi(s)b(s)\,\d s\lesssim\int_0^tb(s)\,\d s\quad\text{for every $t\in(0,a)$}.
\end{equation}

Let $(R, \mu)$ be a nonatomic measure space such that $\mu(R)=a$. Set
\begin{equation}\label{thm:partanswto:noncompactfundamentallybigger1:normY}
\varrho_Y(f)=\sup\limits_{t\in(0,a)}\frac1{b(t)}\int_t^af^*(s)\frac{\varphi(s)}{\tau(s)}\,\d s,\ f\in\Mpl(R, \mu).
\end{equation}
The functional $\varrho_Y$ is equivalent to a rearrangement-invariant function norm on $(R, \mu)$. Let $Y(R, \mu)$ denote the corresponding rearrangement-invariant space. We have that $M_\varphi(R, \mu)\hookrightarrow Y(R, \mu)$ and $\lim\limits_{t\to0^+}\frac{\varphi_Y(t)}{\varphi(t)}=0$, but the embedding is not almost compact.
\end{theorem}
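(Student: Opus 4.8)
The plan is to verify, in turn, that $M_\varphi(R,\mu)\hookrightarrow Y(R,\mu)$, that $\lim_{t\to0^+}\varphi_Y(t)/\varphi(t)=0$, that the embedding just mentioned is not almost compact, and finally that $\varrho_Y$ is equivalent to a rearrangement-invariant function norm (so that $Y$ is a bona fide rearrangement-invariant space); the first three steps are short, the last is the technical core. Throughout one may pass to the representation space and regard $\varrho_Y$ as acting on nonincreasing functions on $(0,a)$. For the embedding: if $\|f\|_{M_\varphi}\le1$ then $f^*(s)\le f^{**}(s)\le1/\varphi(s)$ for every $s$, hence for every $t\in(0,a)$
\[
\frac1{b(t)}\int_t^af^*(s)\frac{\varphi(s)}{\tau(s)}\,\d s\le\frac1{b(t)}\int_t^a\frac{\d s}{\tau(s)}=\frac{b(t)-1}{b(t)}\le1,
\]
and passing to the supremum over $t$ gives $\varrho_Y(f)\le\|f\|_{M_\varphi}$, i.e.\ $M_\varphi\hookrightarrow Y$.

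For the fundamental function, since $(\chi_E)^*=\chi_{(0,t)}$ when $\mu(E)=t$, one computes, for $t\in(0,a)$, $\varphi_Y(t)=\sup_{r\in(0,t)}\frac1{b(r)}\int_r^t\frac{\varphi(s)}{\tau(s)}\,\d s$; bounding $\int_r^t\le\int_0^t$, invoking \eqref{thm:partanswto:noncompactfundamentallybigger1:conintphiovertau}, and using that $b$ is nonincreasing (so $b(r)\ge b(t)$ when $r<t$) yields $\varphi_Y(t)\lesssim\varphi(t)/b(t)$; in particular $\varphi_Y$ is finite-valued and $\varphi_Y(t)/\varphi(t)\lesssim1/b(t)\to0$ by \eqref{thm:partanswto:noncompactfundamentallybigger1:confunbblowsup}. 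To see that the embedding $M_\varphi\hookrightarrow Y$ is not almost compact, the decisive test functions are truncations of $1/\varphi$ near the origin. Condition \eqref{thm:partanswto:noncompactfundamentallybigger1:conB} together with the monotonicity of $\varphi$ gives $\int_0^s\varphi^{-1}\approx s/\varphi(s)$, and then quasiconcavity of $\varphi$ shows that $C:=\sup_{0<r<a}\|\varphi^{-1}\chi_{(0,r)}\|_{M_\varphi}<\infty$. Put $E_k=(0,a/k)$ and $h_k=C^{-1}\varphi^{-1}\chi_{E_k}$, so that $\|h_k\|_{M_\varphi}\le1$ and $h_k$ is nonincreasing and supported in $E_k$; then
\[
\|h_k\chi_{E_k}\|_Y=\varrho_Y(h_k)=\frac1C\sup_{t\in(0,a/k)}\frac1{b(t)}\int_t^{a/k}\frac{\d s}{\tau(s)}=\frac1C\sup_{t\in(0,a/k)}\frac{b(t)-b(a/k)}{b(t)}=\frac1C,
\]
the last equality because $b(t)\to\infty$ as $t\to0^+$ by \eqref{thm:partanswto:noncompactfundamentallybigger1:confunbblowsup}. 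Since $\chi_{E_k}\to0$ a.e.\ while $\sup_{\|f\|_{M_\varphi}\le1}\|f\chi_{E_k}\|_Y\ge C^{-1}>0$ for every $k$, the embedding fails to be almost compact, despite the fundamental-function decay.

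It remains to prove that $\varrho_Y$ is equivalent to a rearrangement-invariant function norm, which I expect to be the main difficulty. Property (P6) holds because $\varrho_Y$ depends only on $f^*$; positive homogeneity, (P2) and (P3) follow from the monotonicity of $f\mapsto f^*$, monotone convergence and the supremum structure; (P4) is the finiteness of $\varphi_Y$ just proved; and (P5) follows by estimating the term of the supremum in \eqref{thm:partanswto:noncompactfundamentallybigger1:normY} at level $t$ from below on the subinterval $(t,2t)$, which bounds $\int_0^tf^*$ by a $t$-dependent constant times $\varrho_Y(f)$. The subtle point is the triangle inequality. Using \eqref{thm:partanswto:noncompactfundamentallybigger1:interepreoftauoverphi} one may write $\varphi(s)/\tau(s)\approx1/\Xi(s)$ with $\Xi(s)=\int_0^s\xi$ nondecreasing, so $\varphi/\tau$ is equivalent to a nonincreasing function; combining $(f+g)^*(2s)\le f^*(s)+g^*(s)$ with a change of variables and the slow variation of $b$ (which gives $b(t/2)\lesssim b(t)$) shows readily that $\varrho_Y$ is subadditive up to a multiplicative constant. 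To obtain an honest equivalent norm, the plan is to show $\varrho_Y\approx\varrho_Y^{**}$, where $\varrho_Y^{**}(f)=\sup_t\frac1{b(t)}\int_t^af^{**}(s)\frac{\varphi(s)}{\tau(s)}\,\d s$, and to observe that $\varrho_Y^{**}$ is genuinely subadditive (because $(f+g)^{**}\le f^{**}+g^{**}$) and inherits (P2)–(P6) from the equivalence, so that $\varrho_Y^{**}$ is a rearrangement-invariant function norm defining $Y$.

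The inequality $\varrho_Y\le\varrho_Y^{**}$ is trivial; the reverse inequality $\varrho_Y^{**}\lesssim\varrho_Y$ is the place where condition \eqref{thm:partanswto:noncompactfundamentallybigger1:supremumopbounded} is used, and it is the main obstacle. The strategy is: first extract from $\varrho_Y(f)$ a pointwise majorant of $f^*$ by testing $\varrho_Y$ against intervals of the form $(s,2s)$ and using $\int_s^{2s}\varphi/\tau\approx s/\Xi(s)$, then integrate this majorant to bound $\int_0^tf^*$ in terms of $\varrho_Y(f)$ and an integral of the shape $\int_0^t\Xi(u)b(u)\,\d u/u$; rewriting through $\xi$ and invoking \eqref{thm:partanswto:noncompactfundamentallybigger1:supremumopbounded} to replace $\int_0^t b\xi$ by $\lesssim\xi(t)\int_0^t b$, the slow variation of $b$ to replace $\int_0^t b$ by $\approx tb(t)$, and the inequality $t\xi(t)\lesssim\Xi(t)$ (valid since $\xi$ is equivalent to a nonincreasing function), one can control $\int_t^af^{**}\varphi/\tau$ by $b(t)\,\varrho_Y(f)$ uniformly in $t$, which is exactly $\varrho_Y^{**}\lesssim\varrho_Y$. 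The degenerate case in which $\int_0^tf^*\varphi/\tau$ is infinite is handled by truncating $f$ and passing to the limit by monotone convergence. I expect essentially all of the effort in the proof to be concentrated in this Hardy-type chain, above all in keeping track of the multiplicative constants supplied by \eqref{thm:partanswto:noncompactfundamentallybigger1:interepreoftauoverphi}, \eqref{thm:partanswto:noncompactfundamentallybigger1:supremumopbounded} and the slow variation of $b$; the remainder of the argument is soft.
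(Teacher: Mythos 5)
Your route is genuinely different from the paper's: for the hardest part you propose to prove $\varrho_Y\approx\varrho_Y^{**}$ directly by a Hardy-type argument, whereas the paper identifies $\varrho_Y$ with a dual-type functional built from the norm of $L^{1,1;b}(0,a)$ and the supremum operator $T_\xi h(t)=\xi(t)\sup_{u\in[t,a)}\xi(u)^{-1}h^*(u)$, deducing subadditivity from the boundedness of $T_\xi$ on $L^{1,1;b}(0,a)$, which is where \eqref{thm:partanswto:noncompactfundamentallybigger1:supremumopbounded} enters for them. Your treatments of $M_\varphi\hookrightarrow Y$, of $\varphi_Y(t)\lesssim\varphi(t)/b(t)$, and of the failure of almost compactness are correct and, if anything, more economical than the paper's (your test functions $C^{-1}\varphi^{-1}\chi_{(0,a/k)}$ go straight to the definition of almost compactness, while the paper tests uniform absolute continuity of the family $f_k^*=\varphi(1/k)^{-1}\chi_{(0,1/k)}+\varphi^{-1}\chi_{[1/k,a)}$).

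However, the final assembly of the key inequality $\varrho_Y^{**}\lesssim\varrho_Y$ does not close as written. Writing $\Xi(t)=\int_0^t\xi(s)\,\d s$, your chain correctly yields $\int_0^tf^*\lesssim\Xi(t)b(t)\,\varrho_Y(f)$; but if you then insert the resulting bound $f^{**}(s)\frac{\varphi(s)}{\tau(s)}\lesssim\frac{b(s)}{s}\varrho_Y(f)$ into the integral over $(t,a)$, you obtain $\varrho_Y(f)\int_t^ab(s)s^{-1}\,\d s$, and $\int_t^ab(s)s^{-1}\,\d s$ is \emph{not} $\lesssim b(t)$ for slowly varying $b$ (for $b=\ell$ it is $\approx\ell(t)^2$). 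The step is repairable: write $\int_t^af^{**}(s)\frac{\varphi(s)}{\tau(s)}\,\d s\approx\int_t^a\frac{1}{s\Xi(s)}\int_0^sf^*\,\d s$, split $\int_0^sf^*=\int_0^tf^*+\int_t^sf^*$ and apply Fubini, so that the head term is $\int_0^tf^*\cdot\int_t^a\frac{\d s}{s\Xi(s)}$ and the tail term is $\int_t^af^*(u)\int_u^a\frac{\d s}{s\Xi(s)}\,\d u$. Both are handled once one observes that \eqref{thm:partanswto:noncompactfundamentallybigger1:supremumopbounded}, together with the monotonicity and slow variation of $b$, forces the \emph{reverse} of your inequality $t\xi(t)\lesssim\Xi(t)$, namely $\Xi(t)\le b(t)^{-1}\int_0^t\xi b\lesssim b(t)^{-1}\xi(t)\int_0^tb\approx t\xi(t)$; hence $\Xi(t)\approx t\xi(t)$ and $\int_t^a\frac{\d s}{s\Xi(s)}\approx\int_t^a\frac{\xi(s)}{\Xi(s)^2}\,\d s\le\frac{1}{\Xi(t)}$. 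With this, the head term is $\lesssim b(t)\varrho_Y(f)$ by your bound on $\int_0^tf^*$, and the tail term is $\lesssim\int_t^af^*\frac{\varphi}{\tau}\le b(t)\varrho_Y(f)$. So your plan is viable and more elementary than the paper's, but the Fubini decomposition and the identity $\Xi(t)\approx t\xi(t)$ are missing ideas without which your last step fails.
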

\begin{proof}
We start off by showing that the functional $\varrho_Y$, defined by \eqref{thm:partanswto:noncompactfundamentallybigger1:normY}, is equivalent to a rearrangement-invariant function norm. We claim that $\varrho_Y$ is equivalent to a functional $\sigma$ defined as
\begin{equation}\label{thm:partanswto:noncompactfundamentallybigger1:defsigma}
\sigma(f)=\sup_{\|g\|_{L^{1,1;b}(0,a)}\leq1}\int_0^af^*(t)\frac1{\int_0^t\xi(u)\,\d u}\int_0^t\xi(s)\sup_{u\in[s,a)}\frac1{\xi(u)}g^*(u)\,\d s\,\d t,\ f\in\Mpl(R, \mu),
\end{equation}
and that $\sigma$ is a rearrangement-invariant function norm. As for the equivalence, it follows from \citep[Theorem~3.33]{P} and the Hardy--Littlewood inequality \eqref{prel:HL} that
\begin{equation*}
\varrho_Y(f)=\left\|\int_t^af^*(s)\frac{\varphi(s)}{\tau(s)}\,\d s\right\|_{L^{\infty,\infty;b^{-1}}(0,a)}\approx\sup\limits_{\|g\|_{L^{1,1;b}(0,a)}\leq1}\int_0^ag^*(t)\int_t^af^*(s)\frac{\varphi(s)}{\tau(s)}\,\d s\,\d t,
\end{equation*}
where $b^{-1}=\frac1{b}$. Hence, thanks to Fubini's theorem and \eqref{thm:partanswto:noncompactfundamentallybigger1:interepreoftauoverphi},
\begin{equation}\label{thm:partanswto:noncompactfundamentallybigger1:dualexprrho}
\begin{aligned}
\varrho_Y(f)&\approx\sup\limits_{\|g\|_{L^{1,1;b}(0,a)}\leq1}\int_0^af^*(s)\frac{\varphi(s)}{\tau(s)}\int_0^sg^*(t)\,\d t\,\d s\\
&\approx\sup\limits_{\|g\|_{L^{1,1;b}(0,a)}\leq1}\int_0^af^*(s)\frac1{\int_0^s\xi(u)\,\d u}\int_0^sg^*(t)\,\d t\,\d s.
\end{aligned}
\end{equation}
It plainly follows from \eqref{thm:partanswto:noncompactfundamentallybigger1:defsigma} and \eqref{thm:partanswto:noncompactfundamentallybigger1:dualexprrho} that
\begin{equation}\label{thm:partanswto:noncompactfundamentallybigger1:rholesssigma}
\varrho_Y(f)\lesssim\sigma(f).
\end{equation}
As for the opposite inequality, we need to introduce the supremum operator $T_\xi$ defined as, for every fixed $h\in\M(0,a)$,
\begin{equation*}
T_\xi h(t)=\xi(t)\sup_{u\in[t,a)}\frac1{\xi(u)}h^*(u),\ t\in(0,a).
\end{equation*}
Note that, for each $h\in\M(0,a)$, $T_\xi h$ is equivalent to a nonincreasing function on $(0,a)$. Furthermore, observe that assumption \eqref{thm:partanswto:noncompactfundamentallybigger1:supremumopbounded} guarantees that the operator is bounded on the rearrangement-invariant space $L^{1,1;b}(0,a)$ (cf.~\citep[Theorem~3.35]{P}) owing to \citep[Theorem~3.2]{GOP}. Hence, thanks to \eqref{thm:partanswto:noncompactfundamentallybigger1:defsigma}, Fubini's theorem, \citep[Theorem~3.32]{P} combined with H\"older's inequality \eqref{prel:holder}, and \eqref{thm:partanswto:noncompactfundamentallybigger1:interepreoftauoverphi},
\begin{equation}\label{thm:partanswto:noncompactfundamentallybigger1:sigmalessrho}
\begin{aligned}
\sigma(f)&=\sup_{\|g\|_{L^{1,1;b}(0,a)}\leq1}\int_0^af^*(s)\frac1{\int_0^s\xi(u)\,\d u}\int_0^sT_\xi g(t)\,\d t\,\d s\\
&=\sup_{\|g\|_{L^{1,1;b}(0,a)}\leq1}\int_0^aT_\xi g(t)\int_t^af^*(s)\frac1{\int_0^s\xi(u)\,\d u}\,\d s\,\d t\\
&\approx\sup_{\|g\|_{L^{1,1;b}(0,a)}\leq1}\int_0^aT_\xi g(t)\int_t^af^*(s)\frac{\varphi(s)}{\tau(s)}\,\d s\,\d t\\
&\lesssim\left\|\int_t^af^*(s)\frac{\varphi(s)}{\tau(s)}\,\d s\right\|_{L^{\infty,\infty;b^{-1}}(0,a)}\sup_{\|g\|_{L^{1,1;b}(0,a)}\leq1}\|T_\xi g\|_{L^{1,1;b}(0,a)}\\
&\lesssim\varrho_Y(f).
\end{aligned}
\end{equation}
Combining \eqref{thm:partanswto:noncompactfundamentallybigger1:rholesssigma} and \eqref{thm:partanswto:noncompactfundamentallybigger1:sigmalessrho}, we obtain the desired equivalence of $\varrho_Y$ and $\sigma$. It remains to show that the functional $\sigma$ is a rearrangement-invariant function norm. We shall only prove that $\sigma$ is subadditive and that
\begin{equation}\label{thm:partanswto:noncompactfundamentallybigger1:sigmalocembintol1}
\int_R f(x)\,\d\mu(x)\lesssim\sigma(f)\quad\text{for every $f\in\Mpl(R, \mu)$}
\end{equation}
because it can be readily verified that $\sigma$ possesses all of the other properties of a rearrangement-invariant function norm. Let $f_1,f_2\in\Mpl(R, \mu)$ be given. Being the integral mean of the nonincreasing function $(0,a)\ni s\mapsto \sup\limits_{u\in[s,a)}\frac1{\xi(u)}g^*(u)$, where $g\in L^{1,1;b}(0,a)$ is a fixed function, over the interval $(0,t)$ with respect to the measure $\xi(s)\,\d s$, the function $(0,a)\ni t\mapsto \frac1{\int_0^t\xi(u)\,\d u}\int_0^tT_\xi g(s)\,\d s$ is nonincreasing on $(0,a)$, and so, thanks to Hardy's lemma \eqref{prel:hardy-lemma}, we have that
\begin{align*}
\sigma(f_1+f_2)&=\sup_{\|g\|_{L^{1,1;b}(0,a)}\leq1}\int_0^a\left(f_1+f_2\right)^*(t)\frac1{\int_0^t\xi(u)\,\d u}\int_0^tT_\xi g(s)\,\d s\,\d t\\
&\leq\sup_{\|g\|_{L^{1,1;b}(0,a)}\leq1}\int_0^af_1^*(t)\frac1{\int_0^t\xi(u)\,\d u}\int_0^tT_\xi g(s)\,\d s\,\d t\\
&\quad+ \sup_{\|g\|_{L^{1,1;b}(0,a)}\leq1}\int_0^af_2^*(t)\frac1{\int_0^t\xi(u)\,\d u}\int_0^tT_\xi g(s)\,\d s\,\d t\\
&=\sigma(f_1)+\sigma(f_2).
\end{align*}
Hence $\sigma$ is subadditive. Let $f\in\Mpl(R, \mu)$ be given. Exploiting again the fact that the function $(0,a)\ni t\mapsto \frac1{\int_0^t\xi(u)\,\d u}\int_0^tT_\xi g(s)\,\d s$ is, for every fixed $g\in\M(0,a)$, nonincreasing on $(0,a)$, we have that
\begin{align*}
\sigma(f)&=\sup_{\|g\|_{L^{1,1;b}(0,a)}\leq1}\int_0^af^*(t)\frac1{\int_0^t\xi(u)\,\d u}\int_0^tT_\xi g(s)\,\d s\,\d t\\
&\gtrsim\sup_{\|g\|_{L^{1,1;b}(0,a)}\leq1}\frac1{\int_0^a\xi(u)\,\d u}\int_0^aT_\xi g(s)\,\d s\int_0^af^*(t)\,\d t\\
&\geq\frac1{\|\chi_{(0,a)}\|_{L^{1,1;b}(0,a)}}\frac1{\int_0^a\xi(u)\,\d u}\int_0^aT_\xi \chi_{(0,a)}(s)\,\d s\int_0^af^*(t)\,\d t\\
&\approx\frac1{\|\chi_{(0,a)}\|_{L^{1,1;b}(0,a)}}\frac1{\int_0^a\xi(u)\,\d u}\int_0^a\frac{\xi(s)}{\xi(a)}\,\d s\int_0^af^*(t)\,\d t\\
&=\frac1{\|\chi_{(0,a)}\|_{L^{1,1;b}(0,a)}}\frac1{\xi(a)}\int_0^af^*(t)\,\d t\\
&\geq\frac1{\|\chi_{(0,a)}\|_{L^{1,1;b}(0,a)}}\frac1{\xi(a)}\int_Rf(x)\,\d\mu(x),
\end{align*}
where the last inequality is true thanks to Hardy-Littlewood inequality \eqref{prel:HL}. Hence \eqref{thm:partanswto:noncompactfundamentallybigger1:sigmalocembintol1} holds.

Now that we know that $Y(R, \mu)$ is equivalent to a rearrangement-invariant space, we turn our attention to its relation to the Marcinkiewicz space $M_\varphi(R, \mu)$. Note that $M_\varphi(R, \mu)\hookrightarrow Y(R, \mu)$. Indeed, since $\varphi$ satisfies \eqref{thm:partanswto:noncompactfundamentallybigger1:conB}, we have that
\begin{equation}\label{thm:partanswto:noncompactfundamentallybigger1:eq1}
\|f\|_{M_\varphi(R, \mu)}\approx\sup\limits_{t\in(0,a)}f^*(t)\varphi(t)\quad\text{for every $f\in\M(R, \mu)$}
\end{equation}
(see \cite[Lemma~2.1]{MusOl:19}); therefore, it is sufficient to verify that $\frac1{\varphi}\in Y(0,a)$, which is easy (recall \eqref{thm:partanswto:noncompactfundamentallybigger1:confunbfinite} and \eqref{thm:partanswto:noncompactfundamentallybigger1:confunbblowsup}).

Next, we claim that
\begin{equation*}
\varphi_Y(t)\lesssim\frac{\varphi(t)}{b(t)}\quad\text{for every $t\in(0,a)$}.
\end{equation*}
Indeed, thanks to \eqref{thm:partanswto:noncompactfundamentallybigger1:conintphiovertau} and the fact that the function $b$ is nonincreasing,
\begin{equation*}
\varphi_Y(t)\approx\sup\limits_{u\in(0,t]}\frac1{b(u)}\int_u^t\frac{\varphi(s)}{\tau(s)}\,\d s\leq\frac1{b(t)}\int_0^t\frac{\varphi(s)}{\tau(s)}\,\d s\lesssim\frac{\varphi(t)}{b(t)}.
\end{equation*}
Hence $\lim\limits_{t\to0^+}\frac{\varphi_Y(t)}{\varphi(t)}=0$ thanks to \eqref{thm:partanswto:noncompactfundamentallybigger1:confunbblowsup}.

Finally, in order to prove that the embedding $M_\varphi(R, \mu)\hookrightarrow Y(R, \mu)$ is not almost compact, we consider functions $f_k\in \Mpl(R, \mu)$, $k\in\N$, such that
\begin{equation*}
f^*_k(t)=\frac1{\varphi(\frac1{k})}\chi_{\left(0,\frac1{k}\right)}(t) + \frac1{\varphi(t)}\chi_{\left[\frac1{k},a\right)}(t),\ t\in(0,a).
\end{equation*}
Note that the set $M=\{f_k\colon k\in\N\}$ is bounded in $M_\varphi(R, \mu)$, for, thanks to \eqref{thm:partanswto:noncompactfundamentallybigger1:eq1},
\begin{equation*}
\|f_k\|_{M_\varphi(R, \mu)}\approx\max\left\{\frac1{\varphi(\frac1{k})}\sup_{t\in(0,\frac1{k})}\varphi(t), \sup_{t\in(\frac1{k},a)}\frac1{\varphi(t)}\varphi(t)\right\}=1,
\end{equation*}
where the equivalence constants are independent of $k$. However, $M$ does not have uniformly absolutely continuous norm in $Y(R, \mu)$. Indeed, let $\delta\in(0,a)$. Owing to \eqref{thm:partanswto:noncompactfundamentallybigger1:confunbblowsup} and \eqref{thm:partanswto:noncompactfundamentallybigger1:confunbfinite}, we can find $k\in\N$ large enough that
\begin{align}
\frac1{k}&<\delta,\nonumber\\
b\left(\frac1{k}\right)&\leq2\int_{\frac1{k}}^a\frac1{\tau(s)}\,\d s,\label{thm:partanswto:noncompactfundamentallybigger1:eq3}\\
\int_{\frac1{k}}^a\frac1{\tau(s)}\,\d s&\geq2\int_\delta^a\frac1{\tau(s)}\,\d s.\label{thm:partanswto:noncompactfundamentallybigger1:eq4}
\end{align}
Hence
\begin{equation}\label{thm:partanswto:noncompactfundamentallybigger1:eq5}
\|f^*_k\chi_{(0,\delta)}\|_{Y(0,a)}\gtrsim\sup_{t\in[\frac1{k},\delta]}\frac1{b(t)}\int_t^\delta\frac1{\tau(s)}\,\d s\geq\frac1{b\left(\frac1{k}\right)}\int_\frac1{k}^\delta\frac1{\tau(s)}\,\d s\geq\frac{1}{2}\frac{\int_\frac1{k}^\delta\frac1{\tau(s)}\,\d s}{\int_\frac1{k}^a\frac1{\tau(s)}\,\d s}\geq\frac{1}{4},
\end{equation}
where the third and last inequality is valid thanks to \eqref{thm:partanswto:noncompactfundamentallybigger1:eq3} and \eqref{thm:partanswto:noncompactfundamentallybigger1:eq4}, respectively. Since $\delta\in(0,a)$ was arbitrary, \eqref{thm:partanswto:noncompactfundamentallybigger1:eq5} implies that $M$ does not have uniformly absolutely continuous norm in $Y(R, \mu)$.
\end{proof}

\begin{remark}\label{rem:partanswto:noncompactfundamentallybigger1}
Since the question of whether a rearrangement-invariant space is (almost compactly) embedded into another one is invariant with respect to equivalently renorming the spaces, \hyperref[thm:partanswto:noncompactfundamentallybigger1]{Theorem~\ref*{thm:partanswto:noncompactfundamentallybigger1}} can be used even when the function $\varphi$ is merely equivalent to a quasiconcave function on $[0,a]$.

Furthermore, since \hyperref[thm:partanswto:noncompactfundamentallybigger1]{Theorem~\ref*{thm:partanswto:noncompactfundamentallybigger1}} has a large number of assumptions, it is worth noting some concrete, important examples of functions that satisfy the assumptions.
\begin{enumerate}[(i)]
\item If $\varphi(t)=t^\alpha\ell(t)^\beta$ with $\alpha\in(0,1)$ and $\beta\in\R$, then we can take $\tau(t)=t$, $b(t)=\ell(t)$, and $\xi(t)=t^{-\alpha}\ell(t)^{-\beta}$.
\item If $\varphi(t)=\ell(t)^\beta$ with $\beta<0$, then we can take $\tau(t)=t\ell(t)$, $b(t)=\ell\ell(t)$, and $\xi(t)=\ell(t)^{1-\beta}$.
\end{enumerate}
In both cases (i) and (ii), the function $\varphi$ is equivalent to a quasiconcave function on $[0,a]$, and $\varphi$ together with the functions $\tau$, $b$ and $\xi$, defined above, satisfies the assumptions of \hyperref[thm:partanswto:noncompactfundamentallybigger1]{Theorem~\ref*{thm:partanswto:noncompactfundamentallybigger1}}. These examples actually illustrate how to use \hyperref[thm:partanswto:noncompactfundamentallybigger1]{Theorem~\ref*{thm:partanswto:noncompactfundamentallybigger1}} when $\varphi$ has the form $\varphi(t)=t^\alpha b(t)$, where $\alpha\in[0,1)$ and $b$ is a slowly-varying function on $(0,a]$, which is the case in many customary situations.
\end{remark}

We now use \hyperref[thm:partanswto:noncompactfundamentallybigger1]{Theorem~\ref*{thm:partanswto:noncompactfundamentallybigger1}} together with \hyperref[rem:partanswto:noncompactfundamentallybigger1]{Remark~\ref*{rem:partanswto:noncompactfundamentallybigger1}} to obtain examples of spaces $Y(\overline{\Omega}, \nu)$ giving a positive answer to \hyperref[q:noncompactfundamentallybigger1]{Question~\ref*{q:noncompactfundamentallybigger1}} in the case where $X(\Omega)$ is a weak Lorentz-Zygmund space. Our approach also outlines a general way that can be used to construct a space $Y(\overline{\Omega}, \nu)$ sought in \hyperref[q:noncompactfundamentallybigger1]{Question~\ref*{q:noncompactfundamentallybigger1}} in the case where $Y_X(\overline{\Omega}, \nu)$ is a Marcinkiewicz space.
\begin{theorem}
Let $\Omega\subseteq\rn$ be a bounded Lipschitz domain, $m\in\N$, $m<n$, $d\in[n-m,n]$ and $\nu$ a $d$-Ahlfors measure on $\overline{\Omega}$. Assume that either $p\in(1,\frac{n}{m})$ and $\alpha\in\R$ or $p=\frac{n}{m}$ and $\alpha\leq1$. There is a rearrangement-invariant space $Y(\overline{\Omega}, \nu)$, whose norm is induced by the function norm defined by \eqref{thm:partanswto:noncompactfundamentallybigger1:defsigma} (and equivalent to \eqref{thm:partanswto:noncompactfundamentallybigger1:normY}) with $a=\nu\left(\overline{\Omega}\right)$,
\begin{align*}
\varphi(t)&\approx\begin{cases}
t^\frac{n-mp}{dp}\ell(t)\quad&\text{if $p\in(1,\frac{n}{m})$},\\
\ell(t)^{\alpha-1}\quad&\text{if $p=\frac{n}{m}$ and $\alpha<1$},\\
\ell\ell(t)^{-1}\quad&\text{if $p=\frac{n}{m}$ and $\alpha=1$},
\end{cases}\\
\tau(t)&=\begin{cases}
t\quad&\text{if $p\in(1,\frac{n}{m})$},\\
t\ell(t)\quad&\text{if $p=\frac{n}{m}$ and $\alpha<1$},\\
t\ell\ell(t)\quad&\text{if $p=\frac{n}{m}$ and $\alpha=1$},
\end{cases}\\
\intertext{and}
b(t)&=\begin{cases}
\ell(t)\quad&\text{if $p\in(1,\frac{n}{m})$},\\
\ell\ell(t)\quad&\text{if $p=\frac{n}{m}$ and $\alpha<1$},\\
\ell\ell\ell(t)\quad&\text{if $p=\frac{n}{m}$ and $\alpha=1$},
\end{cases}
\end{align*}
such that
\begin{itemize}
\item $W^mL^{p,\infty;\alpha}(\Omega)\hookrightarrow Y(\overline{\Omega}, \nu)$ non-compactly,
\item $M_{Y_{L^{p,\infty;\alpha}}}(\overline{\Omega}, \nu)\subsetneq Y(\overline{\Omega}, \nu)$,
\item $\lim\limits_{t\to0^+}\frac{\varphi_Y(t)}{\varphi_{Y_{L^{p,\infty;\alpha}}}(t)}=0$.
\end{itemize}
\end{theorem}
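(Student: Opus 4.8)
The plan is to realise $Y(\overline{\Omega}, \nu)$ as the rearrangement-invariant space produced by Theorem~\ref{thm:partanswto:noncompactfundamentallybigger1}, applied on the measure space $(\overline{\Omega}, \nu)$ — which is nonatomic because $\nu$ is $d$-Ahlfors and has finite total mass $a=\nu(\overline{\Omega})$ — with $\varphi$, $\tau$, $b$ exactly as in the statement, and then to extract the three asserted properties by playing the conclusion of that theorem against the explicit form of the optimal target $Y_X(\overline{\Omega}, \nu)$, $X=L^{p,\infty;\alpha}(\Omega)$, and against the compactness criterion \citep[Theorem~4.1]{CM:20}. As a preliminary, note that $X\not\subseteq L^{\frac{n}{m},1}(\Omega)$ under the stated hypotheses: when $p<\frac{n}{m}$ this is immediate from $L^p(\Omega)\subseteq X$ together with $L^p(\Omega)\not\subseteq L^{\frac{n}{m},1}(\Omega)$, and when $p=\frac{n}{m}$ it is precisely the condition $\alpha\le1$. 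Consequently $Y_X(\overline{\Omega}, \nu)\neq L^\infty(\overline{\Omega}, \nu)$ by \citep[Theorem~3.1]{CPS_OrlLor:20}.

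The first real step is to pin down $Y_X(\overline{\Omega}, \nu)$: one feeds $X$ into the reduction principle and into the description of optimal target spaces for Sobolev embeddings with respect to $d$-Ahlfors measures developed in \citep{CPS:20, CPS_OrlLor:20}, and computes $Y_X(\overline{\Omega}, \nu)$ explicitly. The crucial point — whose analogue \emph{fails} for Lorentz endpoint targets, as the discussion preceding the statement shows — is that for the weak Lorentz--Zygmund domain $X$ this optimal target turns out to be, up to equivalence of norms, a \emph{Marcinkiewicz space}, namely $M_\varphi(\overline{\Omega}, \nu)$ with $\varphi$ the function in the statement; in particular $\varphi_{Y_X}\approx\varphi$ and $Y_X(\overline{\Omega}, \nu)=M_\varphi(\overline{\Omega}, \nu)=M_{Y_X}(\overline{\Omega}, \nu)$. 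Establishing this identification — both that $Y_X$ is a Marcinkiewicz space and that its fundamental function is exactly $\varphi$ — is the place where the $d$-Ahlfors machinery has to be combined carefully with the structure of $X$, and I expect it to be the main obstacle.

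Granting this, the remaining steps are essentially bookkeeping. One verifies that $\varphi$, together with the prescribed $\tau$ and $b$ and a suitable positive continuous $\xi$, satisfies all the assumptions of Theorem~\ref{thm:partanswto:noncompactfundamentallybigger1}. For $p\in(1,\frac{n}{m})$ this is exactly case~(i) of Remark~\ref{rem:partanswto:noncompactfundamentallybigger1} with exponent $\frac{n-mp}{dp}\in(0,1)$ — indeed $n-mp>0$ since $mp<n$, while $n-mp<dp$ since $(d+m)p>d+m\ge n$ — taking $\xi(t)=t^{-\frac{n-mp}{dp}}\ell(t)^{-1}$; for $p=\frac{n}{m}$ and $\alpha<1$ it is case~(ii) with $\beta=\alpha-1<0$; and for $p=\frac{n}{m}$, $\alpha=1$ one reruns the computation of case~(ii) with one further level of logarithm. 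In each case $\varphi$ is equivalent to a quasiconcave function on $[0,a]$ and obeys \eqref{thm:partanswto:noncompactfundamentallybigger1:conB}. Theorem~\ref{thm:partanswto:noncompactfundamentallybigger1} then hands us the space $Y(\overline{\Omega}, \nu)$, whose norm is induced by \eqref{thm:partanswto:noncompactfundamentallybigger1:defsigma} (and equivalent to \eqref{thm:partanswto:noncompactfundamentallybigger1:normY}), with $M_\varphi(\overline{\Omega}, \nu)\hookrightarrow Y(\overline{\Omega}, \nu)$ but not almost compactly, and $\lim_{t\to0^+}\varphi_Y(t)/\varphi(t)=0$.

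Finally one reads off the three properties. Since $\varphi_{Y_X}\approx\varphi$, we have $M_{Y_X}(\overline{\Omega}, \nu)=M_\varphi(\overline{\Omega}, \nu)\hookrightarrow Y(\overline{\Omega}, \nu)$; the relation $\lim_{t\to0^+}\varphi_Y(t)/\varphi(t)=0$ forces $\varphi_Y\not\approx\varphi$, hence $Y(\overline{\Omega}, \nu)\neq M_\varphi(\overline{\Omega}, \nu)$, and therefore $M_{Y_X}(\overline{\Omega}, \nu)\subsetneq Y(\overline{\Omega}, \nu)$, while at the same time $\lim_{t\to0^+}\varphi_Y(t)/\varphi_{Y_X}(t)=\lim_{t\to0^+}\varphi_Y(t)/\varphi(t)=0$. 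By the optimal embedding \eqref{E:optimal-r.i.-embedding}, $W^mX(\Omega)\hookrightarrow Y_X(\overline{\Omega}, \nu)=M_\varphi(\overline{\Omega}, \nu)\hookrightarrow Y(\overline{\Omega}, \nu)$, so $W^mX(\Omega)\hookrightarrow Y(\overline{\Omega}, \nu)$. Moreover $\varphi_Y(t)\to0$ as $t\to0^+$ gives $Y(\overline{\Omega}, \nu)\neq L^\infty(\overline{\Omega}, \nu)$; and because $Y_X(\overline{\Omega}, \nu)=M_\varphi(\overline{\Omega}, \nu)$ is a Marcinkiewicz space that is not almost compactly embedded into $Y(\overline{\Omega}, \nu)$, we obtain $Y_X(\overline{\Omega}, \nu)\not\overset{*}{\hookrightarrow}Y(\overline{\Omega}, \nu)$, whence \citep[Theorem~4.1]{CM:20} shows that $W^mX(\Omega)\hookrightarrow Y(\overline{\Omega}, \nu)$ is not compact. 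This last deduction is precisely where the Marcinkiewicz nature of $Y_X$ is indispensable: for a general optimal target $M_{Y_X}\not\overset{*}{\hookrightarrow}Y$ would not by itself yield $Y_X\not\overset{*}{\hookrightarrow}Y$.
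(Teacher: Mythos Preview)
Your proposal is correct and follows essentially the same route as the paper: identify $Y_{L^{p,\infty;\alpha}}(\overline{\Omega},\nu)$ as a Marcinkiewicz space with fundamental function $\approx\varphi$, invoke Theorem~\ref{thm:partanswto:noncompactfundamentallybigger1} (via Remark~\ref{rem:partanswto:noncompactfundamentallybigger1}) to manufacture $Y$, and then apply \citep[Theorem~4.1]{CM:20} for non-compactness. The one place you overestimate the difficulty is the identification step you flag as ``the main obstacle'': the paper simply cites \citep[Theorem~5.1]{CM:20}, which gives $Y_{L^{p,\infty;\alpha}}(\overline{\Omega},\nu)$ explicitly as $L^{\frac{dp}{n-mp},\infty;\alpha}$, $L^{\infty,\infty;\alpha-1}$, or $L^{\infty,\infty;0,-1}$ in the three cases, each of which is visibly a Marcinkiewicz space with the stated $\varphi$, so no fresh computation with the reduction principle is required.
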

\begin{proof}
It is known (see \cite[Theorem~5.1]{CM:20}) that
\begin{equation*}
Y_{L^{p,\infty;\alpha}}(\overline{\Omega}, \nu)=\begin{cases}
L^{\frac{dp}{n-mp},\infty;\alpha}(\overline{\Omega}, \nu)\quad&\text{if $p\in(1,\frac{n}{m})$},\\
L^{\infty, \infty;\alpha-1}(\overline{\Omega}, \nu)\quad&\text{if $p=\frac{n}{m}$ and $\alpha<1$},\\
L^{\infty, \infty;0,-1}(\overline{\Omega}, \nu)\quad&\text{if $p=\frac{n}{m}$ and $\alpha=1$}.
\end{cases}
\end{equation*}
Furthermore, $Y_{L^{p,\infty;\alpha}}(\overline{\Omega}, \nu)$ is equivalent to a Marcinkiewicz space $M_\psi(\overline{\Omega}, \nu)$ with $\psi\approx \varphi$. The claim now follows from \hyperref[thm:partanswto:noncompactfundamentallybigger1]{Theorem~\ref*{thm:partanswto:noncompactfundamentallybigger1}} combined with \citep[Theorem~4.1]{CM:20}.
\end{proof}

\section{On enlarging general rearrangement-invariant spaces}\label{prel:q3}
The following proposition provides a general principle that can be exploited to construct a space $Y(\overline{\Omega}, \nu)$ sought in \hyperref[q:noncompactfundamentallybigger2]{Question~\ref*{q:noncompactfundamentallybigger2}}.
\begin{proposition}\label{prop:paransq2}
Let $\left(R, \mu\right)$ be a finite, nonatomic measure space. Let $Z_1(R, \mu)$ and $Z_2(R, \mu)$ be rearrangement-invariant spaces. Assume that $\lim\limits_{t\to0^+}\frac{\varphi_{Z_2}(t)}{\varphi_{Z_1}(t)}=0$ and $Z_1(R, \mu)\not\subseteq Z_2(R, \mu)$. Set $Y(R, \mu)=\left(Z_1'(R, \mu)\cap Z_2'(R, \mu)\right)'$. We have that $\lim\limits_{t\to0^+}\frac{\varphi_{Y}(t)}{\varphi_{Z_1}(t)}=0$ and $Z_1(R, \mu)\hookrightarrow Y(R, \mu)$, but the embedding is not almost compact.

Moreover, if either of the spaces $Z_1'(R, \mu)$ and $Z_2'(R, \mu)$ has absolutely continuous norm, then $Y(R, \mu)=Z_1(R, \mu)+Z_2(R, \mu)$.
\end{proposition}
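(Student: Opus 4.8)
The plan is to derive every assertion from the identity $Y'=(Z_1'\cap Z_2')''=Z_1'\cap Z_2'$; here $Z_1'\cap Z_2'$ carries the norm $\max\{\|\cdot\|_{Z_1'},\|\cdot\|_{Z_2'}\}$ and is again a rearrangement-invariant space, so that \eqref{prel:X''} applies. From the trivial embedding $Z_1'\cap Z_2'\hookrightarrow Z_1'$ and \eqref{prel:embdual} one gets $Z_1=(Z_1')'\hookrightarrow(Z_1'\cap Z_2')'=Y$, which is the asserted embedding $Z_1\hookrightarrow Y$ (and symmetrically $Z_2\hookrightarrow Y$). Since $\varphi_{Z_1'\cap Z_2'}(t)=\max\{\varphi_{Z_1'}(t),\varphi_{Z_2'}(t)\}$, applying \eqref{prel:fundamentalfuncsidentity} first to $Y$ and then to $Z_1$ and $Z_2$ gives $\varphi_Y(t)=t/\max\{\varphi_{Z_1'}(t),\varphi_{Z_2'}(t)\}=\min\{\varphi_{Z_1}(t),\varphi_{Z_2}(t)\}$; since $\varphi_{Z_2}(t)/\varphi_{Z_1}(t)\to0$, for all small $t$ we have $\varphi_Y(t)=\varphi_{Z_2}(t)$, whence $\varphi_Y(t)/\varphi_{Z_1}(t)\to0$.

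To see that $Z_1\hookrightarrow Y$ is not almost compact I would dualize twice. By \eqref{prel:almostcompactdual}, $Z_1\overset{*}{\hookrightarrow}Y$ is equivalent to $Y'=Z_1'\cap Z_2'\overset{*}{\hookrightarrow}Z_1'$, while by \eqref{prel:embdual} the hypothesis $Z_1\not\subseteq Z_2$ is equivalent to $Z_2'\not\subseteq Z_1'$; so it is enough to show that $Z_2'\not\subseteq Z_1'$ forces that $Z_1'\cap Z_2'$ is not almost-compactly embedded into $Z_1'$. Passing to the representation spaces over $(0,a)$ with $a=\mu(R)$ (embeddings, almost-compact embeddings, and intersections all pass to representation spaces), pick a nonincreasing $h\in Z_2'(0,a)\setminus Z_1'(0,a)$ with $\|h\|_{Z_2'(0,a)}\le1$. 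Since $a<\infty$, $h\chi_{(\delta,a)}$ is bounded and hence lies in $Z_1'(0,a)$ for every $\delta\in(0,a)$, so subadditivity of the norm forces $\|h\chi_{(0,\delta)}\|_{Z_1'(0,a)}=\infty$. Truncating in height and invoking the Fatou property (P3), choose $N_k$ with $g_k:=\min\{h,N_k\}\chi_{(0,1/k)}$ satisfying $\|g_k\|_{Z_1'(0,a)}\ge1$, and put $\tilde g_k=g_k/\|g_k\|_{Z_1'(0,a)}$. Then $\tilde g_k$ belongs to the unit ball of $Z_1'(0,a)\cap Z_2'(0,a)$ (indeed $\|\tilde g_k\|_{Z_2'(0,a)}\le\|g_k\|_{Z_2'(0,a)}\le\|h\|_{Z_2'(0,a)}\le1$ by monotonicity), it is supported in $(0,1/k)$, and $\|\tilde g_k\chi_{(0,1/k)}\|_{Z_1'(0,a)}=\|\tilde g_k\|_{Z_1'(0,a)}=1$. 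As $(0,1/k)\searrow\emptyset$, this shows $\sup\{\|f\chi_{(0,1/k)}\|_{Z_1'(0,a)}:\|f\|_{Z_1'(0,a)\cap Z_2'(0,a)}\le1\}\ge1$ for every $k$, so the almost-compact embedding fails.

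For the ``moreover'' part, $Z_1+Z_2\hookrightarrow Y$ is immediate from $Z_1\hookrightarrow Y$ and $Z_2\hookrightarrow Y$, so the work lies in the reverse embedding. Since $Z_1,Z_2\hookrightarrow Z_1+Z_2$, \eqref{prel:embdual} gives $(Z_1+Z_2)'\hookrightarrow Z_1'\cap Z_2'$ and hence $Y=(Z_1'\cap Z_2')'\hookrightarrow(Z_1+Z_2)''$; thus it suffices to show $(Z_1+Z_2)''=Z_1+Z_2$, that is, to decompose an arbitrary $g$ in the unit ball of $(Z_1'\cap Z_2')'$ as $g=g_1+g_2$ with $g_i\in Z_i$ and $\|g_1\|_{Z_1}+\|g_2\|_{Z_2}\lesssim1$. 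I would obtain this by a Hahn--Banach splitting: the functional $f\mapsto\int_R fg\,\d\mu$ on $Z_1'\cap Z_2'$, carried by the diagonal map into the $\ell^\infty$-sum $Z_1'\oplus Z_2'$ (on whose diagonal the norm is exactly $\|\cdot\|_{Z_1'\cap Z_2'}$), extends to a norm-$\le1$ functional on the whole sum, i.e.\ to a pair $(\psi_1,\psi_2)$ in the $\ell^1$-sum $(Z_1')^*\oplus(Z_2')^*$ with $\|\psi_1\|+\|\psi_2\|\le1$ and $\psi_1(f)+\psi_2(f)=\int_R fg\,\d\mu$ for all $f\in Z_1'\cap Z_2'$. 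If, say, $Z_1'$ has absolutely continuous norm, then its Banach dual coincides with its associate space, so $(Z_1')^*=(Z_1')'=Z_1$ by \eqref{prel:X''} and $\psi_1$ is integration against some $g_1\in Z_1$; writing the absolutely continuous part of $\psi_2$ as integration against some $g_2\in Z_2$ and testing the identity above against bounded functions supported on sets of finite measure, on which the singular part of $\psi_2$ vanishes, one gets $g=g_1+g_2$ a.e.\ with $\|g_1\|_{Z_1}+\|g_2\|_{Z_2}\le\|\psi_1\|+\|\psi_2\|\le1$. The case of $Z_2'$ absolutely continuous is symmetric.

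I expect the main obstacle to be precisely this last step: turning the abstract inclusion $Y\hookrightarrow(Z_1+Z_2)''$ into a genuine decomposition in $Z_1+Z_2$, where one must dispose of the Yosida--Hewitt singular part of the Hahn--Banach extension — and this is exactly what the absolute-continuity hypothesis is there to make possible. By contrast, the truncation argument of the middle paragraph, though it needs the passage to representation spaces and one use of (P3), is comparatively routine.
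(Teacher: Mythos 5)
Your treatment of the first three assertions is correct and follows the same dual route as the paper: the embedding $Z_1\hookrightarrow Y$ and the limit $\varphi_Y/\varphi_{Z_1}\to0$ are obtained exactly as in the paper's proof via \eqref{prel:X''}, \eqref{prel:embdual} and \eqref{prel:fundamentalfuncsidentity} (your explicit identity $\varphi_Y=\min\{\varphi_{Z_1},\varphi_{Z_2}\}$ is just a more detailed version of the paper's one-line estimate), and for the failure of almost compactness the paper dualizes by \eqref{prel:almostcompactdual} and then simply cites \citep[Lemma~3.7]{F-MMP:10}, whereas you prove that lemma from scratch with the truncation argument $g_k=\min\{h,N_k\}\chi_{(0,1/k)}$. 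That argument is sound: the passage to representation spaces, the use of $L^\infty(0,a)\hookrightarrow Z_1'(0,a)$ to force $\|h\chi_{(0,\delta)}\|_{Z_1'}=\infty$, and the appeal to (P3) are all legitimate, so this part of your write-up is a correct, self-contained replacement for the citation.

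The gap is in the last step of the ``moreover'' part, which the paper settles by citing \citep[Chapter~3, Exercise~5]{BS}. Your Hahn--Banach splitting into $(\psi_1,\psi_2)\in(Z_1')^*\oplus_1(Z_2')^*$ and the identification $(Z_1')^*=(Z_1')'=Z_1$ under the absolute-continuity hypothesis are fine, but the claim that the singular part of $\psi_2$ vanishes on bounded functions supported on sets of finite measure is false in general: singular functionals annihilate only the subspace $(Z_2')_a$ of elements with absolutely continuous norm, and $\chi_E$ need not belong to it. For instance, if $Z_2=L^1$ then $Z_2'=L^\infty$, over a nonatomic measure $(L^\infty)_a=\{0\}$, and a purely finitely additive functional certainly need not kill $\chi_E$; so as written you cannot conclude $g=g_1+g_2$ a.e. The step can be repaired without ever invoking the Yosida--Hewitt decomposition: having found $g_1\in Z_1$ with $\|g_1\|_{Z_1}\le\|\psi_1\|$, set $g_2:=g-g_1$ and estimate $\|g_2\|_{Z_2}=\|g_2\|_{(Z_2')'}$ directly. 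By the Fatou property this associate norm equals the supremum of $\int|fg_2|\,\d\mu$ over simple $f$ with $\|f\|_{Z_2'}\le1$; every such $f$, multiplied by the sign of $g_2$, lies in $Z_1'\cap Z_2'$ with the same $Z_2'$-norm, and on the intersection $\int fg_2\,\d\mu=\psi_2(f)$, whence $\int|fg_2|\,\d\mu\le\|\psi_2\|$. This yields $\|g_1\|_{Z_1}+\|g_2\|_{Z_2}\le\|\psi_1\|+\|\psi_2\|\le1$ and completes the decomposition.
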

\begin{proof}
Combining \eqref{prel:X''} and \eqref{prel:embdual}, we obtain that $Z_1(R, \mu)\hookrightarrow Y(R, \mu)$. Next, we have that $\varphi_{Y'}=\max\{\varphi_{Z_1'}, \varphi_{Z_2'}\}$, whence
\begin{equation*}
\lim_{t\to0^+}\frac{\varphi_Y(t)}{\varphi_{Z_1}(t)}=\lim_{t\to0^+}\frac{\varphi_{Z_1'}(t)}{\varphi_{Y'}(t)}\leq\lim_{t\to0^+}\frac{\varphi_{Z_1'}(t)}{\varphi_{Z_2'}(t)}=\lim_{t\to0^+}\frac{\varphi_{Z_2}(t)}{\varphi_{Z_1}(t)}=0
\end{equation*}
thanks to \eqref{prel:fundamentalfuncsidentity}.

It remains to show that $Z_1(R, \mu)$ is not almost-compactly embedded into $Y(R, \mu)$, which is equivalent to showing that $Z_1'(R, \mu)\cap Z_2'(R, \mu)$ is not almost-compactly embedded into $Z_1'(R, \mu)$ thanks to \eqref{prel:almostcompactdual}. Owing to \eqref{prel:embdual}, we have that $Z_2'(R, \mu)\not\subseteq Z_1'(R, \mu)$, and so the embedding $Z_1'(R, \mu)\cap Z_2'(R, \mu)\hookrightarrow Z_1'(R, \mu)$ is not almost compact by \citep[Lemma~3.7]{F-MMP:10}.

Finally, if either of the spaces $Z_1'(R, \mu)$ and $Z_2'(R, \mu)$ has absolutely continuous norm, then $\left(Z_1'(R, \mu)\cap Z_2'(R, \mu)\right)'=Z_1(R, \mu)+Z_2(R, \mu)$ thanks to \citep[Chapter~3, Exercise~5]{BS}.
\end{proof}

The following two theorems suggest how to find a space $Z_2(R, \mu)$ for a given space $Z_1(R, \mu)=\Lambda^q(v)$ in such a way that \hyperref[prop:paransq2]{Proposition~\ref*{prop:paransq2}} can be used. Since a large number of customary function spaces can be described as a $\Lambda^q(v)$ space, the theorems provide quite general tools for producing spaces $Y(\overline{\Omega}, \nu)$ sought in \hyperref[q:noncompactfundamentallybigger2]{Question~\ref*{q:noncompactfundamentallybigger2}}.
\begin{theorem}\label{thm:paransq2lambdaspaces}
Let $\left(R, \mu\right)$ be a finite, nonatomic measure space. Let $q\in(1,\infty)$ and $v$ be a weight on $(0,\mu(R))$ satisfying
\begin{equation*}
\int_0^ts^{q'}v(s)V(s)^{-q'}\,\d s\lesssim t^{q'}V(t)^{1-q'}\quad\text{for every $t\in(0,\mu(R))$}.
\end{equation*}
Set $Z(R, \mu)=\Lambda^q(v)$. Let $r\in[1,q)$ and $w\colon(0,\mu(R))\to(0,\infty)$ be a weight satisfying \eqref{prel:lambdari} with $v$ and $q$ replaced by $w$ and $r$, respectively,

\begin{equation}\label{thm:paransq2lambdaspaces:ratiooffundamentalfuncs}
    \lim\limits_{t\to0^+}\frac{W(t)^\frac1{r}}{V(t)^\frac1{q}}=0
\end{equation}
and
\begin{equation}\label{thm:paransq2lambdaspaces:XnotintoZ}
    \int_0^{\mu(R)}\left(\frac{W(t)}{V(t)}\right)^\frac{q}{q-r}v(t)\,\d t=\infty.
\end{equation}
Set $Y(R, \mu)=\Lambda^q(v) + \Lambda^r(w)$. We have that $\lim\limits_{t\to0^+}\frac{\varphi_{Y}(t)}{\varphi_{Z}(t)}=0$ and $Z(R, \mu)\hookrightarrow Y(R, \mu)$, but the embedding is not almost compact.
\end{theorem}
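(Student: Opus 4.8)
The plan is to invoke \hyperref[prop:paransq2]{Proposition~\ref*{prop:paransq2}} with $Z_1(R,\mu)=\Lambda^q(v)$ and $Z_2(R,\mu)=\Lambda^r(w)$; once its two hypotheses are checked and the space $\left(Z_1'(R,\mu)\cap Z_2'(R,\mu)\right)'$ is identified with $\Lambda^q(v)+\Lambda^r(w)$, the three asserted properties of $Y(R,\mu)$ follow at once. First I would record the fundamental functions: since the nonincreasing rearrangement of $\chi_E$ with $\mu(E)=t$ equals $\chi_{(0,t)}$, one has $\varphi_{\Lambda^q(v)}(t)=V(t)^{1/q}$ and $\varphi_{\Lambda^r(w)}(t)=W(t)^{1/r}$. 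Hence assumption \eqref{thm:paransq2lambdaspaces:ratiooffundamentalfuncs} is exactly the condition $\lim_{t\to0^+}\varphi_{Z_2}(t)/\varphi_{Z_1}(t)=0$ required by \hyperref[prop:paransq2]{Proposition~\ref*{prop:paransq2}}.

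Next I would verify the second hypothesis, $Z_1(R,\mu)\not\subseteq Z_2(R,\mu)$, which (recall $X\subseteq Y\Leftrightarrow X\hookrightarrow Y$) means $\Lambda^q(v)\not\hookrightarrow\Lambda^r(w)$, i.e.\ the failure of $\|g\|_{L^r(w)}\lesssim\|g\|_{L^q(v)}$ for all nonincreasing $g\geq0$ on $(0,\mu(R))$. Here I would appeal to the classical characterisation of embeddings between classical Lorentz spaces in the range $1\leq r<q<\infty$ (Sawyer; cf.~\cite{S:90} and the references therein): on a space of finite measure, $\Lambda^q(v)\hookrightarrow\Lambda^r(w)$ holds if and only if $\int_0^{\mu(R)}\left(\frac{W(t)}{V(t)}\right)^{\frac{q}{q-r}}v(t)\,\d t<\infty$. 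Assumption \eqref{thm:paransq2lambdaspaces:XnotintoZ} states precisely that this integral diverges, so the embedding fails and $Z_1(R,\mu)\not\subseteq Z_2(R,\mu)$. This is the step carrying the genuine analytic content, and the main obstacle: one must use the embedding criterion in its correct form for $r<q$ on a finite measure space (alternatively one could produce an explicit near-extremal function built from $W/V$, but invoking the known criterion is cleaner).

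With both hypotheses in hand, \hyperref[prop:paransq2]{Proposition~\ref*{prop:paransq2}} gives $\lim_{t\to0^+}\varphi_{(Z_1'\cap Z_2')'}(t)/\varphi_{Z_1}(t)=0$, the embedding $Z_1(R,\mu)\hookrightarrow\left(Z_1'(R,\mu)\cap Z_2'(R,\mu)\right)'$, and the non-almost-compactness of that embedding. It remains only to identify $\left(Z_1'(R,\mu)\cap Z_2'(R,\mu)\right)'$ with $\Lambda^q(v)+\Lambda^r(w)=Y(R,\mu)$: by the ``moreover'' part of \hyperref[prop:paransq2]{Proposition~\ref*{prop:paransq2}} it suffices that one of $Z_1'(R,\mu)$, $Z_2'(R,\mu)$ have absolutely continuous norm, and I would take $Z_1'(R,\mu)=(\Lambda^q(v))'$. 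Since $q\in(1,\infty)$ and $v$ satisfies the stated ($B_q$-type, cf.~\eqref{prel:lambdari}) condition, $\Lambda^q(v)$ is reflexive, so its associate space has absolutely continuous norm; this is standard and may also be read off Sawyer's description of $(\Lambda^q(v))'$ as a $\Gamma^{q'}$-type space with $q'<\infty$, whose defining expression tends to $0$ along shrinking sets by dominated convergence (finiteness of $\mu$ ensuring $(h\chi_{E_k})^*\to0$ pointwise). Hence $\left(Z_1'\cap Z_2'\right)'=\Lambda^q(v)+\Lambda^r(w)=Y(R,\mu)$, and the three displayed conclusions are exactly the claimed properties of $Y(R,\mu)$; apart from the embedding criterion and this absolute-continuity point, everything is routine.
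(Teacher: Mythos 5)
Your proposal is correct and follows essentially the same route as the paper: both reduce the theorem to Proposition~\ref{prop:paransq2} with $Z_1=\Lambda^q(v)$, $Z_2=\Lambda^r(w)$, identify the fundamental functions with $V^{1/q}$ and $W^{1/r}$, obtain $Z_1\not\subseteq Z_2$ from the known embedding criterion for classical Lorentz spaces in the regime $r<q$ (the paper cites Stepanov~\citep{S:93}, you cite Sawyer, but it is the same condition), and verify that $(\Lambda^q(v))'$ has absolutely continuous norm via Sawyer's duality description of the associate norm together with dominated convergence. No gaps.
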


\begin{proof}
The assertion of the theorem will immediately follow from \hyperref[prop:paransq2]{Proposition~\ref*{prop:paransq2}} with $Z_1(R, \mu)=\Lambda^q(v)$ and $Z_2(R, \mu)=\Lambda^r(w)$ once we verify its assumptions. Note that $\Lambda^q(v)$ and $\Lambda^r(w)$ are equivalent to rearrangement-invariant spaces. Assumption \eqref{thm:paransq2lambdaspaces:XnotintoZ} is equivalent to the fact that $Z_1(R, \mu)\not\subseteq Z_2(R, \mu)$ by \citep[Proposition~1]{S:93}. Since $\varphi_{Z_1}\approx V^\frac1{q}$ and $\varphi_{Z_2}\approx W^\frac1{r}$, we plainly have that $\lim\limits_{t\to0^+}\frac{\varphi_{Z_2}(t)}{\varphi_{Z_1}(t)}=0$ thanks to \eqref{thm:paransq2lambdaspaces:ratiooffundamentalfuncs}. It only remains to observe that $Z_1'(R, \mu)$ has absolutely continuous norm. Indeed, owing to \citep[Theorem~1]{S:90} combined with \eqref{prel:assocnormwithstars}, we have that
\begin{align*}
\|g^*\chi_{(0,a)}\|_{Z_1'(0,\mu(R))}&\approx\left(\int_0^{\mu(R)}\left(g^*\chi_{(0,a)}\right)^{**}(t)^{q'}t^{-q'}V(t)^{-q'}v(t)\,\d t\right)^{\frac1{q'}}\\
&\quad+V(\mu(R))^{-q'}\int_0^ag^*(t)\,\d t
\end{align*}
for every $g\in Z_1'(R, \mu)$ and $a\in(0,\mu(R))$. Hence the claim follows from Lebesgue's dominated convergence theorem.
\end{proof}

\begin{theorem}\label{thm:paransq2lambdaspacesqinfty}
Let $\left(R, \mu\right)$ be a finite, nonatomic measure space. Let $v$ be a weight on $(0,\mu(R))$ satisfying
\begin{equation*}
\tilde{v}(\mu(R))<\infty\quad\text{and}\quad\sup_{t\in(0,\mu(R))}\tilde{v}(t)\frac1{t}\int_0^t\frac1{\tilde{v}(s)}\,\d s<\infty,
\end{equation*}
where $\tilde{v}(t)=\esssup\limits_{s\in(0,t)}v(s)$, $t\in(0,\mu(R)]$. Set $Z(R, \mu)=\Lambda^\infty(v)$. Let $r\in[1,\infty)$ and $w\colon(0,\mu(R))\to(0,\infty)$ be a weight on $(0,\mu(R))$ satisfying \eqref{prel:lambdari} with $v$, $q$ replaced by $w$ and $r$, respectively,
\begin{equation*}
    \lim\limits_{t\to0^+}\frac{W(t)^\frac1{r}}{\tilde{v}(t)}=0
\end{equation*}
and
\begin{equation*}
    \left\|\frac1{\tilde{v}}\right\|_{\Lambda^r(w)}=\infty.
\end{equation*}
Set $Y(R, \mu)=\Lambda^\infty(v) + \Lambda^r(w)$. We have that $\lim\limits_{t\to0^+}\frac{\varphi_{Y}(t)}{\varphi_{Z}(t)}=0$ and $Z(R, \mu)\hookrightarrow Y(R, \mu)$, but the embedding is not almost compact.
\end{theorem}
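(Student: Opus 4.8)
The plan is to derive the statement from \hyperref[prop:paransq2]{Proposition~\ref*{prop:paransq2}} applied with $Z_1(R,\mu)=\Lambda^\infty(v)$ and $Z_2(R,\mu)=\Lambda^r(w)$, following the proof of \hyperref[thm:paransq2lambdaspaces]{Theorem~\ref*{thm:paransq2lambdaspaces}} almost verbatim; the only genuinely new point is that the role played there by the formula of \citep[Theorem~1]{S:90} for $(\Lambda^q(v))'$ is now taken over by an elementary description of $(\Lambda^\infty(v))'$. First I would record that, under the standing hypotheses, both $\Lambda^\infty(v)$ and $\Lambda^r(w)$ are equivalent to rearrangement-invariant spaces by \eqref{prel:lambdari} (its $q=\infty$ line and its $q=r$ line, respectively), so that it remains to verify the three hypotheses of \hyperref[prop:paransq2]{Proposition~\ref*{prop:paransq2}}: that $Z_1\not\subseteq Z_2$, that $\lim_{t\to0^+}\varphi_{Z_2}(t)/\varphi_{Z_1}(t)=0$, and that the norm of $Z_1'$ is absolutely continuous. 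Once these are in hand, the proposition yields the two asserted properties of $Y$, and its concluding clause gives the identity $Y(R,\mu)=\Lambda^\infty(v)+\Lambda^r(w)$ because $Z_1'$ will have been shown to have absolutely continuous norm.

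The workhorse of the argument is the following extremality property: for every $f\in\Mpl(R,\mu)$ with $\|f\|_{\Lambda^\infty(v)}\le1$ one has $f^*\le1/\tilde v$, while $1/\tilde v$ itself lies in the unit ball of $\Lambda^\infty(v)$. Indeed, such an $f$ satisfies $f^*(u)v(u)\le1$ for a.e.\ $u$, and, $f^*$ being nonincreasing, $f^*(s)\le f^*(u)\le v(u)^{-1}$ for a.e.\ $u\in(0,s)$, whence $f^*(s)\le\tilde v(s)^{-1}$ for every $s\in(0,\mu(R))$; on the other hand $\tilde v(s)^{-1}v(s)\le1$ a.e. Consequently $\varphi_{Z_1}\approx\tilde v$, whereas $\varphi_{Z_2}\approx W^{1/r}$, so the assumption $\lim_{t\to0^+}W(t)^{1/r}/\tilde v(t)=0$ gives $\lim_{t\to0^+}\varphi_{Z_2}(t)/\varphi_{Z_1}(t)=0$. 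The same extremality shows that $\Lambda^\infty(v)\subseteq\Lambda^r(w)$ if and only if $1/\tilde v\in\Lambda^r(w)$ — the forward implication because $1/\tilde v\in\Lambda^\infty(v)$, the reverse because $f^*\le1/\tilde v$ for every $f$ in the unit ball of $\Lambda^\infty(v)$ — so the hypothesis $\|1/\tilde v\|_{\Lambda^r(w)}=\infty$ is precisely the requirement $Z_1\not\subseteq Z_2$.

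Finally, combining \eqref{prel:assocnormwithstars} with the same extremality of $1/\tilde v$, one obtains, for every $g\in\Mpl(R,\mu)$,
\begin{equation*}
\|g\|_{(\Lambda^\infty(v))'}=\sup\left\{\int_0^{\mu(R)}f^*(t)g^*(t)\,\d t\colon\|f\|_{\Lambda^\infty(v)}\le1\right\}=\int_0^{\mu(R)}\frac{g^*(t)}{\tilde v(t)}\,\d t,
\end{equation*}
that is, $Z_1'=\Lambda^1(1/\tilde v)$; here $1/\tilde v$ is a legitimate weight, since the second standing assumption forces $\frac1t\int_0^t\tilde v(s)^{-1}\,\d s\lesssim\tilde v(t)^{-1}<\infty$, and, $\tilde v$ being nondecreasing, $\Lambda^1(1/\tilde v)$ is automatically rearrangement-invariant. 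This space has absolutely continuous norm: for $g\in\Lambda^1(1/\tilde v)$ the function $g^*/\tilde v$ belongs to $L^1(0,\mu(R))$, whence $\|g^*\chi_{(0,a)}\|_{\Lambda^1(1/\tilde v)}=\int_0^ag^*(t)\tilde v(t)^{-1}\,\d t\to0$ as $a\to0^+$. All hypotheses of \hyperref[prop:paransq2]{Proposition~\ref*{prop:paransq2}} being verified, the conclusion follows. The one step demanding genuine care is the extremal characterization of $1/\tilde v$ inside the unit ball of $\Lambda^\infty(v)$ and the ensuing identification of $(\Lambda^\infty(v))'$; everything else is bookkeeping parallel to \hyperref[thm:paransq2lambdaspaces]{Theorem~\ref*{thm:paransq2lambdaspaces}}, and, pleasantly, no delicate duality estimate is needed in the endpoint case $q=\infty$.
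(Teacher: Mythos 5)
Your proposal is correct and follows essentially the same route as the paper: the paper's own (deliberately terse) proof also reduces the statement to Proposition~\ref{prop:paransq2} along the lines of Theorem~\ref{thm:paransq2lambdaspaces}, with the single new ingredient being exactly your extremality observation, namely that $\|f\|_{Z}=\esssup_t f^*(t)\tilde v(t)$ and hence $\|g\|_{Z'}=\int_0^{\mu(R)}g^*(t)\tilde v(t)^{-1}\,\d t$, from which the fundamental-function limit, the non-inclusion $Z_1\not\subseteq Z_2$ (equivalent to $\|1/\tilde v\|_{\Lambda^r(w)}=\infty$), and the absolute continuity of $Z_1'$ all follow. Your write-up merely fills in the details the paper omits, so no further comment is needed.
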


\begin{proof}
A proof of the theorem can be carried out along the lines of the proof of \hyperref[thm:paransq2lambdaspaces]{Theorem~\ref*{thm:paransq2lambdaspaces}}, and we omit the details. We just note that (cf.~\citep[Lemma~1.5]{GoPi:06})
\begin{equation*}
\|f\|_{Z(R, \mu)}=\esssup_{t\in(0, \mu(R))}f^*(t)\tilde{v}(t)\quad\text{for every $f\in\Mpl(R, \mu)$},
\end{equation*}
and so, since $\frac1{\tilde{v}}$ is a positive, nonincreasing function on $(0,\mu(R))$,
\begin{equation*}
\|g\|_{Z'(R, \mu)}=\sup_{\|f\|_{Z(R, \mu)}\leq1}\int_0^{\mu(R)}f^*(t)g^*(t)\,\d t=\int_0^{\mu(R)}\frac{g^*(t)}{\tilde{v}(t)}\,\d t\quad\text{for every $g\in\Mpl(R, \mu)$}
\end{equation*}
thanks to \eqref{prel:assocnormwithstars}.
\end{proof}

Now is the time to provide concrete examples of spaces $Y(\overline{\Omega}, \nu)$ sought in \hyperref[q:noncompactfundamentallybigger2]{Question~\ref*{q:noncompactfundamentallybigger2}}. We shall do it in the case where $X(\Omega)$ is a Lorentz--Zygmund space.
\begin{theorem}
Let $\Omega\subseteq\rn$ be a bounded Lipschitz domain, $m\in\N$, $m<n$, $d\in[n-m,n]$ and $\nu$ a $d$-Ahlfors measure on $\overline{\Omega}$. Let $q\in(1,\infty]$ and $\alpha\in\R$. Fix any $s\in[1,q)$ and consider the rearrangement-invariant space $Y(\overline{\Omega}, \nu)$ defined as
\begin{equation*}
Y(\overline{\Omega}, \nu) =\begin{cases}
L^{\frac{dp}{n-mp},q;\alpha}(\overline{\Omega}, \nu)+L^{\frac{dp}{n-mp},s;\alpha+\frac1{q}-\frac1{s}}(\overline{\Omega}, \nu)\quad&\text{if $p\in(1,\frac{n}{m})$,}\\
L^{\infty,q;\alpha-1}(\overline{\Omega}, \nu) + L^{\infty,s;\alpha-1+\frac1{q}-\frac1{s},\frac1{q}-\frac1{s}}(\overline{\Omega}, \nu)\quad&\text{if $p=\frac{n}{m}$ and $\alpha<1-\frac1{q}$,}\\
L^{\infty,q;-\frac1{q},-1}(\overline{\Omega}, \nu)+L^{\infty,s;-\frac1{s},\frac1{q}-\frac1{s}-1,\frac1{q}-\frac1{s}}(\overline{\Omega}, \nu)\quad&\text{if $p=\frac{n}{m}$ and $\alpha=1-\frac1{q}$.}
\end{cases}
\end{equation*}

The space $Y(\overline{\Omega}, \nu)$ satisfies
\begin{itemize}
\item $W^mL^{p,q;\alpha}(\Omega)\hookrightarrow Y(\overline{\Omega}, \nu)$ non-compactly,
\item $Y_{L^{p,q;\alpha}}(\overline{\Omega}, \nu)\subsetneq Y(\overline{\Omega}, \nu)$,
\item $\lim\limits_{t\to0^+}\frac{\varphi_Y(t)}{\varphi_{Y_{L^{p,q;\alpha}}}(t)}=0$.
\end{itemize}
%
\end{theorem}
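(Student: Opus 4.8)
The plan is to realize each of the three candidate spaces $Y(\overline{\Omega}, \nu)$ as a sum of two classical Lorentz spaces and then invoke Theorem~\ref{thm:paransq2lambdaspaces} (when $q<\infty$) or Theorem~\ref{thm:paransq2lambdaspacesqinfty} (when $q=\infty$), which themselves rest on Proposition~\ref{prop:paransq2}, and finally to combine this with the compactness characterization \cite[Theorem~4.1]{CM:20}. First I would quote \cite[Theorem~5.1]{CM:20} to identify the optimal target space, obtaining $Y_{L^{p,q;\alpha}}(\overline{\Omega}, \nu)=L^{\frac{dp}{n-mp},q;\alpha}(\overline{\Omega}, \nu)$ when $p\in(1,\frac{n}{m})$, $Y_{L^{p,q;\alpha}}(\overline{\Omega}, \nu)=L^{\infty,q;\alpha-1}(\overline{\Omega}, \nu)$ when $p=\frac{n}{m}$ and $\alpha<1-\frac1q$, and $Y_{L^{p,q;\alpha}}(\overline{\Omega}, \nu)=L^{\infty,q;-\frac1q,-1}(\overline{\Omega}, \nu)$ when $p=\frac{n}{m}$ and $\alpha=1-\frac1q$. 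In each regime this space is precisely the first summand $Z_1(\overline{\Omega}, \nu)$ in the definition of $Y(\overline{\Omega}, \nu)$, and the standing requirement $L^{p,q;\alpha}(\Omega)\not\subseteq L^{\frac{n}{m},1}(\Omega)$ (which holds automatically in all three regimes) guarantees $Z_1(\overline{\Omega}, \nu)\neq L^\infty(\overline{\Omega}, \nu)$, hence $\varphi_{Z_1}(0^+)=0$. Since every Lorentz--Zygmund space is a classical Lorentz space, I would write $Z_1(\overline{\Omega}, \nu)=\Lambda^q(v)$ and take the second summand to be $Z_2(\overline{\Omega}, \nu)=\Lambda^s(w)$, reading the weights $v$ and $w$ off the defining norms (so, e.g., $v(t)=t^{\frac{q}{P}-1}\ell(t)^{q\alpha}$ with $P=\frac{dp}{n-mp}$ in the first regime, $v(t)=t^{-1}\ell(t)^{q(\alpha-1)}$ in the second, $v(t)=t^{-1}\ell(t)^{-1}\ell\ell(t)^{-q}$ in the third, and analogously for $w$, with $\tilde v$ replacing $v$ when $q=\infty$).

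With these weights fixed, the bulk of the work is to verify the hypotheses of Theorem~\ref{thm:paransq2lambdaspaces} (or Theorem~\ref{thm:paransq2lambdaspacesqinfty}) for the triple $(v,w,s)$. The conditions \eqref{prel:lambdari} for $(v,q)$ and for $(w,s)$, along with the $B_q$-type inequality demanded of $v$, are routine properties of Lorentz--Zygmund weights (they follow from \cite[Theorem~3.35]{P}) and would be dispatched quickly. The two substantive requirements are \eqref{thm:paransq2lambdaspaces:ratiooffundamentalfuncs} and \eqref{thm:paransq2lambdaspaces:XnotintoZ}. For the former I would compute the primitives $V$ and $W$ explicitly in each regime and check that $W(t)^{1/s}/V(t)^{1/q}$ behaves like $\ell(t)^{\frac1q-\frac1s}$ (or like $\ell\ell(t)^{\frac1q-\frac1s}$ in the last regime), which tends to $0$ as $t\to0^+$ because $s<q$; this is exactly why the logarithmic exponents in $Z_2$ have been shifted by $\frac1q-\frac1s$. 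For the latter, the same shift makes $\bigl(\frac{W(t)}{V(t)}\bigr)^{q/(q-s)}v(t)$ collapse, after cancellation, to a weight whose integral near the origin is the borderline divergent one (of the form $\frac1{t\ell(t)}$, or $\frac1{t\ell(t)\ell\ell(t)}$ in the last regime), which gives $Z_1(\overline{\Omega}, \nu)\not\subseteq Z_2(\overline{\Omega}, \nu)$; when $q=\infty$ this is replaced by the verification that $\bigl\|1/\tilde v\bigr\|_{\Lambda^s(w)}=\infty$.

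Once the hypotheses are in place, Theorem~\ref{thm:paransq2lambdaspaces}/\ref{thm:paransq2lambdaspacesqinfty} yields that $Y(\overline{\Omega}, \nu)=Z_1(\overline{\Omega}, \nu)+Z_2(\overline{\Omega}, \nu)$ is equivalent to a rearrangement-invariant space with $\lim_{t\to0^+}\frac{\varphi_Y(t)}{\varphi_{Z_1}(t)}=0$ and $Z_1(\overline{\Omega}, \nu)\hookrightarrow Y(\overline{\Omega}, \nu)$, not almost compactly. Since $Z_1(\overline{\Omega}, \nu)=Y_{L^{p,q;\alpha}}(\overline{\Omega}, \nu)$, the third bullet is immediate; moreover $\lim_{t\to0^+}\frac{\varphi_Y(t)}{\varphi_{Z_1}(t)}=0$ forces $\varphi_Y\not\approx\varphi_{Z_1}$, so $Y(\overline{\Omega}, \nu)\neq Z_1(\overline{\Omega}, \nu)$, and together with the continuous embedding this gives $Y_{L^{p,q;\alpha}}(\overline{\Omega}, \nu)\subsetneq Y(\overline{\Omega}, \nu)$, the second bullet; in particular $\varphi_Y(0^+)=0$, so $Y(\overline{\Omega}, \nu)\neq L^\infty(\overline{\Omega}, \nu)$. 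Finally, because $Y_{L^{p,q;\alpha}}(\overline{\Omega}, \nu)$ is not almost-compactly embedded into $Y(\overline{\Omega}, \nu)$ and $Y(\overline{\Omega}, \nu)\neq L^\infty(\overline{\Omega}, \nu)$, the characterization \cite[Theorem~4.1]{CM:20} shows that $W^mL^{p,q;\alpha}(\Omega)\hookrightarrow Y(\overline{\Omega}, \nu)$ is not compact, the first bullet. The only genuinely laborious part is the case-by-case evaluation of $V$, $W$ and of the integral in \eqref{thm:paransq2lambdaspaces:XnotintoZ} through the nested logarithms; I expect the endpoint $q=\infty$ to force the separate use of Theorem~\ref{thm:paransq2lambdaspacesqinfty} precisely because $\Lambda^\infty(v)$ must be handled through $\tilde v$ rather than through a $B_q$ condition.
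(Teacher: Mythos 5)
Your argument is correct, and it follows the route that the paper itself flags as viable before taking a shortcut: the paper's proof also starts from \citep[Theorems~4.1 and~5.1]{CM:20} to identify $Y_{L^{p,q;\alpha}}(\overline{\Omega}, \nu)$ with the first summand and to reduce non-compactness to the failure of almost-compactness of $Y_{L^{p,q;\alpha}}(\overline{\Omega}, \nu)\hookrightarrow Y(\overline{\Omega}, \nu)$, but it then applies \hyperref[prop:paransq2]{Proposition~\ref*{prop:paransq2}} directly with $Z_1=Y_{L^{p,q;\alpha}}(\overline{\Omega}, \nu)$ and $Z_2$ the second summand, verifying the three hypotheses by citation: the fundamental-function decay from \citep[Lemma~3.7]{OP}, the non-inclusion $Z_1\not\subseteq Z_2$ from \citep[Theorem~4.5]{OP}, and the absolute continuity of $Z_1'$ (needed to identify $(Z_1'\cap Z_2')'$ with $Z_1+Z_2$) from \citep[Theorems~6.11 and~9.5]{OP}. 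You instead route everything through \hyperref[thm:paransq2lambdaspaces]{Theorem~\ref*{thm:paransq2lambdaspaces}} (resp.\ \hyperref[thm:paransq2lambdaspacesqinfty]{Theorem~\ref*{thm:paransq2lambdaspacesqinfty}} for $q=\infty$), which means realizing each summand as a classical Lorentz space and checking \eqref{thm:paransq2lambdaspaces:ratiooffundamentalfuncs} and \eqref{thm:paransq2lambdaspaces:XnotintoZ} by hand; your sample computation in the first regime (the cancellation down to $t^{-1}\ell(t)^{-1}$, borderline divergent) is the right one, and the analogous nested-logarithm computations in the other regimes go through the same way. Since both general theorems are themselves proved via \hyperref[prop:paransq2]{Proposition~\ref*{prop:paransq2}}, the two arguments are ultimately the same mechanism; yours trades the external references to \citep{OP} for explicit weight calculations, which is more self-contained but more laborious, and the paper's version is shorter precisely because those Lorentz--Zygmund embedding facts are already on record. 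One small point in your favour: you explicitly note that $\varphi_Y(0^+)=0$ forces $Y(\overline{\Omega}, \nu)\neq L^\infty(\overline{\Omega}, \nu)$, which is the standing hypothesis of \citep[Theorem~4.1]{CM:20} that the paper's proof leaves implicit.
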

\begin{proof}
Thanks to \citep[Theorem~5.1]{CM:20}, we have that
\begin{equation*}
Y_{L^{p,q;\alpha}}(\overline{\Omega}, \nu)=\begin{cases}
L^{\frac{dp}{n-mp},q;\alpha}(\overline{\Omega}, \nu)\quad&\text{if $p\in(1,\frac{n}{m})$,}\\
L^{\infty,q;\alpha-1}(\overline{\Omega}, \nu)\quad&\text{if $p=\frac{n}{m}$ and $\alpha<1-\frac1{q}$,}\\
L^{\infty,q;-\frac1{q},-1}(\overline{\Omega}, \nu)\quad&\text{if $p=\frac{n}{m}$ and $\alpha=1-\frac1{q}$.}
\end{cases}
\end{equation*}
Due to \citep[Theorem~4.1]{CM:20}, $W^mL^{p,q;\alpha}(\Omega)\hookrightarrow Y(\overline{\Omega}, \nu)$ is not compact if and only if $Y_{L^{p,q;\alpha}}(\overline{\Omega}, \nu)\hookrightarrow Y(\overline{\Omega}, \nu)$ is not almost compact.

The fact that the space $Y(\overline{\Omega}, \nu)$ has all of the desired properties can be derived from \hyperref[thm:paransq2lambdaspaces]{Theorem~\ref*{thm:paransq2lambdaspaces}} (if $q<\infty$) or \hyperref[thm:paransq2lambdaspacesqinfty]{Theorem~\ref*{thm:paransq2lambdaspacesqinfty}} (if $q=\infty$). However, making use of computations that were already done, we can also obtain the assertion directly from \hyperref[prop:paransq2]{Theorem~\ref*{prop:paransq2}} with $X(\overline{\Omega}, \nu)=Y_{L^{p,q;\alpha}}(\overline{\Omega}, \nu)$ and
\begin{equation*}
Z(\overline{\Omega}, \nu)=\begin{cases}
L^{\frac{dp}{n-mp},s;\alpha+\frac1{q}-\frac1{s}}(\overline{\Omega}, \nu)\quad&\text{if $p\in(1,\frac{n}{m})$,}\\
L^{\infty,s;\alpha-1+\frac1{q}-\frac1{s},\frac1{q}-\frac1{s}}(\overline{\Omega}, \nu)\quad&\text{if $p=\frac{n}{m}$ and $\alpha<1-\frac1{q}$,}\\
L^{\infty,s;-\frac1{s},\frac1{q}-\frac1{s}-1,\frac1{q}-\frac1{s}}(\overline{\Omega}, \nu)\quad&\text{if $p=\frac{n}{m}$ and $\alpha=1-\frac1{q}$.}
\end{cases}\end{equation*}
Indeed, we have that $\lim\limits_{t\to0^+}\frac{\varphi_Z(t)}{\varphi_{Y_X}(t)}=0$ thanks to \citep[Lemma~3.7]{OP}, and that $Y_X(\overline{\Omega}, \nu)\not\subseteq Z(\overline{\Omega}, \nu)$ thanks to \citep[Theorem~4.5]{OP}. Finally, since $q>1$, $Y'_{L^{p,q;\alpha}}(\overline{\Omega}, \nu)$ has absolutely continuous norm by \citep[Theorems~6.11 and~9.5]{OP}.
\end{proof}

We have seen that the connection between the optimal target space $Y_X(\overline{\Omega}, \nu)$ in the embedding $W^mX(\Omega)\hookrightarrow Y(\overline{\Omega}, \nu)$ and the question of whether the embedding $W^mX(\Omega)\hookrightarrow Y(\overline{\Omega}, \nu)$ is compact is fairly unrelated to ``the fundamental scale'' of $Y_X(\overline{\Omega}, \nu)$. In fact, we have witnessed that it may even happen that $Y(\overline{\Omega}, \nu)$ is ``fundamentally bigger'' than $M_{Y_X}(\overline{\Omega}, \nu)$ (i.e.~$M_{Y_X}(\overline{\Omega}, \nu)\subsetneq Y(\overline{\Omega}, \nu)$ and $\lim\limits_{t\to0^+}\frac{\varphi_Y(t)}{\varphi_{M_{Y_X}}(t)}=0$), and yet the embedding $W^mX(\Omega)\hookrightarrow Y(\overline{\Omega}, \nu)$ is still not compact. This shows that the question of whether $W^mX(\Omega)\hookrightarrow Y(\overline{\Omega}, \nu)$ is compact is actually far more subtle than it may misleadingly appear when only the Lebesgue (or even two-parameter Lorentz) spaces are taken into account. We conclude this paper with results illustrating even further the unrelatedness of compactness to the fundamental scale of the optimal target space. More precisely, we construct a Lorentz endpoint space $\Lambda_\psi(\overline{\Omega}, \nu)$ (i.e.~the smallest rearrangement-invariant space on the fundamental scale given by $\psi$) and rearrangement-invariant spaces $X(\overline{\Omega}, \nu)$ and $Y_X(\overline{\Omega}, \nu)$ with the following properties:
\begin{itemize}
	\item The spaces $X(\overline{\Omega}, \nu)$ and $Y_X(\overline{\Omega}, \nu)$ are mutually optimal in $W^mX(\Omega)\hookrightarrow Y_X(\overline{\Omega}, \nu)$, that is, $Y_X(\overline{\Omega}, \nu)$ is the optimal target space for $X(\Omega)$ and, simultaneously, $X(\Omega)$ is the optimal domain space for $Y_X(\overline{\Omega}, \nu)$ (i.e.~the largest possible rearrangement-invariant space rendering the embedding true);
	\item $W^mX(\Omega)\hookrightarrow \Lambda_\psi(\overline{\Omega}, \nu)$ non-compactly;
	\item $Y_X(\overline{\Omega}, \nu)\hookrightarrow \Lambda_\psi(\overline{\Omega}, \nu)$;
  \item $\lim\limits_{t\to0^+}\frac{\psi(t)}{\varphi_{Y_X}(t)}=0$.
\end{itemize}

The following proposition characterizes when the spaces in a Sobolev embedding are mutually optimal. We will use the proposition, which is of independent interested, to prove \hyperref[thm:answtoq:noncompactintolorentzendpoint]{Theorem~\ref*{thm:answtoq:noncompactintolorentzendpoint}}, which will tell us how to construct spaces having the properties listed above. We note that the optimal domain space $X_Y(\Omega)$ appearing in the following proposition always exists (see the proof of the proposition).

%

\begin{proposition}\label{prop:optimalpair}
Let $\Omega\subseteq\rn$ be a bounded Lipschitz domain, $m\in\N$, $m<n$, $d\in[n-m,n]$ and $\nu$ a $d$-Ahlfors measure on $\overline{\Omega}$. Let $Y(\overline{\Omega}, \nu)$ be a rearrangement-invariant space over $(\overline{\Omega}, \nu)$. Let $X_Y(\Omega)$ be the optimal domain space for $Y(\overline{\Omega}, \nu)$ in $W^mX(\Omega)\hookrightarrow Y(\overline{\Omega}, \nu)$. The space $Y(\overline{\Omega}, \nu)$ is the optimal range space for $X_Y(\Omega)$ in the Sobolev embedding, that is, $Y(\overline{\Omega}, \nu)$ and $X_Y(\Omega)$ are mutually optimal, if and only if $T_{d,m,n}\colon Y'(0,\nu(\overline{\Omega}))\to Y'(0,\nu(\overline{\Omega}))$ is bounded, where
\begin{equation}\label{prop:optimalpair:defsupop}
T_{d,m,n}f(t)=t^{\frac{n-m}{d}-1}\sup\limits_{s\in[t,\nu(\overline{\Omega}))}s^{1-\frac{n-m}{d}}f^*(s),\ t\in(0,\nu(\overline{\Omega})),\ f\in\M(0,\nu(\overline{\Omega})).
\end{equation}
Moreover, in that case, the norm on $X_Y(\Omega)$ is equivalent to the functional
\begin{equation}\label{prop:optimalpair:optimaldomainfunc}
\M(\Omega)\ni f\mapsto\Bigg\|\int_{\nu(\overline{\Omega})^{1-\frac{n}{d}}t^\frac{n}{d}}^{\nu(\overline{\Omega})}f^*\Big(\frac{|\Omega|}{\nu(\overline{\Omega})}s\Big)s^{-1+\frac{m}{n}}\,\d s\Bigg\|_{Y(0,\nu(\overline{\Omega}))}.
\end{equation}
\end{proposition}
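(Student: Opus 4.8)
The plan is to convert the mutual-optimality condition into a boundedness statement for the supremum operator $T_{d,m,n}$ by combining the reduction principle for Sobolev embeddings with respect to $d$-Ahlfors measures with the duality between optimal domain and optimal range spaces.

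First I would invoke the reduction principle from \citep{CPS:20}: after passing to representation spaces, the embedding $W^mX(\Omega)\hookrightarrow Y(\overline{\Omega},\nu)$ holds if and only if the operator
\[
Hg(t)=\int_{\nu(\overline{\Omega})^{1-\frac{n}{d}}t^{\frac{n}{d}}}^{\nu(\overline{\Omega})}g(s)\,s^{\frac{m}{n}-1}\,\d s,\qquad t\in(0,\nu(\overline{\Omega})),
\]
acting on nonincreasing functions, is bounded from $X(0,|\Omega|)$ (identified, via the harmless dilation $s\mapsto\frac{|\Omega|}{\nu(\overline{\Omega})}s$, with a space on $(0,\nu(\overline{\Omega}))$) into $Y(0,\nu(\overline{\Omega}))$. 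In particular the optimal domain space $X_Y(\Omega)$ exists --- it is the space whose norm is the largest rearrangement-invariant function norm dominated by $f\mapsto\|H(f^*)\|_{Y}$ --- which already settles the remark preceding the statement. The operator $H$ is bounded $X\to Y$ if and only if its conjugate $H'k(s)=s^{\frac{m}{n}-1}\int_0^{\nu(\overline{\Omega})^{1-\frac{d}{n}}s^{\frac{d}{n}}}k(u)\,\d u$ is bounded $Y'\to X'$; consequently the optimal range space for any $Z(\Omega)$ is, up to equivalence with a rearrangement-invariant norm, the space whose associate carries the norm $h\mapsto\|H'(h^*)\|_{Z'(0,|\Omega|)}$.

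Next I would reformulate mutual optimality. Since $Y(\overline{\Omega},\nu)$ is, by construction, an admissible target for $X_Y(\Omega)$, the optimal range always satisfies $Y_{X_Y}(\overline{\Omega},\nu)\hookrightarrow Y(\overline{\Omega},\nu)$, hence $Y'(\overline{\Omega},\nu)\hookrightarrow Y_{X_Y}'(\overline{\Omega},\nu)$ by \eqref{prel:embdual}. Therefore $Y(\overline{\Omega},\nu)$ is the optimal range for $X_Y(\Omega)$ if and only if the reverse embedding $Y_{X_Y}'(\overline{\Omega},\nu)\hookrightarrow Y'(\overline{\Omega},\nu)$ holds, i.e.\ exactly when $\|h\|_{Y'}\lesssim\|H'(h^*)\|_{X_Y'}$ for every $h$. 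The opposite inequality is automatic: from the defining property $\|H(f^*)\|_{Y}\lesssim\|f\|_{X_Y}$, the conjugacy identity $\int_0^{|\Omega|}H'(h^*)f^*=\int_0^{\nu(\overline{\Omega})}h^*\,H(f^*)$, and the Hölder inequality \eqref{prel:holder} in $Y$, one gets $\|H'(h^*)\|_{X_Y'}\lesssim\|h\|_{Y'}$. Thus everything reduces to deciding when $\|H'(h^*)\|_{X_Y'}\gtrsim\|h\|_{Y'}$.

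The crux is the computation of $\|H'(h^*)\|_{X_Y'}$. Expanding the associate norm through \eqref{prel:assocnormwithstars}, using the conjugacy of $H$ and $H'$, Fubini's theorem and the Hardy lemma \eqref{prel:hardy-lemma}, one rewrites this quantity as a supremum of $\int_0^{\nu(\overline{\Omega})}h^*(s)\psi(s)\,\d s$ taken over the nonincreasing functions $\psi$ in the unit ball of $Y(0,\nu(\overline{\Omega}))$ that lie in the range of $H$; performing the substitution that straightens the lower limit $\nu(\overline{\Omega})^{1-\frac{n}{d}}t^{\frac{n}{d}}$ into the identity identifies this range with $\big\{t\mapsto\int_t^{\nu(\overline{\Omega})}\eta(r)r^{\frac{m}{d}-1}\,\d r:\eta\text{ nonincreasing}\big\}$. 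Hence $\|h\|_{Y'}\lesssim\|H'(h^*)\|_{X_Y'}$ holds for all $h$ precisely when every nonincreasing element of the unit ball of $Y$ is dominated, up to a constant and with comparable $Y$-norm, by a function of this form --- equivalently, when the operator taking a nonincreasing $\psi$ to its least nonincreasing majorant of that type is bounded on $Y(0,\nu(\overline{\Omega}))$. A direct bookkeeping of exponents (the weight $s^{\frac{m}{n}-1}$ in $H$ becoming $r^{\frac{m}{d}-1}$ after the substitution, and then the weight $t^{\frac{n-m}{d}-1}$ on the dual side) identifies this majorant operator and, via the duality for Hardy-type supremum operators in \citep[Theorem~3.2]{GOP} (cf.\ \citep{P}), shows its boundedness on $Y(0,\nu(\overline{\Omega}))$ to be equivalent to the boundedness of $T_{d,m,n}$, defined in \eqref{prop:optimalpair:defsupop}, on $Y'(0,\nu(\overline{\Omega}))$. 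As a sanity check, when $d=n-m$ one has $\tfrac{n-m}{d}=1$, so $T_{d,m,n}$ is the identity and the condition is vacuous, matching the fact that for $d=n-m$ the embedding is favorable enough that every admissible target is optimal for its optimal domain. Finally, for the ``Moreover'' part, when $T_{d,m,n}$ is bounded on $Y'$ the same supremum-operator estimate removes the obstruction to the subadditivity of $f\mapsto\|H(f^*)\|_{Y}$, so this functional is then equivalent to a rearrangement-invariant function norm and must coincide with $\|\cdot\|_{X_Y}$; undoing the dilation $s\mapsto\frac{|\Omega|}{\nu(\overline{\Omega})}s$ produces exactly \eqref{prop:optimalpair:optimaldomainfunc}.

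I expect the main obstacle to be precisely the third paragraph: carrying out the Fubini/Hardy-lemma reduction cleanly, correctly identifying the least-nonincreasing-majorant operator attached to the range of $H$ (in particular tracking how the $\tfrac{m}{n}$-weight is transformed into the $\tfrac{m}{d}$- and then $\tfrac{n-m}{d}$-weight under the substitution $t\mapsto\nu(\overline{\Omega})^{1-\frac{n}{d}}t^{\frac{n}{d}}$), and matching it to $T_{d,m,n}$ through the appropriate boundedness/duality theorem for supremum operators; the passages in the second paragraph are comparatively routine.
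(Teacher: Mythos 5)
Your first two paragraphs are sound and coincide with the paper's setup: the reduction principle of \citep[Theorems~4.1 and 4.3]{CPS:20} gives the optimal domain norm $\|f\|_{X_Y}=\sup_{g\sim f}\|H g\|_{Y(0,\nu(\overline{\Omega}))}$, the existence of $X_Y(\Omega)$ follows, and by \citep[Theorem~4.4]{CPS:20} together with \eqref{prel:X''} mutual optimality reduces to the single inequality $\|h\|_{Y'}\lesssim\|H'(h^*)\|_{X_Y'}$, the converse inequality being automatic. Your sanity check for $d=n-m$ is also correct.

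The gap is exactly where you locate it, and it is a gap of substance, not of bookkeeping. First, expanding $\|H'(h^*)\|_{X_Y'}$ by duality yields a supremum of $\int h^*\,Hf^*$ over $f$ satisfying $\sup_{g\sim f}\|Hg\|_{Y}\leq1$, \emph{not} over all nonincreasing $\psi$ in the unit ball of $Y$ lying in the range of $H$; identifying the two requires $\sup_{g\sim f}\|Hg\|_{Y}\approx\|Hf^*\|_{Y}$, which is the ``Moreover'' clause of the statement and is itself proved \emph{from} the boundedness of $T_{d,m,n}$ on $Y'$ --- so in the ``only if'' direction your crux computation is circular. Second, the step from ``the least majorant operator attached to the range of $H$ is bounded on $Y$'' to ``$T_{d,m,n}$ is bounded on $Y'$'' is asserted via \citep[Theorem~3.2]{GOP}, but that result is a boundedness criterion for supremum operators on spaces of classical Lorentz type, not a duality principle valid for an arbitrary rearrangement-invariant $Y$; the duality you are appealing to is precisely what must be proved. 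The paper replaces it by direct estimates: for the ``only if'' direction it applies the identity $\|g\|_{Y'}\approx\|H'(g^*)\|_{X_Y'}$ to $g$ replaced by $T_{d,m,n}g$ (which is nonincreasing), splits the supremum in $T_{d,m,n}g(t)$ at $|\Omega|^{1-\frac{d}{n}}s^{\frac{d}{n}}$, and controls the two pieces by \citep[Lemma~3.3(ii)]{CP:16} and a variant of \citep[Lemma~4.10]{EMMP:20}; for the ``if'' direction it runs a Hardy--Littlewood/Fubini/H\"older chain in which $h^*$ is replaced by $T_{d,m,n}h$, using that the resulting averaged function is nonincreasing, and invokes the boundedness of $T_{d,m,n}$ on $Y'$ only at the last step, before upgrading the resulting equivalence of norms on functions of the form $Hf^*$ to equality of the spaces via simple functions $\sum_j c_j\chi_{(0,t_j)}$ (a step your dual-side formulation would legitimately bypass, but only if the key inequality were actually established). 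Without these concrete estimates your third paragraph is a programme rather than a proof.
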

\begin{proof}
First, we note that, for every pair of rearrangement-invariant spaces $Z_1(\Omega)$ and $Z_2(\overline{\Omega}, \nu)$, the validity of the embedding $W^mZ_1(\Omega)\hookrightarrow Z_2(\overline{\Omega}, \nu)$ is equivalent to the validity of
\begin{equation}\label{prop:optimalpair:reductionprinciple}
\Bigg\|\int_{\nu(\overline{\Omega})^{1-\frac{n}{d}}t^\frac{n}{d}}^{\nu(\overline{\Omega})}f^*\Big(\frac{|\Omega|}{\nu(\overline{\Omega})}s\Big)s^{-1+\frac{m}{n}}\,\d s\Bigg\|_{Z_2(0,\nu(\overline{\Omega}))}\lesssim\|f^*\|_{Z_1(0,|\Omega|)}\quad\text{for every $f\in\M(\Omega)$}
\end{equation}
thanks to \citep[Theorems~4.1~and~4.3]{CPS:20}. Using this characterization, we readily obtain that the rearrangement-invariant space $X_Y(\Omega)$ induced by the rearrangement-invariant function norm
\begin{equation}\label{prop:optimalpair:optimaldomainnorm}
\Mpl(\Omega)\ni f\mapsto\sup\limits_{g\sim f}\Bigg\|\int_{\nu(\overline{\Omega})^{1-\frac{n}{d}}t^\frac{n}{d}}^{\nu(\overline{\Omega})}g\Big(\frac{|\Omega|}{\nu(\overline{\Omega})}s\Big)s^{-1+\frac{m}{n}}\,\d s\Bigg\|_{Y(0,\nu(\overline{\Omega}))},
\end{equation}
where the supremum is taken over all $g\in\Mpl(0,|\Omega|)$ equimeasurable with $f$, is the optimal domain space for $Y(\overline{\Omega}, \nu)$ in $W^mX(\Omega)\hookrightarrow Y(\overline{\Omega}, \nu)$. The fact that \eqref{prop:optimalpair:optimaldomainnorm} is indeed a rearrangement-invariant function norm can be proved similarly to \citep[Theorem~3.3]{KePi:06}.

Second, assume that the space $Y(\overline{\Omega}, \nu)$ is the optimal target space for $X_Y(\Omega)$ in the Sobolev embedding $W^mX_Y(\Omega)\hookrightarrow Z(\overline{\Omega}, \nu)$. Set $\alpha=1-\frac{n-m}{d}\in[0,1)$ and $T=T_{d,n,m}$. We have that
\begin{equation}\label{prop:optimalpair:optrangeforoptdomain}
\|g\|_{Y'(0,\nu(\overline{\Omega}))}\approx\Bigg\|s^{-1+\frac{m}{n}}\int_0^{|\Omega|^{1-\frac{d}{n}}s^\frac{d}{n}}g^*\Big(\frac{\nu(\overline{\Omega})}{|\Omega|}t\Big)\,\d t\Bigg\|_{X_Y'(0,|\Omega|)}\quad\text{for every $g\in\M(0,\nu(\overline{\Omega}))$}
\end{equation}
thanks to \citep[Theorem~4.4]{CPS:20}. Observe that, for every $g\in\M(0,\nu(\overline{\Omega}))$, the function $Tg$ is nonincreasing on $(0, \nu(\overline{\Omega}))$. Consequently, for every $g\in\M(0,\nu(\overline{\Omega}))$,
\begin{equation}\label{prop:optimalpair:odhadnasupremalni}
\begin{aligned}
\|Tg\|_{Y'(0,\nu(\overline{\Omega}))}&\approx\Bigg\|s^{-1+\frac{m}{n}}\int_0^{|\Omega|^{1-\frac{d}{n}}s^\frac{d}{n}}Tg\Big(\frac{\nu(\overline{\Omega})}{|\Omega|}t\Big)\,\d t\Bigg\|_{X_Y'(0,|\Omega|)}\\
&\lesssim\Bigg\|s^{-1+\frac{m}{n}}\int_0^{|\Omega|^{1-\frac{d}{n}}s^\frac{d}{n}}t^{-\alpha}\sup\limits_{\tau\in\big[t,|\Omega|^{1-\frac{d}{n}}s^\frac{d}{n}\big]}\tau^\alpha g^*\Big(\frac{\nu(\overline{\Omega})}{|\Omega|}\tau\Big)\,\d t\Bigg\|_{X_Y'(0,|\Omega|)}\\
&\quad+\Bigg\|s^{-1+\frac{m}{n}}\int_0^{|\Omega|^{1-\frac{d}{n}}s^\frac{d}{n}}t^{-\alpha}\sup\limits_{\tau\in\big[|\Omega|^{1-\frac{d}{n}}s^\frac{d}{n},|\Omega|\big)}\tau^\alpha g^*\Big(\frac{\nu(\overline{\Omega})}{|\Omega|}\tau\Big)\,\d t\Bigg\|_{X_Y'(0,|\Omega|)}.
\end{aligned}
\end{equation}
As for the first term, using \citep[Lemma~3.3(ii)]{CP:16}, we obtain that
\begin{align*}
&\Bigg\|s^{-1+\frac{m}{n}}\int_0^{|\Omega|^{1-\frac{d}{n}}s^\frac{d}{n}}t^{-\alpha}\sup\limits_{\tau\in\big[t,|\Omega|^{1-\frac{d}{n}}s^\frac{d}{n}\big]}\tau^\alpha g^*\Big(\frac{\nu(\overline{\Omega})}{|\Omega|}\tau\Big)\,\d t\Bigg\|_{X_Y'(0,|\Omega|)}\\
&\lesssim\Bigg\|s^{-1+\frac{m}{n}}\int_0^{|\Omega|^{1-\frac{d}{n}}s^\frac{d}{n}}g^*\Big(\frac{\nu(\overline{\Omega})}{|\Omega|}t\Big)\,\d t\Bigg\|_{X_Y'(0,|\Omega|)}
\end{align*}
for every $g\in\M(0,\nu(\overline{\Omega}))$. As for the second term in \eqref{prop:optimalpair:odhadnasupremalni}, we have that
\begin{align*}
&\Bigg\|s^{-1+\frac{m}{n}}\int_0^{|\Omega|^{1-\frac{d}{n}}s^\frac{d}{n}}t^{-\alpha}\sup\limits_{\tau\in\big[|\Omega|^{1-\frac{d}{n}}s^\frac{d}{n},|\Omega|\big)}\tau^\alpha g^*\Big(\frac{\nu(\overline{\Omega})}{|\Omega|}\tau\Big)\,\d t\Bigg\|_{X_Y'(0,|\Omega|)}\\
&\approx\Bigg\|s^{-1+\frac{m}{n}+\frac{d}{n}(1-\alpha)}\sup\limits_{\tau\in\big[|\Omega|^{1-\frac{d}{n}}s^\frac{d}{n},|\Omega|\big)}\tau^\alpha g^*\Big(\frac{\nu(\overline{\Omega})}{|\Omega|}\tau\Big)\Bigg\|_{X_Y'(0,|\Omega|)}\\
&\approx\Bigg\|\sup\limits_{\tau\in\big[s,|\Omega|\big)}\tau^{\alpha\frac{d}{n}} g^*\Bigg(\frac{\nu(\overline{\Omega})}{|\Omega|^\frac{d}{n}}\tau^\frac{d}{n}\Bigg)\Bigg\|_{X_Y'(0,|\Omega|)}\lesssim\Bigg\|s^{\alpha\frac{d}{n}} g^*\Bigg(\frac{\nu(\overline{\Omega})}{|\Omega|^\frac{d}{n}}s^\frac{d}{n}\Bigg)\Bigg\|_{X_Y'(0,|\Omega|)}\\
&\leq\Bigg\|s^{\alpha\frac{d}{n}} g^{**}\Bigg(\frac{\nu(\overline{\Omega})}{|\Omega|^\frac{d}{n}}s^\frac{d}{n}\Bigg)\Bigg\|_{X_Y'(0,|\Omega|)}\approx\Bigg\|s^{-1+\frac{m}{n}}\int_0^{|\Omega|^{1-\frac{d}{n}}s^\frac{d}{n}}g^*\Big(\frac{\nu(\overline{\Omega})}{|\Omega|}t\Big)\,\d t\Bigg\|_{X_Y'(0,|\Omega|)}
\end{align*}
for every $g\in\M(0,\nu(\overline{\Omega}))$, where the last but one inequality follows from a simple modification of \citep[Lemma~4.10]{EMMP:20}. Hence, combining these two chains of inequalities with \eqref{prop:optimalpair:odhadnasupremalni} and \eqref{prop:optimalpair:optrangeforoptdomain}, we obtain that $T$ is bounded on $Y'(0,\nu(\overline{\Omega}))$.

Finally, assume that $T$ is bounded on $Y'(0,\nu(\overline{\Omega}))$. Let $Z(\overline{\Omega}, \nu)$ be the optimal target space for $X_Y(\Omega)$ in the Sobolev embedding $W^mX_Y(\Omega)\hookrightarrow Z(\overline{\Omega}, \nu)$. Its existence is guaranteeed by \citep[Theorem~4.4]{CPS:20}. Since $X_Y(\Omega)$ is the optimal domain space for $Y(\overline{\Omega}, \nu)$ and $Z(\overline{\Omega}, \nu)$ is the optimal target space for $X_Y(\Omega)$, we have that $Z(\overline{\Omega}, \nu)\hookrightarrow Y(\overline{\Omega}, \nu)$. Hence, for every $f\in\M(\Omega)$,
\begin{equation}\label{prop:optimalpair:equiYZLHS}
\begin{aligned}
&\Bigg\|\int_{\nu(\overline{\Omega})^{1-\frac{n}{d}}t^\frac{n}{d}}^{\nu(\overline{\Omega})}f^*\Big(\frac{|\Omega|}{\nu(\overline{\Omega})}s\Big)s^{-1+\frac{m}{n}}\,\d s\Bigg\|_{Y(0,\nu(\overline{\Omega}))}\\
&\lesssim\Bigg\|\int_{\nu(\overline{\Omega})^{1-\frac{n}{d}}t^\frac{n}{d}}^{\nu(\overline{\Omega})}f^*\Big(\frac{|\Omega|}{\nu(\overline{\Omega})}s\Big)s^{-1+\frac{m}{n}}\,\d s\Bigg\|_{Z(0,\nu(\overline{\Omega}))}\\
&\lesssim\|f^*\|_{X_Y(0,|\Omega|)} = \|f\|_{X_Y(\Omega)}
\end{aligned}
\end{equation}
thanks to \eqref{prop:optimalpair:reductionprinciple} with $Z_1(\Omega)=X_Y(\Omega)$ and $Z_2(\overline{\Omega}, \nu)=Z(\overline{\Omega}, \nu)$. Now, fix any $f\in\M(\Omega)$ and let $g\in\Mpl(0,|\Omega|)$ be equimeasurable with $f$. Observe that, for each $h\in\M(0,\nu(\overline{\Omega}))$, the function
\begin{equation*}
(0,|\Omega|)\ni s\mapsto s^{-1+\frac{m}{n}}\int_0^{|\Omega|^{1-\frac{d}{n}}s^\frac{d}{n}}Th\Big(\frac{\nu(\overline{\Omega})}{|\Omega|}t\Big)\,\d t
\end{equation*}
is nonincreasing on $(0,|\Omega|)$, for it is a constant multiple of the integral mean of a nonincreasing function with respect to the measure $t^{-\alpha}\,\d t$. Using the boundedness of $T$ on $Y'(0,\nu(\overline{\Omega}))$ in the final inequality, we obtain that
\begin{align*}
&\Bigg\|\int_{\nu(\overline{\Omega})^{1-\frac{n}{d}}t^\frac{n}{d}}^{\nu(\overline{\Omega})}g\Big(\frac{|\Omega|}{\nu(\overline{\Omega})}s\Big)s^{-1+\frac{m}{n}}\,\d s\Bigg\|_{Y(0,\nu(\overline{\Omega}))}\\
&=\sup\limits_{\|h\|_{Y'(0,\nu(\overline{\Omega}))}\leq1}\int_0^{\nu(\overline{\Omega})}h^*(t)\int_{\nu(\overline{\Omega})^{1-\frac{n}{d}}t^\frac{n}{d}}^{\nu(\overline{\Omega})}g\Big(\frac{|\Omega|}{\nu(\overline{\Omega})}s\Big)s^{-1+\frac{m}{n}}\,\d s\,\d t\\
&\approx\sup\limits_{\|h\|_{Y'(0,\nu(\overline{\Omega}))}\leq1}\int_0^{|\Omega|}g(s)s^{-1+\frac{m}{n}}\int_0^{|\Omega|^{1-\frac{d}{n}}s^\frac{d}{n}}h^*\Big(\frac{\nu(\overline{\Omega})}{|\Omega|}t\Big)\,\d t\,\d s\\
&\lesssim\sup\limits_{\|h\|_{Y'(0,\nu(\overline{\Omega}))}\leq1}\int_0^{|\Omega|}g(s)s^{-1+\frac{m}{n}}\int_0^{|\Omega|^{1-\frac{d}{n}}s^\frac{d}{n}}Th\Big(\frac{\nu(\overline{\Omega})}{|\Omega|}t\Big)\,\d t\,\d s\\
&\leq\sup\limits_{\|h\|_{Y'(0,\nu(\overline{\Omega}))}\leq1}\int_0^{|\Omega|}f^*(s)s^{-1+\frac{m}{n}}\int_0^{|\Omega|^{1-\frac{d}{n}}s^\frac{d}{n}}Th\Big(\frac{\nu(\overline{\Omega})}{|\Omega|}t\Big)\,\d t\,\d s\\
&\approx\sup\limits_{\|h\|_{Y'(0,\nu(\overline{\Omega}))}\leq1}\int_0^{\nu(\overline{\Omega})}Th(t)\int_{\nu(\overline{\Omega})^{1-\frac{n}{d}}t^\frac{n}{d}}^{\nu(\overline{\Omega})}f^*\Big(\frac{|\Omega|}{\nu(\overline{\Omega})}s\Big)s^{-1+\frac{m}{n}}\,\d s\,\d t\\
&\leq\Bigg\|\int_{\nu(\overline{\Omega})^{1-\frac{n}{d}}t^\frac{n}{d}}^{\nu(\overline{\Omega})}f^*\Big(\frac{|\Omega|}{\nu(\overline{\Omega})}s\Big)s^{-1+\frac{m}{n}}\,\d s\Bigg\|_{Y(0,\nu(\overline{\Omega}))}\sup\limits_{\|h\|_{Y'(0,\nu(\overline{\Omega}))}\leq1}\|Th\|_{Y'(0,\nu(\overline{\Omega}))}\\
&\lesssim\Bigg\|\int_{\nu(\overline{\Omega})^{1-\frac{n}{d}}t^\frac{n}{d}}^{\nu(\overline{\Omega})}f^*\Big(\frac{|\Omega|}{\nu(\overline{\Omega})}s\Big)s^{-1+\frac{m}{n}}\,\d s\Bigg\|_{Y(0,\nu(\overline{\Omega}))},
\end{align*}
where the equality follows from \eqref{prel:assocnormwithstars}, the equivalences follow from Fubini's theorem coupled with a change of variables, the second inequality follows from \eqref{prel:HL} and the equimeasurability of $g$ with $f$, and the last but one inequality follows from \eqref{prel:holder}. Hence
\begin{equation}\label{prop:optimalpair:equiYZRHS}
\|f\|_{X_Y(\Omega)}\lesssim\Bigg\|\int_{\nu(\overline{\Omega})^{1-\frac{n}{d}}t^\frac{n}{d}}^{\nu(\overline{\Omega})}f^*\Big(\frac{|\Omega|}{\nu(\overline{\Omega})}s\Big)s^{-1+\frac{m}{n}}\,\d s\Bigg\|_{Y(0,\nu(\overline{\Omega}))}
\end{equation}
for every $f\in\M(\Omega)$. Combining \eqref{prop:optimalpair:equiYZLHS} and \eqref{prop:optimalpair:equiYZRHS}, we obtain that
\begin{equation}\label{prop:optimalpair:equiYZ}
\Bigg\|\int_{\nu(\overline{\Omega})^{1-\frac{n}{d}}t^\frac{n}{d}}^{\nu(\overline{\Omega})}f^*\Big(\frac{|\Omega|}{\nu(\overline{\Omega})}s\Big)s^{-1+\frac{m}{n}}\,\d s\Bigg\|_{Y(0,\nu(\overline{\Omega}))}\approx\Bigg\|\int_{\nu(\overline{\Omega})^{1-\frac{n}{d}}t^\frac{n}{d}}^{\nu(\overline{\Omega})}f^*\Big(\frac{|\Omega|}{\nu(\overline{\Omega})}s\Big)s^{-1+\frac{m}{n}}\,\d s\Bigg\|_{Z(0,\nu(\overline{\Omega}))}
\end{equation}
for every $f\in\M(\Omega)$. Let $v$ be a simple function on $(0,\nu(\overline{\Omega}))$ having the form $v=\sum\limits_{j=0}^Nc_j\chi_{(0,t_j)}$, where $N\in\N$, $c_j>0$, $0<t_1<\cdots<t_N<\nu(\overline{\Omega})$. By straightforwardly modifying \citep[Lemma~4.9]{EMMP:20} and using \eqref{prop:optimalpair:equiYZ}, we have that
\begin{equation}\label{prop:optimalpair:YZonsimple}
\|v\|_{Y(0,\nu(\overline{\Omega}))}\approx\|v\|_{Z(0,\nu(\overline{\Omega}))}.
\end{equation}
Since every nonnegative, nonincreasing function on $(0,\nu(\overline{\Omega}))$ is the limit of a nondecreasing sequence of such functions, it follows from \eqref{prop:optimalpair:YZonsimple} and property (P3) of rearrangement-invariant function norms that
\begin{equation*}
\|g^*\|_{Y(0, \nu(\overline{\Omega}))}\approx\|g^*\|_{Z(0, \nu(\overline{\Omega}))}
\end{equation*}
for every $g\in\M(\overline{\Omega}, \nu)$. Hence $Y(\overline{\Omega}, \nu)=Z(\overline{\Omega}, \nu)$, that is, $Y(\overline{\Omega}, \nu)$ is the optimal target space for $X_Y(\Omega)$.
\end{proof}

\begin{remark}\label{rem:boundednessofT}
Note that, in the special case where $d=n-m$, the operator $T_{d,m,n}$ collapses into the identity operator, and thus it is plainly bounded on any rearrangement-invariant space.

When $Y(\overline{\Omega}, \nu)=L^{p,q;\alpha}(\overline{\Omega}, \nu)$ is a Lorentz--Zygmund space, the operator $T_{d,m,n}$ is bounded on $Y'(0, \nu(\overline{\Omega}))$ if and only if $p\in(\frac{d}{n-m},\infty]$, or $p=\frac{d}{n-m}$, $q=1$ and $\alpha\geq0$ (see~\cite[Proposition~5.4]{Mih:20}, cf.~\citep[Theorem~3.2]{GOP}).
\end{remark}

\begin{theorem}\label{thm:answtoq:noncompactintolorentzendpoint}
Let $\Omega\subseteq\rn$ be a bounded Lipschitz domain, $m\in\N$, $m<n$, $d\in[n-m,n]$ and $\nu$ a $d$-Ahlfors measure on $\overline{\Omega}$. Assume that $Z(\overline{\Omega}, \nu)$ is a rearrangement-invariant space over
$(\overline{\Omega}, \nu)$ and $\Lambda_{\psi}(\overline{\Omega}, \nu)$ is a Lorentz endpoint space such that $\lim\limits_{t\to0^+}\frac{\psi(t)}{\varphi_{Z}(t)} = 0$. Assume that
$Z(\overline{\Omega}, \nu)\not\subseteq\Lambda_{\psi}(\overline{\Omega}, \nu)$. Furthermore, assume that the operator $T_{d,m,n}$, defined by \eqref{prop:optimalpair:defsupop}, is bounded on both
$M_{\frac{t}{\psi(t)}}(0,\nu(\overline{\Omega}))$ and $Z'(0,\nu(\overline{\Omega}))$. Let $X_{\Lambda_{\psi}}(\Omega)$ and $X_{Z}(\Omega)$ be the optimal domain spaces in $W^{m}X(\Omega)\hookrightarrow
\Lambda_{\psi}(\overline{\Omega}, \nu)$ and $W^{m}X(\Omega)\hookrightarrow Z(\overline{\Omega}, \nu)$, respectively. Set $X(\Omega)=X_{\Lambda_{\psi}}(\Omega)\cap X_{Z}(\Omega)$ and
$Y_X(\overline{\Omega}, \nu)=\Lambda_{\psi}(\overline{\Omega}, \nu)\cap Z(\overline{\Omega}, \nu)$. The following facts are true:
\begin{itemize}
\item The spaces $X(\Omega)$ and $Y_X(\overline{\Omega}, \nu)$ are mutually optimal in $W^mX(\Omega)\hookrightarrow Y_X(\overline{\Omega}, \nu)$;
\item $W^mX(\Omega)\hookrightarrow \Lambda_\psi(\overline{\Omega}, \nu)$ non-compactly;
\item $Y_X(\overline{\Omega}, \nu)\hookrightarrow\Lambda_{\psi}(\overline{\Omega}, \nu)$;
\item $\lim\limits_{t\to0^+}\frac{\psi(t)}{\varphi_{Y_X}(t)}=0$.
\end{itemize}
\end{theorem}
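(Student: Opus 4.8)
The plan is to derive the first assertion (mutual optimality) from Proposition~\ref{prop:optimalpair} applied with $Y=Y_X(\overline{\Omega}, \nu)=\Lambda_\psi(\overline{\Omega}, \nu)\cap Z(\overline{\Omega}, \nu)$, and then to read off the remaining three assertions. Using that proposition requires two inputs: that $X(\Omega)$ is the optimal domain space $X_{Y_X}(\Omega)$ for $Y_X(\overline{\Omega}, \nu)$, and that $T_{d,m,n}$ is bounded on the representation space of $Y_X'(\overline{\Omega}, \nu)$.

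For the first input, observe that for every rearrangement-invariant space $E(\Omega)$ the norm of $\Lambda_\psi\cap Z$ (and of its representation space) is the pointwise maximum of the corresponding norms of $\Lambda_\psi$ and $Z$; hence the reduction-principle inequality \eqref{prop:optimalpair:reductionprinciple} for the pair $(E,\Lambda_\psi\cap Z)$ is, up to multiplicative constants, the conjunction of those for $(E,\Lambda_\psi)$ and $(E,Z)$. By \citep[Theorems~4.1 and~4.3]{CPS:20} this says that $W^mE(\Omega)\hookrightarrow\Lambda_\psi\cap Z$ if and only if both $W^mE(\Omega)\hookrightarrow\Lambda_\psi$ and $W^mE(\Omega)\hookrightarrow Z$, that is, if and only if $E(\Omega)\hookrightarrow X_{\Lambda_\psi}(\Omega)$ and $E(\Omega)\hookrightarrow X_{Z}(\Omega)$, that is, $E(\Omega)\hookrightarrow X_{\Lambda_\psi}(\Omega)\cap X_{Z}(\Omega)=X(\Omega)$. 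Thus $X(\Omega)$ has the defining property of the optimal domain space for $Y_X(\overline{\Omega}, \nu)$, so $X_{Y_X}(\Omega)=X(\Omega)$.

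For the second input, first note that $Y_X'=(\Lambda_\psi\cap Z)'=(\Lambda_\psi)'+Z'$, and that the associate space of the Lorentz endpoint space $\Lambda_\psi$ is equivalent to the Marcinkiewicz endpoint space $M_{t/\psi(t)}$ (recall $\varphi_{\Lambda_\psi}\approx\psi$, \eqref{prel:fundamentalfuncsidentity} and \eqref{prel:endpoints}); consequently the representation space of $Y_X'(\overline{\Omega}, \nu)$ is $M_{t/\psi(t)}(0,\nu(\overline{\Omega}))+Z'(0,\nu(\overline{\Omega}))$. It therefore suffices to check that $T_{d,m,n}$ is bounded on a sum of two rearrangement-invariant spaces whenever it is bounded on each of them. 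This follows from the elementary pointwise bound $T_{d,m,n}h(t)\le T_{d,m,n}h_1(t/2)+T_{d,m,n}h_2(t/2)$, valid for any splitting $h=h_1+h_2$ — it is obtained from $(h_1+h_2)^*(s)\le h_1^*(s/2)+h_2^*(s/2)$ and the positive homogeneity of the weight $s\mapsto s^{1-\frac{n-m}{d}}$ in \eqref{prop:optimalpair:defsupop} — together with the boundedness of the dilation $g\mapsto g(\cdot/2)$ on every rearrangement-invariant space over $(0,\nu(\overline{\Omega}))$. Hence $T_{d,m,n}$ is bounded on $Y_X'(0,\nu(\overline{\Omega}))$, and Proposition~\ref{prop:optimalpair} gives that $X(\Omega)$ and $Y_X(\overline{\Omega}, \nu)$ are mutually optimal.

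The remaining items are now short. The embedding $Y_X(\overline{\Omega}, \nu)\hookrightarrow\Lambda_\psi(\overline{\Omega}, \nu)$ is immediate from $Y_X=\Lambda_\psi\cap Z$. Since $\varphi_{Y_X}=\max\{\varphi_{\Lambda_\psi},\varphi_Z\}\ge\varphi_Z$, we get $0\le\psi(t)/\varphi_{Y_X}(t)\le\psi(t)/\varphi_Z(t)\to0$ as $t\to0^+$; the monotonicity of $\varphi_Z$ together with $\lim_{t\to0^+}\psi(t)/\varphi_Z(t)=0$ also forces $\lim_{t\to0^+}\psi(t)=0$, so that $\Lambda_\psi(\overline{\Omega}, \nu)\ne L^\infty(\overline{\Omega}, \nu)$. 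Finally, $X(\Omega)\hookrightarrow X_{\Lambda_\psi}(\Omega)$ yields $W^mX(\Omega)\hookrightarrow\Lambda_\psi(\overline{\Omega}, \nu)$; by \citep[Theorem~4.1]{CM:20} — applicable because $\Lambda_\psi\ne L^\infty$ and, by the mutual optimality just proved, $Y_X(\overline{\Omega}, \nu)$ is the optimal target space for $X(\Omega)$ — this embedding is compact if and only if $Y_X(\overline{\Omega}, \nu)\overset{*}{\hookrightarrow}\Lambda_\psi(\overline{\Omega}, \nu)$; but $Y_X=\Lambda_\psi\cap Z$ and $Z(\overline{\Omega}, \nu)\not\subseteq\Lambda_\psi(\overline{\Omega}, \nu)$, so the latter almost-compact embedding fails by \citep[Lemma~3.7]{F-MMP:10}. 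Hence $W^mX(\Omega)\hookrightarrow\Lambda_\psi(\overline{\Omega}, \nu)$ is not compact. I expect the main obstacle to be the verification that $T_{d,m,n}$ respects sums of spaces (the dilation estimate above) and the accompanying identification of $Y_X'$; once Proposition~\ref{prop:optimalpair} is in hand, everything else is routine.
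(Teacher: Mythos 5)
Your proof is correct and follows essentially the same route as the paper's: mutual optimality via Proposition~\ref{prop:optimalpair} applied to $\Lambda_\psi\cap Z$ after checking that $T_{d,m,n}$ is bounded on $M_{\frac{t}{\psi(t)}}+Z'$ and that $X_{\Lambda_\psi\cap Z}=X_{\Lambda_\psi}\cap X_Z$, and then noncompactness from \citep[Theorem~4.1]{CM:20} together with \citep[Lemma~3.7]{F-MMP:10}. The one step you use without justification is the identity $(\Lambda_\psi\cap Z)'=\Lambda_\psi'+Z'$, which is not automatic and requires that $\Lambda_\psi$ have absolutely continuous norm (true here because $\lim_{t\to0^+}\psi(t)=0$, a fact you establish elsewhere in your argument); your added details --- the dilation estimate showing $T_{d,m,n}$ is bounded on a sum of spaces, and the derivation of the optimal domain of an intersection from the reduction principle rather than from \eqref{prop:optimalpair:optimaldomainfunc} --- are correct elaborations of points the paper merely asserts.
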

\begin{proof}
First, as $\Lambda_{\psi}(\overline{\Omega}, \nu)$ has absolutely continuous norm, we have that $\left(\Lambda_{\psi}(\overline{\Omega}, \nu)\cap Z(\overline{\Omega}, \nu)\right)'=M_{\frac{t}{\psi(t)}}(\overline{\Omega}, \nu)+Z'(\overline{\Omega}, \nu)$ (see~\citep[Chapter~3, Exercise~5]{BS}). Since the supremum operator $T_{d,m,n}$ is bounded on both $M_{\frac{t}{\psi(t)}}(0,\nu(\overline{\Omega}))$ and $Z'(0,\nu(\overline{\Omega}))$, it is also bounded on $M_{\frac{t}{\psi(t)}}(0,\nu(\overline{\Omega})) + Z'(0,\nu(\overline{\Omega}))$. Thanks to \hyperref[prop:optimalpair]{Proposition~\ref*{prop:optimalpair}}, we have that the spaces in the embeddings $W^mX_{\Lambda_{\psi}\cap Z}(\Omega)\hookrightarrow\Lambda_{\psi}(\overline{\Omega}, \nu)\cap Z(\overline{\Omega}, \nu)$, $W^mX_{\Lambda_{\psi}}(\Omega)\hookrightarrow\Lambda_{\psi}(\overline{\Omega}, \nu)$ and $W^mX_{Z}(\Omega)\hookrightarrow Z(\overline{\Omega}, \nu)$ are mutually optimal in each of these embeddings. Moreover, it follows from \eqref{prop:optimalpair:optimaldomainfunc} that $X_{\Lambda_{\psi}\cap Z}(\Omega)=X_{\Lambda_{\psi}}(\Omega)\cap X_{Z}(\Omega)$. Hence the spaces $X(\Omega)$ and $Y_X(\overline{\Omega}, \nu)$ are mutually optimal in the embedding $W^mX(\Omega)\hookrightarrow Y_X(\overline{\Omega}, \nu)$, where $X(\Omega)=X_{\Lambda_{\psi}}(\Omega)\cap X_{Z}(\Omega)$ and $Y_X(\overline{\Omega}, \nu)=\Lambda_{\psi}(\overline{\Omega}, \nu)\cap Z(\overline{\Omega}, \nu)$.

Next, as $\lim\limits_{t\to0^+}\frac{\psi(t)}{\varphi_{Z}(t)} = 0$ and $\varphi_{Y_X}\approx\max\{\psi,\varphi_Z\}$, we have that $\lim\limits_{t\to0^+}\frac{\psi(t)}{\varphi_{Y_X}(t)}=0$.

Last, thanks to the fact that $Z(\overline{\Omega}, \nu)\not\subseteq\Lambda_{\psi}(\overline{\Omega}, \nu)$, the embedding $\Lambda_{\psi}(\overline{\Omega}, \nu)\cap Z(\overline{\Omega}, \nu)\hookrightarrow\Lambda_{\psi}(\overline{\Omega}, \nu)$ is not almost compact (\citep[Lemma~3.7]{F-MMP:10}), and so $W^{m}X(\Omega)\hookrightarrow \Lambda_\psi(\overline{\Omega}, \nu)$ non-compactly owing to \citep[Theorem~4.1]{CM:20}.
\end{proof}

We conclude this paper with a concrete application of \hyperref[thm:answtoq:noncompactintolorentzendpoint]{Theorem~\ref*{thm:answtoq:noncompactintolorentzendpoint}}, which covers a relatively wide class of function spaces.

\begin{theorem}
Let $\Omega\subseteq\rn$ be a bounded Lipschitz domain, $m\in\N$, $m<n$, $d\in[n-m,n]$ and $\nu$ a $d$-Ahlfors measure on $\overline{\Omega}$. Let $p\in(\frac{d}{n-m},\infty]$ and $q\in(1,\infty]$. Let $\alpha\in\R$ if $p<\infty$, or $\alpha+1<0$ if $p=\infty$. Set
\begin{equation*}
X(\Omega)=\begin{cases}
L^{\frac{np}{d+mp},1;\alpha}(\Omega)\cap L^{\frac{np}{d+mp},q;\alpha+1-\frac1{q}}(\Omega)\quad&\text{if $p\in(\frac{d}{n-m},\infty)$,}\\
L^{\frac{n}{m},1;\alpha+1}(\Omega)\cap X_q(\Omega)\quad&\text{if $p=\infty$ and $\alpha+1<0$},
\end{cases}
\end{equation*}
where $X_q(\Omega)$ is the rearrangement-invariant space over $\Omega$ satisfying
\begin{equation*}
\|f\|_{X_q(\Omega)}\approx\Big\|t^{-\frac1{q}}\ell^{\alpha+1-\frac1{q}}(t)\ell\ell^{1-\frac1{q}}(t)\int_{|\Omega|^{1-\frac{n}{d}}t^\frac{n}{d}}^{|\Omega|}f^*(s)s^{-1+\frac{m}{n}}\,\d s\Big\|_{L^q(0,|\Omega|)}.
\end{equation*}
Set
\begin{equation*}
Y_X(\overline{\Omega}, \nu)=\begin{cases}
L^{p,1;\alpha}(\overline{\Omega}, \nu)\cap L^{p,q;\alpha+1-\frac1{q}}(\overline{\Omega}, \nu)\quad&\text{if $p\in(\frac{d}{n-m},\infty)$,}\\
L^{\infty,1;\alpha}(\overline{\Omega}, \nu)\cap L^{\infty, q;\alpha+1-\frac1{q},1-\frac1{q}}(\overline{\Omega}, \nu)\quad&\text{if $p=\infty$ and $\alpha+1<0$}.
\end{cases}
\end{equation*}
The following facts are true:
\begin{itemize}
\item The spaces $X(\Omega)$ and $Y_X(\overline{\Omega}, \nu)$ are mutually optimal in $W^mX(\Omega)\hookrightarrow Y_X(\overline{\Omega}, \nu)$;
\item $W^mX(\Omega)\hookrightarrow \Lambda_\psi(\overline{\Omega}, \nu)$ non-compactly, where $\Lambda_\psi(\overline{\Omega}, \nu)$ is the Lorentz endpoint space whose fundamental function is equivalent to
\begin{equation}\label{thm:answtoq:noncompactintolorentzendpointLZ:psi}
\psi(t)\approx\begin{cases}
t^\frac1{p}\ell(t)^{\alpha}\quad&\text{if $p\in(\frac{d}{n-m},\infty)$,}\\
\ell\ell(t)^{\alpha+1}\quad&\text{if $p=\infty$ and $\alpha+1<0$;}
\end{cases}
\end{equation}
\item $Y_X(\overline{\Omega}, \nu)\hookrightarrow\Lambda_\psi(\overline{\Omega}, \nu)$;
\item $\lim\limits_{t\to0^+}\frac{\psi(t)}{\varphi_{Y_X}(t)}=0$.
\end{itemize}
\end{theorem}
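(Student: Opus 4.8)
Here the plan is to derive the theorem directly from \hyperref[thm:answtoq:noncompactintolorentzendpoint]{Theorem~\ref*{thm:answtoq:noncompactintolorentzendpoint}}, the work consisting of exhibiting, in each of the listed cases, a Lorentz endpoint space $\Lambda_\psi(\overline{\Omega}, \nu)$ and a rearrangement-invariant space $Z(\overline{\Omega}, \nu)$ whose intersection is the claimed $Y_X(\overline{\Omega}, \nu)$, and then verifying the hypotheses of that theorem. Concretely, for $p\in(\frac{d}{n-m},\infty)$ one takes $\Lambda_\psi(\overline{\Omega}, \nu)=L^{p,1;\alpha}(\overline{\Omega}, \nu)$ and $Z(\overline{\Omega}, \nu)=L^{p,q;\alpha+1-\frac1{q}}(\overline{\Omega}, \nu)$, while for $p=\infty$ one takes $\Lambda_\psi(\overline{\Omega}, \nu)=L^{\infty,1;\alpha}(\overline{\Omega}, \nu)$ (valid since $\alpha+1<0$) and $Z(\overline{\Omega}, \nu)=L^{\infty,q;\alpha+1-\frac1{q},1-\frac1{q}}(\overline{\Omega}, \nu)$. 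Computing the Lebesgue--Stieltjes integral defining the $\Lambda$-norm one checks that $L^{p,1;\alpha}$ (respectively $L^{\infty,1;\alpha}$) is equivalent to a Lorentz endpoint space whose fundamental function is equivalent to the $\psi$ of \eqref{thm:answtoq:noncompactintolorentzendpointLZ:psi}, and that $\varphi_{Y_X}\approx\max\{\psi,\varphi_Z\}=\varphi_Z$, because $\psi$ decays strictly faster than $\varphi_Z$ near $0$. Thus, once the hypotheses of \hyperref[thm:answtoq:noncompactintolorentzendpoint]{Theorem~\ref*{thm:answtoq:noncompactintolorentzendpoint}} have been checked and the optimal domain spaces identified, all four assertions follow.

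The three substantive hypotheses are handled as follows. The relation $\lim_{t\to0^+}\psi(t)/\varphi_Z(t)=0$ is a one-line computation --- the quotient is a negative power of $\ell$ when $p<\infty$ and a negative power of $\ell\ell$ when $p=\infty$, the exponent being $\frac1{q}-1<0$ --- and can also be quoted from \citep[Lemma~3.7]{OP}. The non-inclusion $Z(\overline{\Omega}, \nu)\not\subseteq\Lambda_\psi(\overline{\Omega}, \nu)$ is the borderline instance of the embedding theorem for (generalized) Lorentz--Zygmund spaces \citep[Theorem~4.5]{OP}: the putative embedding $L^{p,q;\alpha+1-\frac1{q}}\hookrightarrow L^{p,1;\alpha}$ (and its $p=\infty$ analogue) fails precisely because the pertinent comparison of logarithmic exponents holds only with equality. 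Finally, $M_{\frac{t}{\psi(t)}}(\overline{\Omega}, \nu)$ is the associate space of $\Lambda_\psi(\overline{\Omega}, \nu)$ and $Z'(\overline{\Omega}, \nu)$ is the associate space of $Z(\overline{\Omega}, \nu)$, and both $\Lambda_\psi$ and $Z$ are Lorentz--Zygmund spaces with primary index $p>\frac{d}{n-m}$; hence $T_{d,m,n}$ is bounded on $M_{\frac{t}{\psi(t)}}$ and on $Z'$ by \hyperref[rem:boundednessofT]{Remark~\ref*{rem:boundednessofT}} (and its routine extension to three-index Lorentz--Zygmund spaces for $p=\infty$), the case $d=n-m$ being automatic since $T_{d,m,n}$ is then the identity.

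It remains to identify $X_{\Lambda_\psi}(\Omega)$ and $X_Z(\Omega)$. Since $T_{d,m,n}$ is bounded on $(\Lambda_\psi)'$ and on $Z'$, \hyperref[prop:optimalpair]{Proposition~\ref*{prop:optimalpair}} applies to each target, so the pairs $(X_{\Lambda_\psi},\Lambda_\psi)$ and $(X_Z,Z)$ are mutually optimal and the domain norms are given by the explicit functional \eqref{prop:optimalpair:optimaldomainfunc}. Evaluating \eqref{prop:optimalpair:optimaldomainfunc} reduces to estimating a weighted Hardy-type operator in a Lorentz--Zygmund space; this yields $X_{\Lambda_\psi}(\Omega)=L^{\frac{np}{d+mp},1;\alpha}(\Omega)$ and $X_Z(\Omega)=L^{\frac{np}{d+mp},q;\alpha+1-\frac1{q}}(\Omega)$ when $p<\infty$, and $X_{\Lambda_\psi}(\Omega)=L^{\frac{n}{m},1;\alpha+1}(\Omega)$ and $X_Z(\Omega)=X_q(\Omega)$ when $p=\infty$ --- in the latter case $X_q(\Omega)$ is, up to a harmless rescaling, defined to be exactly the space induced by \eqref{prop:optimalpair:optimaldomainfunc} with target $Z(\overline{\Omega}, \nu)$, so nothing need be proved there. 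Because the only $f$-dependent object in \eqref{prop:optimalpair:optimaldomainfunc} is the inner integral, which is independent of the target, the optimal domain space for $\Lambda_\psi(\overline{\Omega}, \nu)\cap Z(\overline{\Omega}, \nu)$ is $X_{\Lambda_\psi}(\Omega)\cap X_Z(\Omega)$; hence the spaces $X(\Omega)$ and $Y_X(\overline{\Omega}, \nu)$ in the statement coincide with those produced by \hyperref[thm:answtoq:noncompactintolorentzendpoint]{Theorem~\ref*{thm:answtoq:noncompactintolorentzendpoint}}, and the four listed facts follow. I expect the main obstacle to be the identification of $X_{\Lambda_\psi}(\Omega)$ as a Lorentz--Zygmund space, i.e.\ proving the two-sided Hardy-type estimate that controls the operator in \eqref{prop:optimalpair:optimaldomainfunc} with the endpoint target $L^{p,1;\alpha}$ (respectively $L^{\infty,1;\alpha}$) by the $L^{\frac{np}{d+mp},1;\alpha}$-norm (respectively the $L^{\frac{n}{m},1;\alpha+1}$-norm) of the datum; the remaining verifications are routine applications of the standard machinery for Lorentz--Zygmund spaces and supremum operators.
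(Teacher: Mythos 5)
Your proposal is correct and takes essentially the same route as the paper: both deduce the theorem from Theorem~\ref{thm:answtoq:noncompactintolorentzendpoint} with $\Lambda_\psi=L^{p,1;\alpha}(\overline{\Omega},\nu)$ and the same choice of $Z(\overline{\Omega},\nu)$, check the three hypotheses via the fundamental-function computation, the borderline failure of the Lorentz--Zygmund embedding (\citep[Theorem~4.5]{OP}) and Remark~\ref{rem:boundednessofT}, and identify the optimal domain spaces by evaluating the Hardy-type functional \eqref{prop:optimalpair:optimaldomainfunc} (which the paper delegates to an external reference). Note only that your computation of $\psi/\varphi_Z$ for $p=\infty$ implicitly uses $\psi(t)\approx\ell(t)^{\alpha+1}=\varphi_{L^{\infty,1;\alpha}}(t)$ rather than the $\ell\ell(t)^{\alpha+1}$ printed in \eqref{thm:answtoq:noncompactintolorentzendpointLZ:psi}, which appears to be a typo in the statement.
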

\begin{proof}
The assertion follows from \hyperref[thm:answtoq:noncompactintolorentzendpoint]{Theorem~\ref*{thm:answtoq:noncompactintolorentzendpoint}} with
\begin{equation*}
Z(\overline{\Omega}, \nu)=\begin{cases}
L^{p,q;\alpha+1-\frac1{q}}(\overline{\Omega}, \nu)\quad&\text{if $p\in(\frac{d}{n-m},\infty)$,}\\
L^{\infty, q;\alpha+1-\frac1{q},1-\frac1{q}}(\overline{\Omega}, \nu)\quad&\text{if $p=\infty$ and $\alpha+1<0$,}
\end{cases}
\end{equation*}
and $\psi$ equal to a concave function on $[0,\nu(\overline{\Omega})]$ satisfying \eqref{thm:answtoq:noncompactintolorentzendpointLZ:psi}. Note that $\Lambda_\psi(\overline{\Omega}, \nu)=L^{p,1;\alpha}(\overline{\Omega}, \nu)$.

As for the assumptions of \hyperref[thm:answtoq:noncompactintolorentzendpoint]{Theorem~\ref*{thm:answtoq:noncompactintolorentzendpoint}} being satisfied, since (e.g.~\cite[Lemma~3.14]{OP})
\begin{equation*}
\varphi_{Z}(t)\approx\begin{cases}
t^\frac1{p}\ell(t)^{\alpha+1-\frac1{q}}\quad&\text{if $p\in(\frac{d}{n-m},\infty)$,}\\
\ell(t)^{\alpha+1}\ell\ell(t)^{1-\frac1{q}}\quad&\text{if $p=\infty$ and $\alpha+1<0$,}
\end{cases}
\end{equation*}
it follows that $\lim\limits_{t\to0^+}\frac{\psi(t)}{\varphi_{Z}(t)} = 0$. Furthermore, we have that $Z(\overline{\Omega}, \nu)\not\subseteq\Lambda_{\psi}(\overline{\Omega}, \nu)$ thanks to \cite[Theorem~4.5]{OP}. Next, as $p>\frac{d}{n-m}$, the supremum operator $T_{d,m,n}$ is bounded on the associate spaces of $Z(0, \nu(\overline{\Omega}))$ and $L^{p,1;\alpha}(0, \nu(\overline{\Omega}))$ (see \hyperref[rem:boundednessofT]{Remark~\ref*{rem:boundednessofT}}). Finally, the description of $X_{L^{p,1;\alpha}}(\Omega)$ and $X_Z(\Omega)$ can be obtained similarly to \citep[Thereom~5.3]{Mih:20}.
\end{proof}

\paragraph{Acknowledgments}
The second author would like to express his sincere gratitude to the Fulbright Program for supporting him and giving him the opportunity to visit the first author at the Ohio State University as a Fulbrighter. The research we conducted during the visit led us to the questions this paper deals with.

\bibliography{non-compact-embeddings}

\begin{thebibliography}{41}
\providecommand{\natexlab}[1]{#1}
\providecommand{\url}[1]{\texttt{#1}}
\expandafter\ifx\csname urlstyle\endcsname\relax
  \providecommand{\doi}[1]{doi: #1}\else
  \providecommand{\doi}{doi: \begingroup \urlstyle{rm}\Url}\fi

\bibitem[Adams and Fournier(2003)]{AF:03}
R.A. Adams and J.J.F. Fournier.
\newblock \emph{{Sobolev spaces}}, volume 140 of \emph{Pure and Applied
  Mathematics (Amsterdam)}.
\newblock Elsevier/Academic Press, Amsterdam, second edition, 2003.
\newblock ISBN 0-12-044143-8.

\bibitem[Bennett and Rudnick(1980)]{BeRu:80}
C.~Bennett and K.~Rudnick.
\newblock On {L}orentz-{Z}ygmund spaces.
\newblock \emph{Dissertationes Math. (Rozprawy Mat.)}, 175:\penalty0 67, 1980.
\newblock ISSN 0012-3862.

\bibitem[Bennett and Sharpley(1988)]{BS}
C.~Bennett and R.C. Sharpley.
\newblock \emph{{Interpolation of operators}}, volume 129 of \emph{Pure and
  Applied Mathematics}.
\newblock Academic Press, Inc., Boston, MA, 1988.
\newblock ISBN 0-12-088730-4.

\bibitem[Brezis and Wainger(1980)]{BW}
H.~Brezis and S.~Wainger.
\newblock A note on limiting cases of {S}obolev embeddings and convolution
  inequalities.
\newblock \emph{Comm. Partial Differential Equations}, 5\penalty0 (7):\penalty0
  773--789, 1980.
\newblock ISSN 0360-5302.
\newblock \doi{10.1080/03605308008820154}.

\bibitem[Brudny\u{\i}(1979)]{B}
J.A. Brudny\u{\i}.
\newblock Rational approximation and imbedding theorems.
\newblock \emph{Dokl. Akad. Nauk SSSR}, 247\penalty0 (2):\penalty0 269--272,
  1979.
\newblock ISSN 0002-3264.

\bibitem[Carro et~al.(1996)Carro, Garc{\'\i}a~del Amo, and Soria]{CGS:96}
M.J. Carro, A.~Garc{\'\i}a~del Amo, and J.~Soria.
\newblock Weak-type weights and normable {L}orentz spaces.
\newblock \emph{Proc. Amer. Math. Soc.}, 124\penalty0 (3):\penalty0 849--857,
  1996.
\newblock ISSN 0002-9939.
\newblock \doi{10.1090/S0002-9939-96-03214-5}.

\bibitem[Cavaliere and Mihula()]{CM:20}
P.~Cavaliere and Z.~Mihula.
\newblock Compactness of {S}obolev-type embeddings with measures.
\newblock Accepted for publication in {\it{Commun.~Contemp.~Math.}}

\bibitem[Cavaliere and Mihula(2019)]{CaMi:19}
P.~Cavaliere and Z.~Mihula.
\newblock Compactness for {S}obolev-type trace operators.
\newblock \emph{Nonlinear Anal.}, 183:\penalty0 42--69, 2019.
\newblock ISSN 0362-546X.
\newblock \doi{10.1016/j.na.2019.01.013}.

\bibitem[Cianchi(1996)]{Ci:96}
A.~Cianchi.
\newblock A sharp embedding theorem for {O}rlicz-{S}obolev spaces.
\newblock \emph{Indiana Univ. Math. J.}, 45\penalty0 (1):\penalty0 39--65,
  1996.
\newblock ISSN 0022-2518.
\newblock \doi{10.1512/iumj.1996.45.1958}.

\bibitem[Cianchi and Pick(2016)]{CP:16}
A.~Cianchi and L.~Pick.
\newblock Optimal {S}obolev trace embeddings.
\newblock \emph{Trans. Amer. Math. Soc.}, 368\penalty0 (12):\penalty0
  8349--8382, 2016.
\newblock ISSN 0002-9947.
\newblock \doi{10.1090/tran/6606}.

\bibitem[Cianchi et~al.(2015)Cianchi, Pick, and Slav\'{\i}kov\'{a}]{CiPiSl:15}
A.~Cianchi, L.~Pick, and L.~Slav\'{\i}kov\'{a}.
\newblock Higher-order {S}obolev embeddings and isoperimetric inequalities.
\newblock \emph{Adv. Math.}, 273:\penalty0 568--650, 2015.
\newblock ISSN 0001-8708.
\newblock \doi{10.1016/j.aim.2014.12.027}.

\bibitem[Cianchi et~al.(2020{\natexlab{a}})Cianchi, Pick, and
  Slav\'{\i}kov\'{a}]{CPS:20}
A.~Cianchi, L.~Pick, and L.~Slav\'{\i}kov\'{a}.
\newblock Sobolev embeddings, rearrangement-invariant spaces and {F}rostman
  measures.
\newblock \emph{Ann. Inst. H. Poincar\'{e} Anal. Non Lin\'{e}aire}, 37\penalty0
  (1):\penalty0 105--144, 2020{\natexlab{a}}.
\newblock ISSN 0294-1449.
\newblock \doi{10.1016/j.anihpc.2019.06.004}.

\bibitem[Cianchi et~al.(2020{\natexlab{b}})Cianchi, Pick, and
  Slav\'{\i}kov\'{a}]{CPS_OrlLor:20}
A.~Cianchi, L.~Pick, and L.~Slav\'{\i}kov\'{a}.
\newblock Sobolev embeddings in {O}rlicz and {L}orentz spaces with measures.
\newblock \emph{J. Math. Anal. Appl.}, 485\penalty0 (2):\penalty0 123827, 31,
  2020{\natexlab{b}}.
\newblock ISSN 0022-247X.
\newblock \doi{10.1016/j.jmaa.2019.123827}.

\bibitem[Cwikel and Pustylnik(2000)]{CP}
M.~Cwikel and E.~Pustylnik.
\newblock Weak type interpolation near ``endpoint'' spaces.
\newblock \emph{J. Funct. Anal.}, 171\penalty0 (2):\penalty0 235--277, 2000.
\newblock ISSN 0022-1236.
\newblock \doi{10.1006/jfan.1999.3502}.

\bibitem[Edmunds et~al.(2000)Edmunds, Kerman, and Pick]{EdKePi:00}
D.E. Edmunds, R.~Kerman, and L.~Pick.
\newblock Optimal {S}obolev imbeddings involving rearrangement-invariant
  quasinorms.
\newblock \emph{J. Funct. Anal.}, 170\penalty0 (2):\penalty0 307--355, 2000.
\newblock ISSN 0022-1236.
\newblock \doi{10.1006/jfan.1999.3508}.

\bibitem[Edmunds et~al.(2020)Edmunds, Mihula, Musil, and Pick]{EMMP:20}
D.E. Edmunds, Z.~Mihula, V.~Musil, and L.~Pick.
\newblock Boundedness of classical operators on rearrangement-invariant spaces.
\newblock \emph{J. Funct. Anal.}, 278\penalty0 (4):\penalty0 108341, 2020.
\newblock ISSN 0022-1236.

\bibitem[Evans et~al.(2018)Evans, Gogatishvili, and Opic]{EvGoOp:18}
W.D. Evans, A.~Gogatishvili, and B.~Opic.
\newblock \emph{Weighted inequalities involving {$\rho$}-quasiconcave
  operators}.
\newblock World Scientific Publishing Co. Pte. Ltd., Hackensack, NJ, 2018.
\newblock ISBN 978-981-3239-62-3.
\newblock \doi{10.1142/10974}.

\bibitem[Fern\'{a}ndez-Mart\'{\i}nez et~al.(2010)Fern\'{a}ndez-Mart\'{\i}nez,
  Manzano, and Pustylnik]{F-MMP:10}
P.~Fern\'{a}ndez-Mart\'{\i}nez, A.~Manzano, and E.~Pustylnik.
\newblock Absolutely continuous embeddings of rearrangement-invariant spaces.
\newblock \emph{Mediterr. J. Math.}, 7\penalty0 (4):\penalty0 539--552, 2010.
\newblock ISSN 1660-5446.
\newblock \doi{10.1007/s00009-010-0039-y}.

\bibitem[Gilbarg and Trudinger(2001)]{GiTr:01}
D.~Gilbarg and N.S. Trudinger.
\newblock \emph{Elliptic partial differential equations of second order}.
\newblock Classics in Mathematics. Springer-Verlag, Berlin, 2001.
\newblock ISBN 3-540-41160-7.
\newblock Reprint of the 1998 edition.

\bibitem[Gogatishvili and Pick(2006)]{GoPi:06}
A.~Gogatishvili and L.~Pick.
\newblock Embeddings and duality theorems for weak classical {L}orentz spaces.
\newblock \emph{Canad. Math. Bull.}, 49\penalty0 (1):\penalty0 82--95, 2006.
\newblock ISSN 0008-4395.
\newblock \doi{10.4153/CMB-2006-008-3}.

\bibitem[Gogatishvili and Soudsk{\'{y}}(2014)]{GS:14}
A.~Gogatishvili and F.~Soudsk{\'{y}}.
\newblock Normability of {L}orentz spaces - an alternative approach.
\newblock \emph{Czechoslovak Math. J.}, 64\penalty0 (3):\penalty0 581--597,
  2014.
\newblock ISSN 00114642.
\newblock \doi{10.1007/s10587-014-0120-y}.

\bibitem[Gogatishvili et~al.(2006)Gogatishvili, Opic, and Pick]{GOP}
A.~Gogatishvili, B.~Opic, and L.~Pick.
\newblock Weighted inequalities for {H}ardy-type operators involving suprema.
\newblock \emph{Collect. Math.}, 57\penalty0 (3):\penalty0 227--255, 2006.
\newblock ISSN 0010-0757.

\bibitem[Gossez(1974)]{Go:74}
J.-P. Gossez.
\newblock Nonlinear elliptic boundary value problems for equations with rapidly
  (or slowly) increasing coefficients.
\newblock \emph{Trans. Amer. Math. Soc.}, 190:\penalty0 163--205, 1974.
\newblock ISSN 0002-9947.
\newblock \doi{10.2307/1996957}.

\bibitem[Hansson(1979)]{Ha}
K.~Hansson.
\newblock Imbedding theorems of {S}obolev type in potential theory.
\newblock \emph{Math. Scand.}, 45\penalty0 (1):\penalty0 77--102, 1979.
\newblock ISSN 0025-5521.
\newblock \doi{10.7146/math.scand.a-11827}.

\bibitem[Judovi\v{c}(1961)]{Yu:61}
V.I. Judovi\v{c}.
\newblock Some estimates connected with integral operators and with solutions
  of elliptic equations.
\newblock \emph{Dokl. Akad. Nauk SSSR}, 138:\penalty0 805--808, 1961.
\newblock ISSN 0002-3264.

\bibitem[Kerman and Pick(2006)]{KePi:06}
R.~Kerman and L.~Pick.
\newblock Optimal {S}obolev imbeddings.
\newblock \emph{Forum Math.}, 18\penalty0 (4):\penalty0 535--570, 2006.
\newblock ISSN 0933-7741.
\newblock \doi{10.1515/FORUM.2006.028}.

\bibitem[Kerman and Pick(2008)]{KerPi:08}
R.~Kerman and L.~Pick.
\newblock Compactness of {S}obolev imbeddings involving rearrangement-invariant
  norms.
\newblock \emph{Studia Math.}, 186\penalty0 (2):\penalty0 127--160, 2008.
\newblock ISSN 0039-3223.
\newblock \doi{10.4064/sm186-2-2}.

\bibitem[Kondrachov(1945)]{Ko:45}
W.~Kondrachov.
\newblock Sur certaines propri\'{e}t\'{e}s des fonctions dans l'espace.
\newblock \emph{C. R. (Doklady) Acad. Sci. URSS (N. S.)}, 48:\penalty0
  535--538, 1945.

\bibitem[Maz'ya(2011)]{Ma:11}
V.~Maz'ya.
\newblock \emph{Sobolev spaces with applications to elliptic partial
  differential equations}, volume 342 of \emph{Grundlehren der Mathematischen
  Wissenschaften [Fundamental Principles of Mathematical Sciences]}.
\newblock Springer, Heidelberg, augmented edition, 2011.
\newblock ISBN 978-3-642-15563-5.
\newblock \doi{10.1007/978-3-642-15564-2}.

\bibitem[Mihula(2020)]{Mih:20}
Z.~Mihula.
\newblock Embeddings of homogeneous {S}obolev spaces on the entire space.
\newblock \emph{Proc. Roy. Soc. Edinburgh Sect. A}, First view:\penalty0 1--33,
  2020.
\newblock ISSN 0308-2105.
\newblock \doi{10.1017/prm.2020.14}.

\bibitem[Musil and O{\soft{l}}hava(2019)]{MusOl:19}
V.~Musil and R.~O{\soft{l}}hava.
\newblock Interpolation theorem for {M}arcinkiewicz spaces with applications to
  {L}orentz gamma spaces.
\newblock \emph{Math. Nachr.}, 292\penalty0 (5):\penalty0 1106--1121, 2019.
\newblock ISSN 0025-584X.
\newblock \doi{10.1002/mana.201700452}.

\bibitem[Opic and Pick(1999)]{OP}
B.~Opic and L.~Pick.
\newblock On generalized {L}orentz-{Z}ygmund spaces.
\newblock \emph{Math. Inequal. Appl.}, 2\penalty0 (3):\penalty0 391--467, 1999.
\newblock ISSN 1331-4343.
\newblock \doi{10.7153/mia-02-35}.

\bibitem[Pe{\v{s}}a(2020)]{P}
D.~Pe{\v{s}}a.
\newblock Lorentz--{K}aramata spaces.
\newblock Preprint.\ arXiv:2006.14455v1 [math.FA], 2020.

\bibitem[Pick et~al.(2013)Pick, Kufner, John, and Fu{\v{c}}{\'{i}}k]{FSBook}
L.~Pick, A.~Kufner, O.~John, and S.~Fu{\v{c}}{\'{i}}k.
\newblock \emph{{Function spaces. Volume 1}}, volume~1 of \emph{De Gruyter
  Series in Nonlinear Analysis and Applications}.
\newblock Berlin: de Gruyter, 2nd edition, 2013.
\newblock \doi{10.1515/9783110250428}.

\bibitem[Poho\v{z}aev(1965)]{Po:65}
S.I. Poho\v{z}aev.
\newblock On the imbedding {S}obolev theorem for $pl=n$.
\newblock \emph{Doklady Conference Section Math. Moscow Power Inst.},
  165:\penalty0 158--170 (Russian), 1965.

\bibitem[Rellich(1930)]{Re:30}
F.~Rellich.
\newblock Ein {S}atz \"uber mittlere {K}onvergenz.
\newblock \emph{G\"ottingen Nachr.}, pages 30--35, 1930.

\bibitem[Sawyer(1990)]{S:90}
E.T. Sawyer.
\newblock Boundedness of classical operators on classical {L}orentz spaces.
\newblock \emph{Studia Mathematica}, 96\penalty0 (2):\penalty0 145--158, 1990.
\newblock ISSN 0039-3223.
\newblock \doi{10.4064/sm-96-2-145-158}.

\bibitem[Slav\'{\i}kov\'{a}(2012)]{Sla:12}
L.~Slav\'{\i}kov\'{a}.
\newblock Almost-compact embeddings.
\newblock \emph{Math. Nachr.}, 285\penalty0 (11-12):\penalty0 1500--1516, 2012.
\newblock ISSN 0025-584X.
\newblock \doi{10.1002/mana.201100286}.

\bibitem[Slav\'{\i}kov\'{a}(2015)]{Sla:15}
L.~Slav\'{\i}kov\'{a}.
\newblock Compactness of higher-order {S}obolev embeddings.
\newblock \emph{Publ. Mat.}, 59\penalty0 (2):\penalty0 373--448, 2015.
\newblock ISSN 0214-1493.
\newblock \doi{10.5565/publmat_59215_06}.

\bibitem[Stepanov(1993)]{S:93}
V.D. Stepanov.
\newblock The weighted {H}ardy's inequality for nonincreasing functions.
\newblock \emph{Trans. Amer. Math. Soc.}, 338\penalty0 (1):\penalty0 173--186,
  jul 1993.
\newblock ISSN 00029947.
\newblock \doi{10.2307/2154450}.

\bibitem[Trudinger(1967)]{Tr:67}
N.S. Trudinger.
\newblock On imbeddings into {O}rlicz spaces and some applications.
\newblock \emph{J. Math. Mech.}, 17:\penalty0 473--483, 1967.
\newblock \doi{10.1512/iumj.1968.17.17028}.

\end{thebibliography}
\end{document}